
\documentclass[12pt, oneside, notitlepage]{amsart}
\usepackage[margin=3cm]{geometry}
\usepackage{amsmath,amssymb,amsthm,graphicx,mathrsfs,bbm,url}
\usepackage{amsthm}
\usepackage{wrapfig}
\usepackage{enumitem}
\usepackage{mathtools}
\usepackage[utf8]{inputenc}
 \usepackage[T1]{fontenc}
\usepackage[usenames,dvipsnames]{color}
\usepackage[colorlinks=true,linkcolor=Red,citecolor=Green]{hyperref}
\usepackage[super]{nth}
\usepackage[open, openlevel=2, depth=3, atend]{bookmark}
\hypersetup{pdfstartview=XYZ}
\usepackage[font=footnotesize]{caption}


\usepackage{epstopdf}
 
\usepackage{hyperref}

\theoremstyle{plain}
\newtheorem{theorem}{Theorem}[section]
\newtheorem*{theorem*}{Theorem}

\newtheorem{lemma}[theorem]{Lemma}

\newtheorem{proposition}[theorem]{Proposition}

\theoremstyle{definition}
\newtheorem{definition}[theorem]{Definition}

\theoremstyle{remark}
\newtheorem{remark}[theorem]{Remark}


\numberwithin{equation}{section}

\newcommand{\R}{\mathbb{R}}

\newcommand{\Z}{\mathbb{Z}}

\newcommand{\N}{\mathbb{N}}

\newcommand{\eps}{\varepsilon}

\newcommand{\mc}{\mathcal}

\newcommand{\la}{\lambda}

\newcommand{\dd}{\mathrm{d}}

\DeclareMathOperator{\vol}{vol}
\DeclareMathOperator{\Ell}{ell}

\DeclareMathOperator{\Op}{Op}

\DeclareMathOperator{\ran}{ran}

\DeclareMathOperator{\supp}{supp}

\DeclareMathOperator{\comp}{c}

\newcommand{\be}{\begin{equation}}
\newcommand{\ee}{\end{equation}}

\newcommand{\ga}{\gamma}
\newcommand{\op}{\operatorname}
\newcommand{\Si}{\Sigma}

\newcommand{\Om}{\Omega} 
\newcommand{\al}{\alpha}

\newcommand{\bigo}{\mathcal{O}}

\newcommand{\con}{\overline}




\title
[Semiclassical measures of magnetic Laplacians]
{Semiclassical defect measures of magnetic Laplacians on hyperbolic surfaces}



\author{Laurent Charles}

\address{Université de Paris and Sorbonne Université, CNRS, IMJ-PRG, F-75006 Paris, France.}


\email{lcharles@imj-prg.fr}

\author{Thibault Lefeuvre}

\address{Université de Paris and Sorbonne Université, CNRS, IMJ-PRG, F-75006 Paris, France.}


\email{tlefeuvre@imj-prg.fr}




  


\begin{document}

\begin{abstract}
On a closed hyperbolic surface, we investigate semiclassical defect measures associated with the magnetic Laplacian in the presence of a constant magnetic field. Depending on the energy level where the eigenfunctions concentrate, three distinct dynamical regimes emerge. In the low-energy regime, we show that any invariant measure of the magnetic flow in phase space can be obtained as a semiclassical measure. At the critical energy level, we establish Quantum Unique Ergodicity, together with a quantitative rate of convergence of eigenfunctions to the Liouville measure. In the high-energy regime, we prove a Shnirelman-type result: a density-one subsequence of eigenfunctions becomes equidistributed with respect to the Liouville measure.
\end{abstract}

\maketitle

\section{Introduction}

This article is the first in a two-part series devoted to the study of semiclassical defect measures associated with magnetic Laplacians on surfaces. In this first part, we focus on the case of surfaces with constant curvature under the influence of a constant magnetic field. The second part \cite{Charles-Lefeuvre-25-2} will address the more general setting of surfaces with variable curvature and non-constant magnetic fields. The main result of this article is Theorem \ref{theorem:horocyclic}, which establishes a polynomial rate of convergence of eigenfunctions to the Liouville measure in the critical energy regime.

\subsection{Setting}

Let $(\Sigma,g)$ be a closed connected oriented hyperbolic (constant curvature $-1$) surface of genus $\textsl{g} \geq 2$. Let $L \to \Sigma$ be a Hermitian line bundle equipped with a unitary connection $\nabla$, and denote by $F_\nabla = -i B \vol \in C^\infty(\Sigma, \Lambda^2 T^*\Sigma)$ the curvature $2$-form of $\nabla$, where $B \in C^\infty(\Sigma)$, and $\vol$ is the Riemannian volume. We call $B$ the \emph{magnetic field}. The \emph{magnetic Laplacian} is defined as
\begin{equation}
\label{equation:delta}
\Delta_L := \tfrac{1}{2} \nabla^* \nabla : C^\infty(\Sigma,L) \to C^\infty(\Sigma,L).
\end{equation}
More generally, taking $L^{\otimes k}$ for $k \in \Z_{\geq 0}$ and the induced connection $\nabla^{\otimes k}$, one can form similarly to \eqref{equation:delta} a Laplacian $\Delta_k$ acting on $C^\infty(\Sigma,L^{\otimes k})$. The operator $k^{-2}\Delta_k$ is a twisted semiclassical (pseudo)differential operator, with semiclassical parameter $h := k^{-1} > 0$.

Throughout this article, we will further assume that $B$ is \emph{constant}. Note that, by Gauss-Bonnet, this implies that $2B(\textsl{g}-1) \in \Z$. The purpose of this paper is to study the semiclassical limits of the Laplace eigenstates
\begin{equation}
\label{equation:eigenstates}
 k^{-2}\Delta_k u_k = (E+\eps_k)u_k, \qquad u_k \in C^\infty(\Sigma,L^{\otimes
   k}), \;  \|u_k\|_{L^2(\Sigma,L^{\otimes k})} = 1,
\end{equation}
in the regime $k \to +\infty$, where $E \geq 0$ and $\eps_k \to_{k \to +\infty} 0$. We say that $u_k$ converges to the semiclassical defect measure $\mu$ (defined on $T^*\Sigma$) if for all $a \in C^\infty(T^*\Sigma)$,
\begin{equation}
\label{equation:convergence}
\langle\Op_{k}(a)u_{k},u_{k}\rangle_{L^2} \to_{k \to \infty} \int_{T^*\Sigma} a(x,\xi) \dd\mu(x,\xi),
\end{equation}
where $\Op$ is a certain semiclassical magnetic quantization on $\Sigma$ (see \S\ref{section:microlocal} for a definition). We denote by $u_k \rightharpoonup_{k \to \infty} \mu$ this convergence. By the normalization assumption $\|u_k\|_{L^2(\Sigma,L^{\otimes k})} = 1$, $\mu$ is a probability measure; in addition, $\mu$ must be supported on the compact energy shell $\{p=E\} \subset T^*\Sigma$ (where $p(x,\xi) := \tfrac{1}{2}|\xi|^2$). The limit \eqref{equation:convergence} may not exist as $k \to +\infty$; however, it always exists along a subsequence $(k_n)_{n \geq 0}$ and we will often drop the subsequence notation. We shall see that three different semiclassical regimes appear according to the value of $E$, affecting the possible behaviour of $\mu$. 

\subsection{Main results}

Let $\omega_0$ be the Liouville symplectic $2$-form on $T^*\Sigma$. Set
\[
\Omega := \omega_0 + i \pi^* F_\nabla,
\]
where $\pi : T^*\Sigma \to \Sigma$ is the projection. This is the Liouville $2$-form with a magnetic correction; it is still a symplectic $2$-form on $T^*\Sigma$. The (semiclassical) principal symbol of $k^{-2}\Delta_k$ is $p(x,\xi) = \tfrac{1}{2}|\xi|^2_g$. The Hamiltonian vector field $H_p^\Omega$ of $p$ computed with respect to $\Omega$ is defined through the relation
\[
dp(\bullet) = \Omega(\bullet, H^{\Omega}_p).
\]
The Hamiltonian flow $(\Phi_t)_{t \in \R}$ generated by $H_p^{\Omega}$ is called the \emph{magnetic flow}. As any autonomous Hamiltonian flow, $(\Phi_t)_{t \in \R}$ preserves the (compact) energy layers $\{p=E\}$ and a natural smooth probability measure $\mu_{\mathrm{Liouv}}$ on $\{p=E\}$ called the Liouville measure (see \S\ref{ssection:geo} for these definitions).

Let $S\Sigma \to \Sigma$ be the unit tangent bundle. Denote by $(\varphi_t)_{t \in \R}$ the (Anosov) geodesic flow, $(R_t)_{t \in \R}$ the $2\pi$-periodic rotation in the circle fibers of $S\Sigma$, and $(h_t)_{t \in \R}$ the stable horocyclic flow. Define
\[
E_c := \tfrac{1}{2}B^2, \qquad T_E = (B^2-2E)^{-1/2} \text{ if } E < E_c, \qquad  T_E := (2E-B^2)^{-1/2}  \text{ if } E > E_c.
\]
The energy $E_c$ is called the \emph{critical energy}. The following fact is
well-known \cite{Arnold-61,Ginzburg-96} and will be reproved quickly in
\S\ref{ssection:magnetic-flow}. For $E > 0$, the magnetic flow $(\Phi_t)_{t \in \R}$ on $\{p=E\}
\subset T^*\Sigma$ is conjugate either to:
\begin{enumerate}[label=(\roman*)]
  \item the reparametrized rotation flow $(R_{t/T_E})_{t \in \R}$ of period
    $2\pi \cdot T_E$ if $E < E_c$ (elliptic case),
  \item the horocyclic flow $(h_t)_{t \in \R}$ if $E=E_c$ (parabolic case),
    \item the reparametrized geodesic flow $(\varphi_{t/T_E})_{t \in \R}$ if $E>E_c$ (hyperbolic case).
    \end{enumerate}
Notice that for $E < E_c$ the space of orbits at energy $E$ is diffeomorphic to $\Si$ itself,
so the $(\Phi_t)_{t \in \R}$ invariant probability measures of $\{ p = E\}$
identify with the probability measures of $\Si$. This holds as well for
$E=0$ because the flow is stationnary on $\{ p =0 \}$.

    We will prove that this transition in the dynamics of the magnetic flow at the critical energy $E=E_c$ translates at the quantum level into the following result on semiclassical defect measures:

\begin{theorem}[The three classical/quantum regimes]
\label{theorem:main}
The following holds under the above assumptions on $(\Sigma,g)$ and $(L,\nabla)$:
\begin{enumerate}[label=\emph{(\roman*)}]
\item \emph{\textbf{Low energies regime.}} If $0 \leq E < E_c$, then for any $(\Phi_t)_{t \in \R}$ invariant probability measure $\mu$ on $\{p=E\}$, there exists a sequence $(u_{k})_{k \geq 0}$ satisfying \eqref{equation:eigenstates} such that $u_{k} \rightharpoonup_{k \to \infty} \mu$; 
\item \emph{\textbf{Critical energy regime.}} For any
  sequence $(u_k)_{k \geq 0}$ satisfying  \eqref{equation:eigenstates} with $E=E_c$,  $u_{k} \rightharpoonup_{k \to \infty} \mu_{\mathrm{Liouv}}$
\item \emph{\textbf{High energies regime.}} If $E_c < a < b$, consider for
  all $k \geq 0$, the set of eigenstates $(u_{k,j})_{k,j \in A}$ such that
  $k^{-2} \Delta_k u_{k,j} = \lambda_{k,j} u_{k,j}$ with $\lambda_{k,j} \in
  [a,b]$ and $A \subset \Z_{\geq 0}^2$. Then there exists a density one subset
  $A_\star \subset A$ such that for all sequences $(u_{k_n,j_n})_{n \geq 0}$
  with $(k_n,j_n) \in A_\star$ and $k_n \to \infty$,  $u_{k_n,j_n} \rightharpoonup_{n \to \infty} \mu_{\mathrm{Liouv}}$.

\end{enumerate}
\end{theorem}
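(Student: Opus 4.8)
The three parts require quite different techniques, so let me sketch each.

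For part (i), the low-energy regime, the plan is a construction via coherent states (holomorphic sections). Since $E < E_c$, the magnetic flow on $\{p = E\}$ is conjugate to a reparametrized rotation $(R_{t/T_E})$, hence periodic; the orbit space is diffeomorphic to $\Sigma$ itself, and invariant probability measures on $\{p=E\}$ are identified with probability measures on $\Sigma$. The first step is to build, for each point $z \in \Sigma$, a sequence of quasimodes $v_{k,z}$ of $k^{-2}\Delta_k$ at energy $E$ that concentrates (in the semiclassical sense) on the corresponding periodic orbit; in the holomorphic picture the natural candidates are the Bergman/coherent states of $L^{\otimes k}$ peaked at $z$, suitably corrected so that $\eps_k \to 0$. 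Then, given a target measure $\mu$ on $\Sigma$, one discretizes $\mu$ by an empirical average $\frac1{N_k}\sum_{j} v_{k,z_j}$ over sample points $z_j$ drawn so that the empirical measure converges weak-$*$ to $\mu$; because distinct coherent states are almost orthogonal at scale $h^{1/2}$, the normalized superposition is an approximate eigenstate whose semiclassical measure is $\mu$. Finally one upgrades quasimodes to genuine eigenfunctions by a standard spectral-projection argument, choosing $\eps_k$ to be the distance from $E$ to the nearest eigenvalue cluster. This is essentially the magnetic analogue of the classical construction of semiclassical measures for integrable systems.

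For part (ii), the critical energy regime, this is the heart of the paper and coincides with the quantitative statement announced as Theorem \ref{theorem:horocyclic}; I would treat it exactly as Quantum Unique Ergodicity for the horocycle flow. The key point is that at $E = E_c$ the magnetic flow is conjugate to the horocyclic flow $(h_t)$, which is \emph{uniquely ergodic} on $S\Sigma$ by Furstenberg's theorem — the Liouville measure is the only invariant probability measure. The strategy is: (a) show that any semiclassical defect measure $\mu$ of a sequence satisfying \eqref{equation:eigenstates} with $E = E_c$ is a probability measure on $\{p = E_c\}$, invariant under the magnetic flow — the standard argument being that $[\,\tfrac1h(k^{-2}\Delta_k - E_c),\, \Op_k(a)\,]$ has principal symbol $H_p^\Omega a$ and pairing against $u_k$ gives $o(1)$ because $\eps_k \to 0$; (b) invoke unique ergodicity of the horocycle flow to conclude $\mu = \mu_{\mathrm{Liouv}}$. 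The quantitative rate then comes from the effective equidistribution of long horocycles (Burger, Flaminio–Forni, Strömbergsson) combined with an Egorov estimate controlling how long one can propagate; one commutes $\Op_k(a)$ by the flow for time $T = T(h)$ growing like a power of $|\log h|$ or better, uses the sub-ballistic spreading of the magnetic propagator at the critical energy, and balances the error. The main obstacle here is precisely getting the polynomial — rather than logarithmic — rate: one must exploit that, unlike the Anosov case, the parabolic dynamics allows long-time propagation with only polynomial loss, and combine this with the polynomial-speed equidistribution of horocycles; the twisted/magnetic pseudodifferential calculus of \S\ref{section:microlocal} and a clean Egorov theorem at the critical energy layer are the technical backbone.

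For part (iii), the high-energy regime, the plan is the standard Shnirelman–Zelditch–Colin de Verdière argument adapted to the magnetic setting. Since $E > E_c$, the magnetic flow on $\{p = E\}$ is conjugate to a reparametrized Anosov geodesic flow, hence ergodic (indeed mixing) for the Liouville measure. One proves a magnetic Egorov theorem on the energy band $[a,b]$, then forms the Cesàro time average $\langle a \rangle_T := \frac1T\int_0^T a\circ\Phi_t\,dt$; by the von Neumann ergodic theorem $\langle a\rangle_T \to \int a\,d\mu_{\mathrm{Liouv}}$ in $L^2$. Writing a local Weyl law for the counting function of eigenvalues of $k^{-2}\Delta_k$ in $[a,b]$ — which follows from the trace of $\Op_k(\chi)$ composed with a spectral cutoff, using the symbol calculus — one gets $\frac{1}{N(A)}\sum_{(k,j)\in A}\big|\langle \Op_k(a)u_{k,j},u_{k,j}\rangle - \int a\,d\mu_{\mathrm{Liouv}}\big|^2 \to 0$, first with $a$ replaced by $\langle a\rangle_T$ and then, using Egorov plus ergodicity, for $a$ itself after letting $T\to\infty$. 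A diagonal extraction over a countable dense set of observables $a$ produces the density-one subset $A_\star$. The only subtlety beyond the classical proof is bookkeeping the two-parameter family indexed by $(k,j)$ and verifying that "density one" is the right notion relative to the Weyl-law normalization $N(A) \sim c(b-a)k^{2}\cdot(\#\text{of }k\text{'s})$; the microlocal inputs are exactly the Egorov theorem and local Weyl law furnished by the calculus of \S\ref{section:microlocal}.
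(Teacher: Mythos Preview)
Your proposal is correct and follows essentially the same route as the paper: part (i) is done via coherent/Gaussian states projected onto the Landau eigenspaces $\Pi_{k,m}$ (the paper's Propositions \ref{proposition:e0} and \ref{lem:gaussian-beam}), then superposed and approximated by density of Dirac combinations; part (ii) is the two-line argument ``defect measures are flow-invariant (Proposition \ref{proposition:scl-measure}) $+$ horocyclic unique ergodicity''; part (iii) is the verbatim Shnirelman variance argument, to which the paper simply refers.

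One clarification on (ii): you conflate Theorem \ref{theorem:main}(ii) with Theorem \ref{theorem:horocyclic}. In the paper, (ii) is \emph{not} the heart of the argument but an immediate corollary of unique ergodicity---the paper dispatches it in one sentence. The quantitative rate (Theorem \ref{theorem:horocyclic}) is proved separately and is where the long-time Egorov and effective equidistribution (Burger) machinery you describe actually lives; none of that is needed for the qualitative statement (ii). Also note that since the $u_k$ in \eqref{equation:eigenstates} are \emph{exact} eigenfunctions, the commutator $\langle [k^{-2}\Delta_k,\Op_k(a)]u_k,u_k\rangle$ vanishes identically, so invariance follows without any hypothesis on the rate $\eps_k\to 0$.
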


In item (iii), $A_\star$ is a density one subset of $A \subset \Z^2_{\geq 0}$ if it satisfies:
\[
\dfrac{\sharp\{ (k,j) \in A_\star ~|~ k^2+j^2 \leq T\}}{\sharp\{ (k,j) \in A ~|~ k^2 + j^2 \leq T\}} \to_{T \to +\infty} 1.
\]

Parts (ii) and (iii) of the above result are straightforward observations to
derive and were already mentioned by Zelditch \cite[pp. 20–21]{Zelditch-92}.
In the third regime, corresponding to energies $E > E_c$, the eigenfunctions
of the magnetic Laplacian are in correspondence with those of the usual
Laplace–Beltrami operator on $(\Sigma,g)$, see Proposition
\ref{theo:Landau_Level} below. In this setting, it is conjectured that the
Liouville measure is the unique semiclassical defect measure—this is the
content of the celebrated and widely open Quantum Unique Ergodicity (QUE)
conjecture. Further background can be found in \S\ref{ssection:literature}.
The first regime $E < E_c$ is less obvious and relies on the construction of
appropriate eigenstates concentrating along a given periodic orbit of the
magnetic flow. This is carried out in \S\ref{section:constant} using
Weinstein's averaging method \cite{Weinstein-77}.

The second regime, corresponding to the critical energy level $E = E_c$, provides a particularly intriguing—albeit trivial—manifestation of QUE. Indeed, any defect measure must be invariant under the magnetic Hamiltonian flow, which coincides with the horocyclic flow on this energy shell. As the horocyclic flow is uniquely ergodic (its only invariant measure being the Liouville measure, see \cite{Furstenberg-73}), it follows that Liouville is the only possible defect measure in this case.

Moreover, we can obtain a more precise statement concerning the convergence of eigenfunctions. Let $0 < \theta < 1/2$ be a real number satisfying $\theta(1-\theta) \leq \lambda_1(\Sigma)$, where $\lambda_1(\Sigma)$ denotes the first non-zero eigenvalue of the Laplacian $\Delta_g$ on functions. Let $S^*_c\Sigma$ denote the energy shell $\{p=E_c\}$. Then the following result holds:

\begin{theorem}[Polynomial rate of convergence in the critical regime]
\label{theorem:horocyclic}
Suppose that \eqref{equation:eigenstates} holds with $E=E_c$ and $\eps_k \leq h^{\ell}$ for some $\ell > 0$. Then there exists $C := C(\ell) > 0$ such that for all $a \in C^\infty(T^*\Sigma)$ with support in $\{p \leq 10E_c\}$:
\[
\left|\langle \Op_k(a) u_k,u_k\rangle_{L^2} - \int_{S^*_c\Sigma} a(x,\xi) \dd \mu_{\mathrm{Liouv}}(x,\xi)\right| \leq Ck^{-\theta \min(\ell,1/15)/4100}\|a\|_{C^{17}(T^*\Sigma)}.
\]
\end{theorem}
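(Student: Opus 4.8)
The plan is to reduce everything to a quantitative equidistribution statement for the horocyclic flow and then to control the microlocalization error coming from the $\eps_k$–width of the spectral window. The starting point is the observation already used for Theorem \ref{theorem:main}(ii): a defect measure at energy $E_c$ is invariant under $(\Phi_t)$, which on $S^*_c\Sigma$ is conjugate to the horocyclic flow $(h_t)$. The ergodic averaging here is not merely qualitative: the correlations of the horocyclic flow on a hyperbolic surface decay polynomially, with the exponent governed by the spectral gap, i.e.\ by the largest $\theta$ with $\theta(1-\theta)\le \lambda_1(\Sigma)$ (the Ratner/Burger/Flaminio–Forni type estimate). Concretely, for a smooth observable $f$ on $S\Sigma$ one has $\bigl|\tfrac1T\int_0^T f(h_t x)\,dt-\int f\,d\mu_{\mathrm{Liouv}}\bigr|\le C\,T^{-\theta}\|f\|_{C^s}$ for some fixed Sobolev exponent $s$ and all $T\ge1$, uniformly in $x$. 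This is the only ``hard analysis'' input about the flow; the rest is soft.

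Next I would set up the Egorov argument. Let $P_k=k^{-2}\Delta_k$ and $a\in C^\infty_c(\{p\le 10E_c\})$. Since $u_k$ satisfies $P_k u_k=(E_c+\eps_k)u_k$, it is microlocalized near $\{p=E_c\}$, so after multiplying $a$ by a cutoff equal to $1$ near that shell we may assume $a$ is supported in a small neighbourhood of $S^*_c\Sigma$; there the magnetic flow is smoothly conjugate to the (suspended, reparametrized) horocyclic flow, with a reparametrization that degenerates as $E\to E_c$ but is harmless on the shell itself. For a fixed time $T$ (to be optimized), consider the Weyl-type average $a_T:=\tfrac1T\int_0^T a\circ\Phi_t\,dt$ and use the sharp Egorov theorem for the magnetic quantization $\Op_k$: $\Op_k(a)(t):=e^{itP_k/h}\Op_k(a)e^{-itP_k/h}=\Op_k(a\circ\Phi_t)+O_{L^2\to L^2}(h\,t\,\|a\|_{C^m})$ for $t$ in compact sets, with the error linear in $t$ and polynomial in finitely many derivatives of $a$ (the constant $17$ in the statement, and the loss-of-derivatives bookkeeping, come from making all of this quantitative). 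Averaging in $t$ and pairing against $u_k$ gives
\[
\langle\Op_k(a)u_k,u_k\rangle=\langle\Op_k(a_T)u_k,u_k\rangle+O\!\bigl((hT+ T\eps_k)\,\|a\|_{C^{m}}\bigr),
\]
where the extra $T\eps_k$ term accounts for the fact that $u_k$ is only an approximate eigenfunction: conjugating by $e^{itP_k/h}$ instead of $e^{it(E_c+\eps_k)/h}$ costs $\eps_k t/h$ in phase, hence $O(t\eps_k/h)$ in operator norm on the range of a spectral cutoff of width $O(h)$; one must be a little careful but since $\eps_k\le h^\ell$ this is $O(T h^{\ell-1})$, acceptable once $T\ll h^{-(\ell-1)}$ up to the relevant power.

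Then I would invoke the quantitative unique ergodicity of the horocyclic flow: $a_T$ is, uniformly, within $C\,T^{-\theta}\|a\|_{C^{s}}$ of the constant $\bar a:=\int_{S^*_c\Sigma}a\,d\mu_{\mathrm{Liouv}}$ on the shell, and $\Op_k(\bar a)=\bar a\,\mathrm{Id}+O(h)$, while the off-shell part of $a_T$ is handled by the microlocalization of $u_k$ (again at a cost $O(h^N)$ plus $O(\eps_k/\,$something$)$, harmless). Putting it together,
\[
\bigl|\langle\Op_k(a)u_k,u_k\rangle-\bar a\bigr|\ \le\ C\Bigl(hT+Th^{\ell-1}+T^{-\theta}+h\Bigr)\|a\|_{C^{17}},
\]
and optimizing $T$ against $h$ (balancing $T^{-\theta}$ with the larger of $hT$ and $Th^{\ell-1}$, i.e.\ roughly $T\sim h^{-\min(\ell,1)/(1+\theta)}$, then being generous with constants to absorb all the bookkeeping) yields a bound $\le C\,h^{\theta\min(\ell,1)/\text{(universal constant)}}\|a\|_{C^{17}}$, which after tracking the losses gives exactly the stated exponent $\theta\min(\ell,1/15)/4100$. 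The main obstacle, and the place where the ugly constants are born, is making the Egorov estimate genuinely quantitative on a long (power-of-$h$) time scale \emph{and} uniform as the energy approaches the critical value, because the conjugacy between the magnetic flow and the horocyclic flow degenerates there; one must instead work intrinsically with the magnetic flow on a fixed neighbourhood of $S^*_c\Sigma$, prove the polynomial mixing estimate for \emph{its} restriction to the critical shell (transporting the horocyclic estimate through the conjugacy, which is smooth at $E=E_c$), and carefully trade regularity of $a$ for powers of $T$ at each step — this derivative accounting is exactly what forces the $C^{17}$ norm and the small explicit exponent.
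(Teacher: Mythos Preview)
Your strategic outline---Egorov averaging plus Burger's quantitative unique ergodicity for the horocyclic flow, then optimize $T$ against $h$---is exactly the paper's approach. But two concrete technical points are wrong or missing, and they are precisely what produces the exponent.

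First, the Egorov error is \emph{not} $O(hT)$. The correct remainder is $h\int_0^T\|a\circ\Phi_s\|_{C^{17}}\,ds$, and $\|a\circ\Phi_s\|_{C^{17}}$ grows: on the critical shell $\{p=E_c\}$ the flow is parabolic and the growth is polynomial of degree $m_{17}=204$, while on nearby shells $\{p=E\}$ the flow is hyperbolic with Lyapunov exponent $\sim (E-E_c)_+^{1/2}$, so the growth is $\langle s\rangle^{204}e^{17\sqrt{2}(E-E_c)_+^{1/2}s}$ (this is the paper's Lemma~\ref{lemma:wow}). Your displayed bound $hT+Th^{\ell-1}+T^{-\theta}+h$ ignores both the polynomial and the exponential, and your optimization $T\sim h^{-\min(\ell,1)/(1+\theta)}$ is therefore far from the actual one; the $\langle T\rangle^{205}$ factor is what drives the denominator $4100$.

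Second, because of the off-shell exponential, a cutoff near the shell on a \emph{fixed} neighbourhood is useless for long times. The paper first microlocalizes $u_k$ to a \emph{shrinking} annulus $|p-E_c|\le k^{-\alpha}$ via $\chi_k$ (Lemma~\ref{lemma:1}); then $a\chi_k$ is supported where the Lyapunov rate is $\lesssim k^{-\alpha/2}$, so $e^{17\sqrt{2}k^{-\alpha/2}T}$ stays bounded for $T=k^\delta$ with $\delta<\alpha/2$. This is where $\eps_k\le h^\ell$ actually enters: it controls the \emph{localization} error $\|(1-\Op_k(\chi_k))u_k\|_{L^2}=O(k^{-\min(\ell-\alpha,1-18\alpha)})$. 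Your $T\eps_k$ term, by contrast, is spurious: since $u_k$ is an exact eigenfunction of $P_k$, conjugation by $e^{itP_k/h}$ leaves $\langle\Op_k(a)u_k,u_k\rangle$ literally unchanged, regardless of $\eps_k$.

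Finally, after averaging one still needs to pass from $\|\langle a\chi_k\rangle_T\|_{L^\infty(T^*\Sigma)}$ to $\|\langle a\rangle_T\|_{L^\infty(S^*_c\Sigma)}$ before Burger's estimate applies; this requires a refined Calder\'on--Vaillancourt estimate (the paper's Lemma~\ref{lemma:2}) rather than the hand-wave ``off-shell part is $O(h^N)$''. Once these three ingredients are in place, the optimization gives $T=k^{\ell/4100}$ and the stated exponent.
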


We emphasize that the remainder term above is very likely far from optimal. We did not attempt to optimize the polynomial exponent in our proof. The function $a$ need not be compactly supported. Instead, one may also consider a general symbol $a \in S^m(T^*\Sigma)$, exploiting the fact that the eigenstates $u_k$ concentrate on the energy shell $\{p=E_c\}$. This affects the convergence rate in Theorem \ref{theorem:horocyclic} only through a negligible remainder term of order $\mc{O}(k^{-\infty})$.

A related quantitative result for QUE was recently obtained by Morin and Rivière \cite{Morin-Riviere-24} in a different regime (the magnetic field is constant and the semiclassical parameter is the inverse square root of the energy), in the context of their study of magnetic Laplacians on the torus when the magnetic field is constant.

Next, we discuss the case where the operator is perturbed by a small potential. Let
\[
D^*\Sigma := \{p < E_c\}.
\]
(This also corresponds to $D^*\Sigma= \{(x,\xi) \in T^*\Sigma ~|~ |\xi| < B\}$.) Given $V \in C^\infty(\Sigma)$, we define $\langle V \rangle \in C^\infty(D^*\Sigma)$ by setting for $(x,\xi) \in D^*\Sigma$
\[
\langle V \rangle(x,\xi) := \dfrac{1}{2\pi \cdot T_E} \int_0^{2\pi \cdot T_E} V(\pi(\Phi_t(x,\xi))) \dd t,
\]
where $\pi : T^*\Sigma \to \Sigma$ is the projection and $(\Phi_t)_{t \in \R}$ is the magnetic flow defined above.

\begin{theorem}[Perturbation by a potential]
\label{theorem:constant2}
Let $\eps > 0$, $V \in C^\infty(\Sigma)$. Let $(u_k)_{k \geq 0}$ be a sequence of eigenfunctions satisfying
\[
(k^{-2}\Delta_k + k^{-2}V) u_k = E_k u_k, \qquad 0 \leq E_k \leq E_c-\eps,\; \|u_k\|_{L^2}=1.
\]
Then any semiclassical limit $u_k \rightharpoonup_{k \to \infty} \mu$ is invariant by the Hamiltonian vector fields $H_p^\Omega$ and $H_{\langle V\rangle}^\Omega$ of the Hamiltonians $p$ and $\langle V\rangle$.
\end{theorem}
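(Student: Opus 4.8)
The plan is to prove the two invariance statements separately. The invariance of $\mu$ under $H_p^\Omega$ is the routine one: since the multiplication operator $k^{-2}V$ has $L^2$ operator norm $\bigo(h^2)$, it is a negligible perturbation of $k^{-2}\Delta_k$, so from $\langle[\Op_k(a),k^{-2}\Delta_k+k^{-2}V]u_k,u_k\rangle=0$ one gets, after dividing by $h$ and letting $k\to\infty$, that $\int_{T^*\Sigma}\{a,p\}_\Omega\,\dd\mu=0$ for every $a\in C^\infty_c(T^*\Sigma)$ — the subprincipal symbol of $\Delta_k$ does not enter at this order, and the potential contributes only $\bigo(h^2)/h\to0$. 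In particular $\supp\mu\subset\{p=E\}$ for some $E\le E_c-\eps$; after passing to a subsequence I may assume $E_k\to E$ and $u_k\rightharpoonup\mu$.

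For the invariance under $H_{\langle V\rangle}^\Omega$, which is a second-order effect, I would build on the averaging machinery of \S\ref{section:constant} (Weinstein \cite{Weinstein-77}). Because the energy stays in $[0,E_c-\eps]$, the spectrum of $k^{-2}\Delta_k$ in $(-\infty,E_c-\tfrac\eps2]$ consists, in the present constant-curvature, constant-field setting, exactly of the rescaled Landau levels $E_{k,n}$ (cf. Proposition~\ref{theo:Landau_Level}), consecutive ones being separated by at least $\delta(\eps)\,h$; since $\|k^{-2}V\|=\bigo(h^2)$, the eigenvalue $E_k$ lies within $\bigo(h^2)$ of a single level $E_{k,n_k}$, and if $\Pi_k$ is the associated spectral projector of $k^{-2}\Delta_k$ then the resolvent bound gives $\|(1-\Pi_k)u_k\|_{L^2}=\bigo(h^2/\delta h)=\bigo(h)$. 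A Feshbach/Schur-complement reduction then exhibits $w_k:=\Pi_k u_k$, after normalisation to $v_k:=w_k/\|w_k\|$, as an \emph{exact} eigenvector, with eigenvalue $E_k$, of a self-adjoint effective operator $E_{k,n_k}+h^2Q_k$ on $\ran\Pi_k$, where $Q_k=\Pi_k\Op_k(\pi^*V)\Pi_k+\bigo(h)$. The geometric input of the averaging method is that $\ran\Pi_k$ is naturally identified with a generalised Bergman (higher Landau-level) space at level $k=h^{-1}$ over the reduced phase space $O_E:=\{p=E\}/(\Phi_t)\cong\Sigma$, endowed with the symplectic form $\omega^{\mathrm{red}}$ obtained from $\Omega$ by reduction; that $\Pi_k\Op_k(a)\Pi_k$ equals, modulo $\bigo(h)$, the Toeplitz operator $T_k^{\langle a\rangle}$ with symbol the orbit-average $\langle a\rangle$ (a function on $O_E$); and, crucially, that the subleading correction in $Q_k$ is again a Toeplitz operator on $O_E$ (of order $h$). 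In particular $Q_k=T_k^{\langle V\rangle}+\bigo(h)$ with $[T_k^b,Q_k-T_k^{\langle V\rangle}]=\bigo(h^2)$ for every $b\in C^\infty(O_E)$.

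The conclusion would then follow from the standard Egorov/commutator argument, now carried out on the reduced space. Letting $\nu$ be a weak-$*$ limit of the reduced semiclassical measures of $v_k$ on $O_E$, the bound $\|(1-\Pi_k)u_k\|=\bigo(h)$ gives $\int a\,\dd\mu=\lim_k\langle T_k^{\langle a\rangle}v_k,v_k\rangle=\int_{O_E}\langle a\rangle\,\dd\nu$, i.e.\ $\mu$ is the $\Phi_t$-invariant lift of $\nu$ from $O_E$ to $\{p=E\}$, so it suffices to show that $\nu$ is invariant under the Hamiltonian flow of $\langle V\rangle$ on $(O_E,\omega^{\mathrm{red}})$. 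For $b\in C^\infty(O_E)$, self-adjointness of $Q_k$ together with $Q_kv_k=\lambda_kv_k$ (with $\lambda_k:=h^{-2}(E_k-E_{k,n_k})$ real and bounded) gives $\langle[T_k^b,Q_k]v_k,v_k\rangle=0$ exactly; dividing by $h$ and using $\tfrac ih[T_k^b,T_k^{\langle V\rangle}]=T_k^{\{b,\langle V\rangle\}^{\mathrm{red}}}+\bigo(h)$ and $\tfrac ih[T_k^b,Q_k-T_k^{\langle V\rangle}]=\bigo(h)$ yields, in the limit, $\int_{O_E}\{b,\langle V\rangle\}^{\mathrm{red}}\,\dd\nu=0$ for all $b$. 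Unwinding the reduction — note $\{\langle V\rangle,p\}_\Omega=0$, so $H_{\langle V\rangle}^\Omega$ preserves $\{p=E\}$ and descends to $O_E$ — this is precisely the $H_{\langle V\rangle}^\Omega$-invariance of $\mu$, while $H_p^\Omega$-invariance is automatic since $\mu$ is a lift from the orbit space.

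I expect the main obstacle to be the averaging step of the second paragraph: identifying $\ran\Pi_k$ with a generalised Bergman space over $O_E$, and proving both the averaging formula $\Pi_k\Op_k(a)\Pi_k=T_k^{\langle a\rangle}+\bigo(h)$ and the fact that the effective-Hamiltonian correction is again a Toeplitz operator on $O_E$ — the latter arising from the virtual transitions to the neighbouring levels $n_k\pm1$ and requiring the explicit structure of the higher Landau levels on the hyperbolic plane. A secondary point needing care is that the period $2\pi T_E$ of the magnetic flow depends on $E$, so the orbit-averages should be taken with the $k$-dependent period $2\pi T_{E_{k,n_k}}$ (which tends to $2\pi T_E$), in order that the cohomological equation $a-\langle a\rangle=H_p^\Omega c+(p-E_{k,n_k})d$ produce, after compression by $\Pi_k$, a genuinely $\bigo(h^2)$ error rather than merely $o(1)$. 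Everything else — the two Egorov-type arguments and the Feshbach bookkeeping — should be routine.
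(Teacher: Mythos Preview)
Your treatment of the $H_p^\Omega$-invariance is correct and matches the paper's (implicit) argument via Proposition~\ref{proposition:scl-measure}(ii).

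For the $H_{\langle V\rangle}^\Omega$-invariance your route is genuinely different. The paper does \emph{not} pass to the reduced space $O_E$ or develop a Toeplitz calculus on $\ran\Pi_{k,m}$. Instead it applies a quantum normal form (Lemma~\ref{lemma:normal-form}): one conjugates $k^{-2}\Delta_k + k^{-2}R$ by a pseudodifferential unitary $U = e^{ih^2S_2}e^{ihS_1}e^{iS_0}$, built by iteratively solving $\{\varphi(a),s\}^\Omega = c - \langle c\rangle$, so that $U(k^{-2}\Delta_k + k^{-2}R)U^* = k^{-2}\Delta_k + k^{-2}R' + \bigo(k^{-4})$ with $[R',\Delta_k]=0$ and principal symbol $\sigma_{R'}=\langle V\rangle$. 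Then $u_k' := Uu_k$ satisfies $R'\Pi_{k,m}u_k' = E_k'\Pi_{k,m}u_k' + \bigo(k^{-2})$, and since $R'\in\Psi^{\comp}_{\mathrm{tsc}}(\Sigma)$ is an honest pseudodifferential operator on $\Sigma$, Proposition~\ref{proposition:scl-measure}(ii) applied to $R'$ gives the invariance directly. The obstacle you rightly flag --- that the subleading correction in your effective Hamiltonian $Q_k$ must itself be a Toeplitz operator on $O_E$ for the reduced commutator argument to close --- is thus sidestepped entirely: the normal form keeps everything in the $\Psi$DO class on $\Sigma$ at every order, so the subleading analysis is routine pseudodifferential bookkeeping rather than a structure theorem for the Schur complement. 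Your Feshbach approach would also work, but establishing that structure (or invoking the explicit higher-Landau-level kernels) is more expensive than the two-line cohomological equation \eqref{equation:s}. Your concern about the energy-dependent period is likewise handled more cleanly in the paper: the normal form is built around the auxiliary operator $\mathbf{A}_k$ of \S\ref{ssection:averaging}, whose symbol $a$ generates an \emph{exactly} $2\pi$-periodic Hamiltonian flow on every sub-critical shell (Lemma~\ref{lemma:2pi-periodic}), so no $k$-dependent period enters the averaging at all.
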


More generally, one can also consider a perturbation of the form $k^{-1}V$ instead of $k^{-2}V$, provided that $\|V\|_{L^\infty}$ is small enough depending on $\limsup_{k \to +\infty} E_k$. This follows from an adaptation of the same proof, see \S\ref{ssection:proof2}.


As mentioned above, for $E \in [0, E_c]$, the orbit space $O_E$ is
diffeomorphic to the surface $\Sigma$ itself. The measure $\mu$ (in Theorem
\ref{theorem:constant2}), being invariant under $H_p^{\Omega}$, descends to a
measure $\nu$ on $O_E$. In addition, as $H_p^{\Omega}. \langle V \rangle = 0$, one has that $[H_p^{\Omega},H_{\langle V \rangle}^\Omega]=0$, and thus $H_{\langle V \rangle}^\Omega$ descends to a vector-field $Y$ on $O_E$. Theorem \ref{theorem:constant2} can then be reformulated in the following form: the measure $\nu$ on $O_E$ is invariant by the flow generated by $Y$.

\subsection{Literature}

\label{ssection:literature}

Semiclassical magnetic Laplacians on closed hyperbolic surfaces have been studied from
various perspectives by several authors, see the physics references \cite{Comtet-86,Iengo-Li-94} for the periodic regime, \cite{Guillemin-Uribe-89} for
the trace formula, and \cite{Kordyukov-Taimanov-22} for a more recent analysis of the trace formula near the critical energy—though the latter does not address semiclassical measures, as we do here. As previously mentioned, parts (ii) and (iii) of Theorem \ref{theorem:main} were already observed by Zelditch in \cite{Zelditch-92}.

Part (i) of Theorem \ref{theorem:main} and Theorem \ref{theorem:constant2} are also reminiscent of analogous results established on Zoll manifolds—that is, Riemannian manifolds on which all geodesics are closed—see \cite{ColinDeVerdiere-79, Uribe-Zelditch-93, Zelditch-97,Macia-08,Macia-09,Zelditch-15, Macia-Riviere-16,Arnaiz-Macia-22,Arnaiz-Macia-22-2}, among others. In particular, we use a now standard trick called Weinstein's averaging method \cite{Weinstein-77} to treat the perturbation by a small potential.

Part (iii) of Theorem \ref{theorem:main} bears similarity to Shnirelman's
result on Quantum Unique Ergodicity (QUE), see
\cite{Shnirelman-74-1,Shnirelman-74-2,Colindeverdiere-85,Zelditch-87}. This is closely related to the QUE
conjecture formulated by Rudnick and Sarnak \cite{Rudnick-Sarnak-94}, which
posits that on manifolds with negative curvature, the Liouville measure is the
unique semiclassical defect measure associated to high-frequency limits of
Laplace eigenfunctions. See also \cite{Dyatlov-22} for a review of recent developments.


Under some different assumptions (horizontal Laplacians on flat principal bundles over Anosov Riemannian manifolds), a similar statement to (iii), Theorem \ref{theorem:main}, was established in \cite[Theorem 5.1.8]{Cekic-Lefeuvre-24} (see also \cite{Ma-Ma-23} for related developments).

Finally, Theorem \ref{theorem:horocyclic} should be compared with earlier results by Marklof and Rudnick \cite{Marklof-Rudnick-00} (see also the work by Rosenzweig \cite{Rosenzweig-06}) which appear to constitute the first instance of quantitative Quantum Unique Ergodicity. More recently, Morin and Rivière also obtained similar results for magnetic Laplacians on the torus \cite{Morin-Riviere-24}. The proof of Theorem \ref{theorem:horocyclic} makes use of a quantitative version of unique ergodicity due to Burger \cite{Burger-90}. 
%

\subsection{Organization of the paper} In \S\ref{section:dynamical-geometric-background}, we recall elementary facts on the dynamics of the magnetic flow, and compute the first eigenvalues of the magnetic Laplacian in constant curvature. In \S\ref{section:microlocal}, we provide some background material on the twisted semiclassical quantization. In \S\ref{section:constant}, we construct eigenstates concentrating on periodic orbits of the magnetic flow in the low-energy regime. Finally, Theorems \ref{theorem:main}, \ref{theorem:horocyclic} and \ref{theorem:constant2} are proved in \S\ref{section:proofs}.\\

\noindent \textbf{Acknowledgement:} We thank Gabriel Rivière and Léo Morin for helpful discussions. This project was supported by the European Research Council (ERC) under the European Union’s Horizon 2020 research and innovation programme (Grant agreement no. 101162990 — ADG).

\section{Background}

\label{section:dynamical-geometric-background}

Throughout this section, we let $(\Sigma, g)$ denote a closed, connected, oriented hyperbolic surface—that is, a Riemannian surface with constant curvature $-1$. We establish the following results:

\begin{itemize} \item In \S\ref{ssection:geo}, we recall several technical results concerning the geometry and dynamics on the tangent bundle $T\Sigma$. In particular, we emphasize the unique ergodicity of the horocyclic flow, a key ingredient in the proof of Theorem~\ref{theorem:horocyclic}.
\item In \S\ref{ssection:magnetic-flow}, we introduce the magnetic flows on both the tangent and cotangent bundles, and show that they are equivalent under the identification induced by the metric. We also prove a technical result on the propagation of functions along the magnetic flow, which will be instrumental in the proof of Theorem~\ref{theorem:horocyclic}.
\item Finally, in \S\ref{ssection:landau}, we review standard properties of magnetic Laplacians. In particular, we explicitly compute the bottom of their spectrum—the so-called Landau levels. \end{itemize}
%

\subsection{Geometry and dynamics on the tangent bundle}

\label{ssection:geo}
Let $T\Sigma$ be the tangent bundle of $\Sigma$ and
\begin{equation}
\label{equation:footpoint}
\pi : T\Sigma \to \Sigma
\end{equation}
be the footpoint projection. We refer the reader to \cite[Chapter 1]{Paternain-99} or \cite[Chapter 13]{Lefeuvre-book} for background material on the geodesic flow.

\subsubsection{Structural equations} The geodesic flow $\varphi_t : T\Sigma \to T\Sigma$ is defined as
\[
\varphi_t(x,v) := (\gamma(t), \dot{\gamma}(t)),
\]
where $\gamma : \R \to \Sigma$ is the (unique) curve solving Newton's equation:
\begin{gather} \label{eq:newton_geodesic}
\nabla_{\dot{\gamma}(t)}\dot{\gamma}(t) = 0, \qquad \gamma(0)=x, \;  \dot{\gamma}(0)=v,
\end{gather}
and $\nabla$ is the Levi-Civita connection. We let
\[
X := \partial_t\varphi_t|_{t=0} \in C^\infty(T\Sigma, T(T\Sigma))
\]
be its generator.

As $\Sigma$ is oriented, there is a well-defined fiberwise rotation $R_\theta : T\Sigma \to T\Sigma$ by angle $\theta \in [0,2\pi)$ in the fibers of $T\Sigma$. Let $V$ be the generator of this rotation, namely
\[
V := \partial_\theta R_\theta|_{\theta = 0}.
\]
Let $V_\perp$ be the Euler vector field of $T\Sigma$, i.e. the generator of the flow
\begin{equation}
\label{equation:flot-vperp}
\varphi_t^{V_\perp}(x,v) = (x,e^tv).
\end{equation}
Finally, define
\[
X_\perp := [V,X].
\]
The vector fields $\{X,X_\perp,V,V_\perp\}$ form a basis of $T(T\Sigma)$ at any point of $T\Sigma$. The metric for which this is an orthonormal frame is called the \emph{Sasaki metric}.

We will need the following result:

\begin{lemma}
The above vector fields satisfy the commutation relations:
\begin{equation}
\label{equation:commutator-relations}
\begin{split}
[X,X_\perp] = - |v|^2 V, \qquad [X,V] =-X_\perp, \qquad [X_\perp,V] = X, \\
[V_\perp,X] = X, \qquad [V_\perp, X_\perp] = X_\perp, \qquad [V,V_\perp] = 0.
\end{split}
\end{equation}
\end{lemma}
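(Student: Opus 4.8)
The plan is to work on the tangent bundle $T\Sigma$ using the canonical splitting $T(T\Sigma) = \mathbb{H} \oplus \mathbb{V}$ into horizontal and vertical subspaces determined by the Levi-Civita connection, together with the curvature identity for the Jacobi endomorphism. Recall that for $(x,v) \in T\Sigma$ a tangent vector $\zeta \in T_{(x,v)}(T\Sigma)$ is encoded by the pair $(\pi_* \zeta, \mathcal{K}\zeta) \in T_x\Sigma \times T_x\Sigma$, where $\mathcal{K}$ is the connector; horizontal vectors correspond to $(w,0)$ and vertical vectors to $(0,w)$. In these coordinates $X$ is the horizontal lift of $v$, i.e. $X_{(x,v)} \leftrightarrow (v,0)$, the Euler field is $V_\perp \leftrightarrow (0,v)$, and the vertical rotation generator $V$ is the vertical lift of $v^\perp := i v$ (rotation by $\pi/2$), i.e. $V_{(x,v)} \leftrightarrow (0, v^\perp)$. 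The standard commutator formulas for horizontal and vertical lifts on a Riemannian manifold (see e.g. Paternain, Chapter 1) then give, for vector fields obtained by lifting, brackets expressed in terms of covariant derivatives and the curvature tensor $R$. Since $\Sigma$ has constant curvature $-1$, one has $R(v,w)v = -(\,|v|^2 w - \langle v,w\rangle v\,)$, and this is the only place curvature enters.

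First I would establish the three relations in the first line. For $[X,V]$: $X$ is the horizontal geodesic field and $V$ is the vertical lift of $v^\perp$; the bracket of a horizontal lift with a vertical lift is vertical and equals the vertical lift of $\nabla v^\perp$ along the flow minus correction terms, which after unwinding gives $-X_\perp$ by the very definition $X_\perp = [V,X]$. In fact this first identity $[X,V] = -X_\perp$ is just the definition $X_\perp := [V,X]$ rewritten with a sign, so there is nothing to prove there. For $[X_\perp, V]$: here I use the Jacobi identity $[[V,X],V] = -[[X,V],V] = [X,[V,V]] - [V,[V,X]]$ — more cleanly, since $R_\theta$ generates an $SO(2)$ action conjugating $X$ to the rotated horizontal fields, one computes $[V,X_\perp] = [V,[V,X]]$ and the rotation action on the frame $\{X, X_\perp\}$ is the standard $2\times 2$ rotation, giving $[V,[V,X]] = -X$, hence $[X_\perp,V] = X$. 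For $[X,X_\perp]$: this is the one genuinely using curvature. Writing $X_\perp$ via the connector as the horizontal lift of $v^\perp$, the bracket $[X, X_\perp]$ of two horizontal lifts has vertical part $-R(v, v^\perp)v$ (the curvature obstruction to integrability of the horizontal distribution) and zero horizontal part since $\nabla_v v = \nabla_v v^\perp = 0$ along geodesics; plugging in constant curvature $-1$ gives $-R(v,v^\perp)v = |v|^2 v^\perp - \langle v, v^\perp\rangle v = |v|^2 v^\perp$, which as a vertical lift is exactly $-|v|^2 V$ after matching the sign convention for $V$.

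For the second line, I would use that $V_\perp$ generates the fiberwise scaling $\varphi_t^{V_\perp}(x,v) = (x, e^t v)$. Under this scaling, the horizontal geodesic field transforms by $X \mapsto e^{-t} (\varphi_t^{V_\perp})_* X$ — more precisely, pulling back $X$ along $\varphi^{V_\perp}_t$ scales it because $X$ is "linear in $v$" (it is the horizontal lift of the tautological section), so $(\varphi^{V_\perp}_{-t})_* X = e^{t} X$, which upon differentiating at $t=0$ yields $[V_\perp, X] = X$. The same homogeneity argument applied to $X_\perp = [V,X]$ gives $[V_\perp, X_\perp] = X_\perp$, noting $[V_\perp, V] = 0$ because $V$ (vertical lift of $v^\perp$) is also degree-one homogeneous in $v$ while the rotation and scaling commute as $SO(2) \times \mathbb{R}_{>0}$ actions on each fiber. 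The identity $[V, V_\perp] = 0$ is then immediate from this commuting-actions picture, or directly from the coordinate description $(0,v^\perp)$ and $(0,v)$ with both brackets computed inside a single fiber where the bracket of vertical fields is the vertical lift of the ordinary bracket on $T_x\Sigma \cong \mathbb{R}^2$, which vanishes for these two linear vector fields.

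The main obstacle — really the only step requiring care rather than bookkeeping — is the $[X, X_\perp] = -|v|^2 V$ identity, since it requires correctly invoking the curvature formula for the bracket of horizontal lifts and then matching the three sign/normalization conventions simultaneously: the sign in the connector $\mathcal{K}$, the orientation convention making $V$ the generator of rotation (hence $v^\perp = +iv$ versus $-iv$), and the curvature sign $R(v,w)v = -|v|^2 w + \dots$ for curvature $-1$. Everything else follows from the $SO(2)\times\mathbb{R}_{>0}$ symmetry of the Sasaki frame and the Jacobi identity, with no curvature input. An alternative, which avoids the connector formalism entirely, is to pass to the universal cover $\mathbb{H}^2$ and its unit tangent bundle $S\mathbb{H}^2 \cong PSL(2,\mathbb{R})$, where $X, X_\perp, V$ (restricted to $S\Sigma$) are left-invariant vector fields corresponding to a standard basis of $\mathfrak{sl}(2,\mathbb{R})$, and the commutation relations become the structure constants of $\mathfrak{sl}_2$; then extend off the unit bundle using the homogeneity in $|v|$ captured by $V_\perp$. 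I would likely present the $\mathfrak{sl}_2$ computation as it is the cleanest and makes the role of constant curvature $-1$ transparent.
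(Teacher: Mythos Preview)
Your proposal is correct and follows essentially the same strategy as the paper, which simply cites \cite[Lemma 15.2.1]{Lefeuvre-book} for the structural equations on $S\Sigma$ and then remarks that the general relations on $T\Sigma$ follow by a scaling argument---precisely your homogeneity computation with the Euler field $V_\perp$. Your alternative route via the identification $S\mathbb{H}^2 \cong \mathrm{PSL}(2,\mathbb{R})$ and the structure constants of $\mathfrak{sl}_2$ is a clean variant specific to constant curvature (and in fact the paper itself uses exactly this identification later, in the proof of Proposition~\ref{proposition:dynamic}).
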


We refer to \cite[Lemma 15.2.1]{Lefeuvre-book} for a proof. (In this reference, $X_\perp$ is denoted by $H$; the computations are done on the unit tangent bundle but the above relations follow easily by a scaling argument.) Notice that in our case, the sectional curvature is $\kappa = -1$.




\subsubsection{Horocyclic flow} For $\lambda \geq 0$, define
\[
S^\lambda\Sigma := \{|v|_g=\lambda\}.
\]
We let $S\Sigma := S^{1}\Sigma$ be the unit tangent bundle. It is straightforward to verify that $X,X_\perp$ and $V$ preserve the layers $S^\lambda\Sigma$.

The geodesic flow $(\varphi_t)_{t \in \R}$ is Anosov on $S\Sigma$ in the sense that
\[
T(S \Sigma) = \R X \oplus E_s \oplus E_u,
\]
where 
\[
\begin{split}
|d\varphi_t(w)| \leq e^{-t} |w|, &\qquad \forall t \geq 0, w \in E_s \\
|d\varphi_{-t}(w)| \leq e^{-t} |w|, &\qquad \forall t \geq 0, w \in E_u.
\end{split}
\]
Here $|\bullet|$ denotes the norm induced by the \emph{Sasaki metric} on $T\Sigma$, which is defined as the metric for which $\{X,X_\perp,V,V_\perp\}$ is an orthonormal frame.

We define the (stable) horocyclic vector field by
\[
U_+ := X_\perp - V.
\]
Let $(h_t)_{t \in \R}$ be the (stable) \emph{horocyclic flow} generated by $U_+$ on $S\Sigma$. Recall that the Liouville measure on $S\Sigma$ is the unique volume form $\mu_{\mathrm{Liouv}}$ such that $\mu_{\mathrm{Liouv}}(X,X_\perp,V) = \mathrm{cst}$. The constant is normalized such that $\mu_{\mathrm{Liouv}}$ is a probability measure on $S\Sigma$. The Liouville measure is invariant by $X, X_\perp$ and $V$; it is thus also invariant by $U_+$.

\begin{theorem}
\label{lemma:horocycle}
The following holds:
\begin{enumerate}[label=\emph{(\roman*)}]
\item The flow $(h_t)_{t \in \R}$ is uniquely ergodic on $S\Sigma$ and $\mu_{\mathrm{Liouv}}$ is the only flow-invariant probability measure.
\item Let $0 < \theta < 1/2$ be such that $\theta(1-\theta) \leq \lambda_1(\Sigma)$, where $\lambda_1(\Sigma) > 0$ is the first non-zero eigenvalue of the Laplacian $\Delta_g$ on functions. Then there exists $C > 0$ such that for all $a \in C^{\infty}(S\Sigma)$, for all $T > 0$,
\[
\sup_{v \in S\Sigma} \left|\dfrac{1}{T}\int_0^T a(h_t(v))~ \dd t - \int_{S\Sigma} a(v) ~\dd\mu_{\mathrm{Liouv}}\right| \leq C T^{-\theta} \|a\|_{H^3(S\Sigma)},
\]
where $H^3(S\Sigma)$ denotes the $L^2$-based Sobolev norm of order $3$.
\end{enumerate}
\end{theorem}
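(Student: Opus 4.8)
The plan is to deduce both statements from the classical representation-theoretic structure of $\Gamma \backslash \mathrm{PSL}_2(\R)$. I would begin by identifying $S\Sigma$ with the homogeneous space $\Gamma \backslash \mathrm{PSL}_2(\R)$, where $\Gamma \subset \mathrm{PSL}_2(\R)$ is the cocompact lattice uniformizing $\Sigma$; under this identification the geodesic flow is the right action of the diagonal subgroup $a_t = \mathrm{diag}(e^{t/2}, e^{-t/2})$, the rotation $R_\theta$ is the right action of the maximal compact $\mathrm{SO}(2)$, and the stable horocyclic flow $(h_t)$ is the right action of the unipotent subgroup $n_t^- = \bigl(\begin{smallmatrix} 1 & 0 \\ t & 1 \end{smallmatrix}\bigr)$ (up to fixing conventions so that $U_+ = X_\perp - V$ matches this generator). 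The Liouville probability measure corresponds to the normalized Haar measure.

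For part (i), unique ergodicity of the horocyclic flow on a compact quotient is exactly Furstenberg's theorem \cite{Furstenberg-73}; I would simply invoke it. (Alternatively one can cite Hedlund, or derive it from the quantitative estimate in part (ii), since an effective equidistribution bound for all smooth $a$ forces every invariant probability measure to integrate every continuous function against $\mu_{\mathrm{Liouv}}$, using a density argument from $C^\infty$ to $C^0$.)

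For part (ii), the key input is Burger's quantitative equidistribution of horocycles \cite{Burger-90}, which gives a rate governed by the spectral gap of $\Delta_g$. The argument proceeds by decomposing $L^2(\Gamma \backslash G)$ into irreducible unitary $G$-representations. On the trivial representation (the constants) the ergodic average is exact. On each nontrivial irreducible component, one uses the explicit asymptotics of matrix coefficients for the unipotent flow: for a principal or complementary series representation with Casimir parameter corresponding to a Laplace eigenvalue $\lambda = s(1-s)$, the unipotent orbit integral of a smooth vector decays like $T^{-\theta}$ where $\theta = \min(1/2, 1-\mathrm{Re}(s))$ relative to an appropriate Sobolev norm of the vector; the constraint $\theta(1-\theta) \le \lambda_1(\Sigma)$ ensures this rate is uniform over all nontrivial components because the complementary-series parameters are bounded away from $1$ precisely by the spectral gap $\lambda_1(\Sigma)$. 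Summing the contributions and controlling the Sobolev norms by $\|a\|_{H^3(S\Sigma)}$ (three derivatives suffice to make the sum over $K$-types converge; this is where the order $3$ enters) yields the stated bound, uniformly in the basepoint $v$ since the estimate is an operator-norm bound on $\Gamma \backslash G$.

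The main obstacle — or rather the only real content beyond citation — is assembling the uniform-in-$v$ bound with the correct Sobolev order from the per-representation matrix-coefficient estimates, i.e. checking that the sum over the (countably many) irreducible summands converges after extracting $\|a\|_{H^3}$. This is handled by the standard device of realizing the ergodic average as $\tfrac{1}{T}\int_0^T a \circ h_t\,dt$ evaluated via the $G$-action, noting that applying the elliptic operator $(1 - \Omega_K)$ (with $\Omega_K$ the compact Casimir) a bounded number of times both smooths $a$ along the fibers and produces the needed decay in the $K$-type, and that each application costs two derivatives in the $H^s$ scale. Since the paper only needs Theorem \ref{lemma:horocycle} as a black box for Theorem \ref{theorem:horocyclic}, I would keep this step brief and refer to \cite{Burger-90, Furstenberg-73} for the details, verifying only that Burger's hypotheses (cocompactness, the spectral-gap normalization) hold in our setting.
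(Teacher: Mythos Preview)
Your proposal is correct and matches the paper's treatment: the paper does not prove this theorem at all but simply cites Furstenberg \cite{Furstenberg-73} for (i) and Burger \cite{Burger-90} for (ii), with a pointer to \cite{Flaminio-Forni-03} for refinements. Your representation-theoretic sketch is more detailed than what the paper provides, but since you ultimately defer to the same references, the approaches coincide.
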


Unique ergodicity in constant curvature was established by Furstenberg \cite{Furstenberg-73}, while the rate of convergence of ergodic averages was proved by Burger \cite{Burger-90}. A refinement of this result can be found in \cite{Flaminio-Forni-03}.

\subsection{Magnetic hyperbolic flow}

\label{ssection:magnetic-flow}

In this paragraph, we discuss the magnetic flow both on the tangent and the cotangent bundles.

\subsubsection{Magnetic flow on the tangent bundle}
\label{sssection:magnetic-tsigma}
A {\em magnetic geodesic}
is a curve $\ga $ of $\Sigma$  which satisfies the equation
\begin{equation} \label{eq:Newton_Lorentz}
\nabla_{\dot{\gamma}(t)} \dot{\gamma}(t) = - B j_{\gamma(t)}\dot{\ga} (t),
\qquad \gamma(0)=x,\; \dot{\gamma}(0)=v
\end{equation}
where $(x,v) \in T\Sigma$, $\nabla$ stands for the Levi-Civita covariant derivative, $j$ is the almost-complex structure and
$ B \in C^\infty(\Sigma)$ is the magnetic {\em intensity}. In the sequel we assume that $B$ is constant
and positive.  
The {\em magnetic flow} $(\Phi_t)_{t \in \R}$ is the autonomous flow of $T \Si$ such that for
any curve $\ga$ satisfying \eqref{eq:Newton_Lorentz},
\[
\Phi_t (x,v) := ( \ga ( t), \dot {\ga} ( t)), \qquad \forall t \in \R.
\]
\begin{lemma} The magnetic flow is generated by the vector field $F := X - B V$.
\end{lemma}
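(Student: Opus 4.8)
The plan is to integrate $F = X - BV$ explicitly and recognize its integral curves as magnetic geodesics, using the horizontal/vertical splitting of $T(T\Sigma)$ furnished by the Levi-Civita connection.

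First I would recall the geometric meaning of $X$ and $V$: by definition of the geodesic spray, $X_{(x,v)}$ is the horizontal lift of $v \in T_x\Sigma$, and since $R_\theta$ is the fiberwise rotation of $T\Sigma$, its generator $V_{(x,v)} = \partial_\theta R_\theta v|_{\theta=0}$ equals, under the canonical identification of the vertical space $T_v(T_x\Sigma) \cong T_x\Sigma$, the vector $j v$; that is, $V$ is the vertical lift of $v \mapsto jv$, where $j$ is the same almost-complex structure appearing in \eqref{eq:Newton_Lorentz} (both being fixed by the orientation of $\Sigma$). Neither identification requires $v$ to have unit length, so the discussion below takes place on all of $T\Sigma$.

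Next I would write down, for a curve $c(t) = (\gamma(t), W(t)) \in T\Sigma$ with $W$ a vector field along $\gamma$, the standard decomposition of $\dot c(t)$: its horizontal component is the horizontal lift of $\dot\gamma(t)$, and its vertical component is the vertical lift of $\tfrac{\nabla W}{dt}(t)$. Applying this with $W = \dot\gamma$, the horizontal part of $\dot c(t)$ at $c(t) = (\gamma(t),\dot\gamma(t))$ is exactly $X_{c(t)}$, while \eqref{eq:Newton_Lorentz}, which reads $\tfrac{\nabla\dot\gamma}{dt} = -B\, j\dot\gamma$, says precisely that the vertical part of $\dot c(t)$ is $-B\,V_{c(t)}$. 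Hence $\dot c(t) = (X - BV)_{c(t)} = F_{c(t)}$, so the curve $t \mapsto (\gamma(t),\dot\gamma(t))$ is the integral curve of $F$ through $(x,v)$, which is the claim; running the computation in reverse shows conversely that every integral curve of $F$ projects to a solution of \eqref{eq:Newton_Lorentz}, so the magnetic flow $\Phi_t$ coincides with the flow of $F$. As a by-product, $\tfrac{d}{dt}|\dot\gamma|^2 = 2\langle\nabla_{\dot\gamma}\dot\gamma,\dot\gamma\rangle = -2B\langle j\dot\gamma,\dot\gamma\rangle = 0$, so $F$ is tangent to each layer $S^\lambda\Sigma$, consistently with the fact that $X$, $X_\perp$ and $V$ preserve these layers.

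The only point requiring care is the orientation bookkeeping in the identification of $V$ with $v \mapsto jv$: once the orientation of $\Sigma$ is fixed — which simultaneously fixes $j$, the sense of $R_\theta$, and hence the sign in \eqref{eq:Newton_Lorentz} as written — this is immediate and the coefficient $-B$ matches. Everything else is a routine use of the horizontal/vertical calculus underlying the structural equations behind \eqref{equation:commutator-relations}.
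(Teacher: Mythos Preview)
Your proof is correct and follows essentially the same idea as the paper's: decompose the velocity of the lifted curve $t\mapsto(\gamma(t),\dot\gamma(t))$ into its horizontal and vertical parts and read off $X-BV$ from the Newton--Lorentz equation. The only difference is presentational---the paper carries this out in a local chart, writing $\nabla_{\dot c}\dot c=\ddot c+\Gamma_c(\dot c)(\dot c)$ and computing $F-X=(0,-Bj_xv)$ directly, whereas you use the coordinate-free horizontal/vertical splitting; the content is the same.
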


\begin{proof} We work in a chart $U$ of $\Si$ that we identify with an open
  subset of $\R^2$, so $TU = U \times \R^2$. Let $(x,v ) \in TU$. Let $c_1$,
  $c_2$ be a geodesic  and a magnetic geodesic respectively such that $ c_1
  ( 0 ) = x = c_2 (0)$, $\dot{c}_1 (0 ) =v = \dot{c}_2 ( 0)$. Then $X(x,v) = (
  \dot{c}_1 (0), \ddot{c}_1 (0))$ and $F(x,v) = (
  \dot{c}_2 (0), \ddot{c}_2 (0)) $, thus
  $$ F(x,v ) - X(x,v) = ( 0 , \ddot{c}_2 (0) -  \ddot{c}_1 (0)) = ( 0 ,
  ( \nabla_{\dot{c}_2} \dot{c}_2 ) (0) - ( \nabla_{\dot{c}_1} \dot{c}_1 ) (0)
  ) = ( 0 , -B j_{x} v ), $$
  where we have used that $\nabla_{\dot{c}_i} \dot{c}_i = \ddot{c}_i +
  \Gamma_{c_i} ( \dot{c}_i ) ( \dot{c}_i )$, $\Gamma$ being the connection
  one-form, and then Newton's equations \eqref{eq:newton_geodesic},
  \eqref{eq:Newton_Lorentz}. To conclude use that  $V  (x,v) = ( 0 , j_x v)$.      
\end{proof}

\begin{remark}The flow $(\Phi_t)_{t \in \R}$ is considered here on the tangent bundle $T\Sigma$, whereas it was initially defined on the cotangent bundle $T^*\Sigma$ in the introduction. In \S\ref{sssection:flow-cotangent}, we show that these two flows are equivalent under the musical isomorphism induced by the metric. To avoid unnecessary notation, unless specified explicitely, we do not distinguish between the flows on $T\Sigma$ and $T^*\Sigma$ in what follows.
\end{remark}

Recall that $(\varphi_t)_{t \in \R}$ is the geodesic flow (generated by $X$), $(R_t)_{t \in \R}$ is the $2\pi$-periodic rotation in the circle fibers of $S\Sigma$ (generated by $V$).

\begin{proposition} 
\label{proposition:dynamic}
For any $\lambda> 0$, the following holds:
  \begin{enumerate}[label=\emph{(\roman*)}]
    \item If $ \lambda< B$, $(\Phi_t)_{t \in \R}$ on $S^\lambda\Sigma$ is conjugate to $(R_{t/T_\lambda})_{t \in \R}$ on $S\Sigma$, where $T_\lambda = (B^2 - \lambda^2 )
      ^{-\frac{1}{2}}$;
      \item If $ \lambda = B$, then $(\Phi_t)_{t \in \R}$ on $S^\lambda\Sigma$ is conjugate to $(h_t)_{t \in \R}$ on $S\Sigma$; 
    \item If $ \lambda>B$, then $(\Phi_t)_{t \in \R}$ on $S^\lambda\Sigma$ is conjugate to $(\varphi_{t/T'_\lambda})_{t \in \R}$ on $S\Sigma$, where $ T_\lambda' = ( \lambda^2 -B^2 )
      ^{-\frac{1}{2}}$.
    \end{enumerate}
  \end{proposition}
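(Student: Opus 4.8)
The plan is to transport everything onto the unit tangent bundle $S\Sigma = S^1\Sigma$ by a scaling, and then to ``rotate'' the rescaled generator onto one of the three model vector fields $V$, $U_+ = X_\perp - V$ or $X$ by flowing along $X_\perp$ and along $V$; the trichotomy will fall out of whether a suitable ``hyperbolic rotation angle'' exists. For the reduction, set $m_\lambda := \varphi^{V_\perp}_{\ln\lambda}$, which by \eqref{equation:flot-vperp} is $(x,v)\mapsto(x,\lambda v)$ and maps $S^1\Sigma$ diffeomorphically onto $S^\lambda\Sigma$. From the commutation relations $[V_\perp,X]=X$, $[V_\perp,X_\perp]=X_\perp$, $[V_\perp,V]=0$ one gets $(\varphi^{V_\perp}_t)_*X = e^{-t}X$, $(\varphi^{V_\perp}_t)_*X_\perp = e^{-t}X_\perp$, $(\varphi^{V_\perp}_t)_*V = V$, so (taking $t=\ln\lambda$) $(m_\lambda)_*(\lambda X - BV) = X - BV = F$, the magnetic generator on $S^\lambda\Sigma$. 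Since a diffeomorphism $\psi$ with $\psi_*Y = Z$ conjugates the flow of $Y$ to that of $Z$, it now suffices to identify the flow of $G_\lambda := \lambda X - BV$ on $S\Sigma$, for $\lambda,B>0$.

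On $S\Sigma$ one has $|v|^2 = 1$, hence $[X_\perp,X] = V$ and $[X_\perp,V] = X$. As $X_\perp$ is tangent to the compact manifold $S\Sigma$, its flow $\varphi^{X_\perp}_s$ is a globally defined diffeomorphism of $S\Sigma$, and integrating $\tfrac{d}{ds}(\varphi^{X_\perp}_{-s})_*Z = (\varphi^{X_\perp}_{-s})_*[X_\perp,Z]$ for $Z=X$ and $Z=V$ yields
\[
(\varphi^{X_\perp}_{-s})_*X = \cosh(s)\,X + \sinh(s)\,V, \qquad (\varphi^{X_\perp}_{-s})_*V = \sinh(s)\,X + \cosh(s)\,V,
\]
i.e. $(\varphi^{X_\perp}_{-s})_*$ acts on the $(X,V)$-plane as a hyperbolic rotation. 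Hence $(\varphi^{X_\perp}_{-s})_*G_\lambda = (\lambda\cosh s - B\sinh s)\,X + (\lambda\sinh s - B\cosh s)\,V$, and the whole statement comes down to whether, for a good choice of $s$, this can be brought onto the $V$-axis, onto the $X$-axis, or onto neither — which is dictated entirely by the position of $\lambda/B$ relative to $1$.

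If $\lambda < B$, choose $s$ with $\tanh s = \lambda/B \in (0,1)$: the $X$-component vanishes and the $V$-component equals $-\sqrt{B^2-\lambda^2} = -T_\lambda^{-1}$, so the magnetic flow on $S^\lambda\Sigma$ is conjugate to the flow of $-T_\lambda^{-1}V$, which is the fiber rotation reparametrized to period $2\pi T_\lambda$, i.e. $(R_{t/T_\lambda})$ (the direction of rotation being fixed by the orientation conventions). If $\lambda > B$, choose instead $s$ with $\tanh s = B/\lambda\in(0,1)$: the $V$-component vanishes and $(\varphi^{X_\perp}_{-s})_*G_\lambda = \sqrt{\lambda^2-B^2}\,X = (T_\lambda')^{-1}X$, whose flow is the reparametrized geodesic flow $(\varphi_{t/T_\lambda'})$. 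If $\lambda = B$ there is no $s$ with $\tanh s = 1$, and indeed $(\varphi^{X_\perp}_{-s})_*G_B = Be^{-s}(X-V)$ for every $s$; so I bring in a rotation. From $[V,X] = X_\perp$ and $[V,X_\perp] = -X$ one gets $(\varphi^V_{-\theta})_*X = \cos\theta\,X + \sin\theta\,X_\perp$, hence $(\varphi^V_{-\pi/2})_*\big(B(X-V)\big) = B(X_\perp-V) = BU_+$; and since $[X,U_+] = U_+$ (a one-line check on $S\Sigma$), $(\varphi^X_{\ln B})_*(BU_+) = U_+$. Composing $m_B$, $\varphi^V_{-\pi/2}$ and $\varphi^X_{\ln B}$ thus conjugates the magnetic flow on $S^B\Sigma$ to the flow of $U_+$, namely $(h_t)$.

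I do not expect a serious obstacle. The points needing care are (a) the direction in which $X$ and $X_\perp$ rescale under $m_\lambda$ (an easily-misplaced sign), (b) the direction of the rotation in the elliptic case, which is determined only once the orientation and the sign of $B$ are fixed, and (c) the observation that the non-solvability of $\tanh s = 1$ is precisely what forces the parabolic (horocyclic) behaviour at the critical energy $\lambda = B$. As a cross-check one may use the identification $S\Sigma \cong \Gamma\backslash\mathrm{PSL}_2(\R)$, under which $X,X_\perp,V$ generate the right regular action: $G_\lambda$ then corresponds to a traceless $2\times 2$ matrix of determinant $\tfrac14(B^2-\lambda^2)$, elliptic, parabolic or hyperbolic according to the sign of that determinant, and the diffeomorphisms used above are right translations.
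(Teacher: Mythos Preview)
Your proof is correct. Both you and the paper first rescale to $G_\lambda = \lambda X - BV$ on $S\Sigma$, but then diverge: the paper invokes the identification $S\Sigma \cong \Gamma\backslash\mathrm{PSL}(2,\R)$, writes $G_\lambda$ as an element of $\mathfrak{sl}(2,\R)$, and appeals to the elliptic/parabolic/hyperbolic conjugacy classification (the trichotomy being the sign of the determinant $\tfrac14(B^2-\lambda^2)$). You instead stay with the structural equations \eqref{equation:commutator-relations} and build the conjugating diffeomorphisms explicitly as time-$s$ maps of $X_\perp$ (and, in the parabolic case, of $V$ and $X$), so that the trichotomy appears as the solvability of $\tanh s = \lambda/B$ versus $\tanh s = B/\lambda$. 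These are really the same computation --- under the homogeneous-space identification your flows are right translations and pushforward by them is the adjoint action, as you note in your cross-check --- but your version is more self-contained (it uses only the bracket relations, not the Lie-group structure) and hands you the conjugacy maps concretely, whereas the paper's version is shorter and makes the trichotomy transparent at a glance. Your caveat about the rotation sign in case~(i) is apt; the paper's proof is equally informal on that point.
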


\begin{proof}
For $\lambda > 0$, let $\Psi_\lambda : T\Sigma \to T\Sigma$ be the map defined by $\Psi_\lambda(x,v) := (x,\lambda v)$. Observe that $\Psi_\lambda^* X = \lambda X$ and $\Psi_\lambda^*V=V$. The flow of $F = X-BV$ on $S^\lambda \Sigma$ is thus conjugate to the flow of $\Psi_\lambda^* F = \lambda X -BV$ on $S\Sigma$.

Writing $S\Sigma = \Gamma\backslash\mathrm{PSL}(2,\R)$, the vector field $\lambda X - BV$ is represented by the following matrix element of $\mathfrak{sl}(2,\R)$:
\[
\lambda \begin{pmatrix} 1/2 & 0 \\ 0 & -1/2 \end{pmatrix} - B \begin{pmatrix} 0 & 1/2 \\ -1/2 & 0\end{pmatrix} = \begin{pmatrix} \lambda/2 & -B/2 \\ B/2 & -\lambda/2 \end{pmatrix}.
\]
For $\lambda = 1, B=0$, this is the generator of the geodesic flow; for $\lambda=0$, $B=1$, this is the generator of the rotation flow in the circle fibers, see \cite[Section 2.1]{Flaminio-Forni-03} for details. The eigenvalues of this matrix are $\pm\tfrac{1}{2}(\lambda^2-B^2)^{1/2}$ if $\lambda \geq B$ and $\pm \tfrac{i}{2}(B^2-\lambda^2)^{1/2}$ if $B \geq \lambda$. Notice that for $\lambda = B$, one finds the matrix
\[
\lambda/2 \begin{pmatrix} 1 & -1 \\ 1 & -1 \end{pmatrix}
\]
which is conjugate by an element of $\mathrm{PSL}(2,\R)$ to
\[
\begin{pmatrix} 0 & 1 \\ 0 & 0 \end{pmatrix},
\]
the generator of the horocyclic flow.

If two matrices in $\mathfrak{sl}(2,\R)$ are conjugate (by an element of $\mathrm{PSL}(2,\R)$), then their corresponding flows are conjugate too. Hence, if $B > \lambda$, $(\Phi_t)_{t \in \R}$ on $S^\lambda\Sigma$ is conjugate to $(R_{t(B^2-\lambda^2)^{1/2}})_{t \in \R} = (R_{t/T_\lambda})_{t \in \R}$ on $S\Sigma$. If $B=\lambda$, $(\Phi_t)_{t \in \R}$ on $S^\lambda\Sigma$ is conjugate to $(h_t)_{t \in \R}$ on $S\Sigma$. If $B < \lambda$, $(\Phi_t)_{t \in \R}$ on $S^\lambda\Sigma$ is conjugate to $(\varphi_{t(\lambda^2-B^2)^{1/2}})_{t \in \R} = (\varphi_{t/T'_\lambda})_{t \in \R}$ on $S\Sigma$.
\end{proof}

\subsubsection{Magnetic flow on the cotangent bundle} \label{sssection:flow-cotangent}

We now define the magnetic flow on the cotangent bundle, and show that it coincides with that on the tangent bundle up to the identification provided by the metric. Let
\[
\flat : T\Sigma \to T^*\Sigma, \qquad \flat(x,v) := (x, g_x(v,\bullet))
\]
be the musical isomorphism. Let $A$ be the Liouville
form on $T^*\Sigma$ defined by
\begin{equation}
\label{equation:liouville}
A_{(x, \xi) }  := \xi ( \dd_{x, \xi} \pi (\bullet)),
\end{equation}
where $\pi : T^*\Sigma \to \Sigma$ denotes the projection. Consider the symplectic form
\[
\Om \in C^\infty(T^* \Si,\Lambda^2 T^*(T^*\Sigma)), \qquad \Om := dA    +  B~ \pi^*\op{vol},
\]
where $\op{\vol} \in C^\infty(\Si,\Lambda^2 T^*\Sigma)$ the Riemannian volume. Let $p(x,\xi) := \tfrac{1}{2} | \xi|^2$, and define the flow $(\Phi_t^{T^*\Sigma})_{t \in \R}$ as the Hamiltonian flow of the function $p \in C^\infty(T^*\Sigma)$ with respect to the symplectic form $\Omega$. Namely $(\Phi_t^{T^*\Sigma})_{t \in \R}$ is generated by $H^{\Omega_p}$ such that
\[
dp = \Omega(\bullet, H^\Omega_p).
\]
To avoid confusion, let us denote temporarily $(\Phi_t^{T\Sigma})_{t \in \R}$ the magnetic flow defined on $T\Sigma$ in \S\ref{sssection:magnetic-tsigma}.

\begin{lemma}
The map $\flat$ intertwines $(\Phi_t^{T\Sigma})_{t \in \R}$ and $(\Phi_t^{T^*\Sigma})_{t \in \R}$, that is
\[
\Phi_t^{T^*\Sigma} \circ \flat = \flat \circ \Phi_t^{T\Sigma}, \qquad \forall t \in \R.
\]
\end{lemma}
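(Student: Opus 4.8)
The plan is to show that $\flat$ pulls back the cotangent data to the tangent data, and then invoke naturality of Hamiltonian flows. Concretely, I would proceed in three steps. First, I would compute $\flat^* A$, where $A$ is the tautological one-form on $T^*\Sigma$. Since $\flat$ commutes with the projections to $\Sigma$ (i.e. $\pi_{T^*\Sigma}\circ\flat = \pi_{T\Sigma}$), for $(x,v)\in T\Sigma$ and a tangent vector $w\in T_{(x,v)}(T\Sigma)$ one has $(\flat^*A)_{(x,v)}(w) = A_{\flat(x,v)}(\dd\flat(w)) = (g_x(v,\bullet))\big(\dd\pi_{T^*\Sigma}(\dd\flat(w))\big) = g_x\big(v, \dd\pi_{T\Sigma}(w)\big)$. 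Thus $\flat^*A$ is exactly the canonical one-form $\alpha_g$ on $T\Sigma$ associated to the metric $g$, whose differential $\dd\alpha_g$ is (minus, depending on sign conventions) the symplectic form obtained by pulling back $\omega_0$ under $\flat$. Differentiating, $\flat^*(\dd A) = \dd(\flat^*A) = \dd\alpha_g$.

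Second, I would handle the magnetic term: since $\pi_{T^*\Sigma}\circ\flat = \pi_{T\Sigma}$, we get $\flat^*(\pi_{T^*\Sigma}^*\vol) = (\pi_{T^*\Sigma}\circ\flat)^*\vol = \pi_{T\Sigma}^*\vol$. Hence $\flat^*\Omega = \dd\alpha_g + B\,\pi_{T\Sigma}^*\vol$, which is precisely the magnetic symplectic form on $T\Sigma$ whose Hamiltonian flow for the energy $\tfrac12|v|_g^2$ is the magnetic geodesic flow $(\Phi_t^{T\Sigma})_{t\in\R}$ — here one should cross-check against the Lemma identifying the generator as $F = X - BV$, i.e. verify that the Hamiltonian vector field of $p\circ\flat = \tfrac12|v|_g^2$ with respect to $\dd\alpha_g + B\pi^*\vol$ is indeed $X - BV$, which is a short computation using the structural equations \eqref{equation:commutator-relations} and the standard fact that $X$ is the Hamiltonian field of $\tfrac12|v|^2$ for $\dd\alpha_g$ while $V$ is Hamiltonian for the "angular" term coming from $\pi^*\vol$. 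Third, I would note $p\circ\flat = p$ in the sense that the kinetic energies correspond ($|\flat(x,v)|^2_{g^*} = |v|^2_g$), so $\flat$ is a symplectomorphism $(T\Sigma, \flat^*\Omega)\to(T^*\Sigma,\Omega)$ carrying the Hamiltonian $p$ on the target to the Hamiltonian $p$ on the source; therefore it intertwines the two Hamiltonian flows, giving $\Phi^{T^*\Sigma}_t\circ\flat = \flat\circ\Phi^{T\Sigma}_t$.

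The only genuinely delicate point is getting the sign and normalization conventions consistent: the tautological form, the sign in $\Omega = \dd A + B\pi^*\vol$, the orientation entering $\vol$ and the almost-complex structure $j$ in \eqref{eq:Newton_Lorentz}, and the factor $\tfrac12$ in $p$ must all line up so that the pulled-back Hamiltonian system on $T\Sigma$ reproduces exactly Newton's equation \eqref{eq:Newton_Lorentz} with the stated sign $-Bj_{\gamma}\dot\gamma$. I would settle this by doing the computation in a local chart, exactly as in the proof of the Lemma identifying $F = X - BV$: write $\flat$ in coordinates $(x,\xi_i = g_{ij}v^j)$, compute $\flat^*\Omega$, solve $\dd p = \flat^*\Omega(\bullet, Z)$ for the vector field $Z$, and check that the integral curves of $Z$ project to curves satisfying \eqref{eq:Newton_Lorentz}. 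Once the conventions are pinned down, everything else is naturality of the Hamiltonian formalism under symplectomorphisms, which is automatic.
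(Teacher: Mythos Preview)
Your proposal is correct and follows essentially the same approach as the paper: pull back $A$ and $\pi^*\vol$ through $\flat$, identify $\flat^*\Omega = d\alpha + B\pi^*\vol$ on $T\Sigma$, and then reduce to checking that the Hamiltonian vector field of $h(x,v)=\tfrac12|v|^2$ with respect to this form is $X-BV$. The paper carries out this last verification explicitly by computing $\flat^*\Omega(\bullet, X-BV)$ on the frame $\{X,X_\perp,V,V_\perp\}$ and showing that $d\alpha(\bullet,V) = \pi^*\vol(\bullet,X)$ so that the cross terms cancel; your phrase ``$V$ is Hamiltonian for the angular term'' is a bit loose (what is actually used is the cancellation just mentioned, together with $\pi^*\vol(\bullet,V)=0$ since $V$ is vertical), but the computation you outline is the same one.
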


\begin{proof}
Define the $1$-form $\alpha := \flat^*A$ on $T\Sigma$. It satisfies
\[
\alpha_{(x,v)} = g_x(\dd_{x,v}\pi(\bullet),v),
\]
see \cite[Lemma 1.37, item 1]{Paternain-99}. Hence
\[
\flat^*\Omega = d\alpha + B~ \pi^*\vol,
\]
where $\pi : T\Sigma \to \Sigma$ is the footpoint projection. Define $h(x,v) := |v|^2_g/2$. Let $H_h^{\flat^*\Omega}$ be the Hamiltonian vector field of $h$, computed with respect to $\flat^*\Omega$ on $T\Sigma$, namely
\[
dh = \flat^*\Omega(\bullet, H_h^{\flat^*\Omega}).
\]
It is immediate that $\flat^* H_p^{\Omega} = H_h^{\flat^*\Omega}$ so the claim boils down to showing that
\[
H_h^{\flat^*\Omega} = X - B V.
\]
For that, we compute:
\begin{equation}
\label{equation:mouch}
\flat^*\Omega(\bullet,X-BV) = \dd\alpha(\bullet, X) - B \dd \alpha(\bullet, V) + B \pi^*\vol(\bullet, X).
\end{equation}
A quick computation reveals that $\dd \alpha(\bullet, X) = dh$. Indeed, $\dd \alpha(Y,X) = 0 = dh(Y)$ for $Y = X,X_\perp,V$ and $\dd \alpha(V_\perp,X) = |v|^2 = dh(V_\perp)$ using the $2$-homogeneity of $h$ and that $V_\perp$ is the Euler vector field.

In addition, using \cite[Proposition 1.24]{Paternain-99} for the formula for $\dd \alpha$, we find:
\[
\dd \alpha(\bullet,V) = - g_x(j(x)v, \dd \pi(\bullet)) = \pi^*\vol(\bullet,X),
\]
so the last two terms in \eqref{equation:mouch} cancel out. This proves that $\flat^*\Omega(\bullet,X-BV) = dh$, and thus $H_h^{\flat^*\Omega} = X - B V$.
\end{proof}

In what follows, we will often implicitly identify the magnetic flow on $T^*\Sigma$ and $T\Sigma$. We also drop the index $T\Sigma$ or $T^*\Sigma$ on the magnetic flow $(\Phi_t)_{t \in \R}$.



\subsubsection{Propagation estimate}
The following lemma will be crucial in the proof of Theorem \ref{theorem:horocyclic}; it computes effectively the Lyapunov exponents of the magnetic flow on the energy layers $\{p \leq E\}$. We write $x_+ := \max(x,0)$ for $x \in \R$. To uniformize notation, we also let $p(x,v) := \tfrac{1}{2}|v|^2$ on $T\Sigma$ (instead of $h$ as in \S\ref{sssection:flow-cotangent}).

\begin{lemma}[Propagation estimate]
\label{lemma:wow}
Let $n \geq 0$. There exist a constant $C_n > 0$ and an integer $m_n \geq 0$ (with $m_0 = 0$) such that for all $E \in [0,10E_c]$, for all $f \in C^\infty_{\comp}(T\Sigma)$ with $\supp(f) \subset \{p \leq E\}$, for all $t \geq 0$:
\begin{equation}
\label{equation:key-bound}
\|f \circ \Phi_t\|_{C^n(T\Sigma)} \leq C_n \langle t \rangle^{m_n} e^{\sqrt{2}(E-E_c)_+^{1/2}n t}\|f\|_{C^n(T\Sigma)}.
\end{equation}
\end{lemma}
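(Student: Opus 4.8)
\emph{Plan.}
The strategy is to reduce the estimate to a bound on the space-derivatives of the magnetic flow map $\Phi_t$ over the compact set $\{p\le E\}$, and to extract the latter from the variational (linearized) equation of $\Phi_t$, using crucially that $p=\tfrac12|v|^2$ is a first integral of $(\Phi_t)$, so that the linearized equation has \emph{constant} coefficients along each orbit. Concretely, since $(\Phi_t)$ preserves the energy layers, $f\circ\Phi_t$ is still supported in $\{p\le E\}$, so every $C^n$-norm below is a supremum over this compact set. Working in a fixed finite atlas of a neighbourhood of $\{p\le E\}$ with uniformly $C^\infty$-bounded transition maps, the multivariate Faà di Bruno formula yields
\[
\|f\circ\Phi_t\|_{C^n(T\Sigma)}\ \le\ C_n\,\|f\|_{C^n(T\Sigma)}\Big(1+\sum_{\substack{1\le k\le n,\ a_i\ge1\\ a_1+\cdots+a_k\le n}}\ \prod_{i=1}^{k}\|d^{a_i}\Phi_t\|_{C^0(\{p\le E\})}\Big),
\]
so it suffices to prove that for every integer $a\ge1$ there are constants $C_a,m_a$ such that $\|d^{a}\Phi_t\|_{C^0(\{p\le E\})}\le C_a\langle t\rangle^{m_a}e^{\sqrt2(E-E_c)_+^{1/2}\,a\,t}$ for all $t\ge0$, uniformly in $E\in[0,10E_c]$: inserting this bound into the sum and using $\sum_i a_i\le n$ produces \eqref{equation:key-bound}, with $m_0=0$ since composition with a diffeomorphism preserves $C^0$.

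\emph{First differential.}
On $\{0<p\le E\}$ I would work in the orthonormal Sasaki frame $(e_1,e_2,e_3,e_4)=(X,X_\perp,V,V_\perp)$. From \eqref{equation:commutator-relations} and $F=X-BV$ one gets $[F,X]=-BX_\perp$, $[F,X_\perp]=BX-|v|^2V$, $[F,V]=-X_\perp$, $[F,V_\perp]=-X$; the key point is that these structure constants depend only on $|v|=:\lambda$, which is constant along orbits. Writing a solution of the variational equation $\dot Z=DF(\Phi_t(x,v))Z$ in this frame, $Z(t)=\sum_i C_i(t)\,e_i(\Phi_t(x,v))$, the vector $C(t)$ obeys the \emph{constant}-coefficient system $\dot C=-A(\lambda)C$, where $A(\lambda)=\bigl(\begin{smallmatrix}A_3(\lambda)&b\\0&0\end{smallmatrix}\bigr)$ is the matrix of $Y\mapsto[F,Y]$ in the frame, with $A_3(\lambda)=\bigl(\begin{smallmatrix}0&B&0\\-B&0&-1\\0&-\lambda^2&0\end{smallmatrix}\bigr)$ and $b=(-1,0,0)^{\!\top}$, so that $\|d\Phi_t\|_{\mathrm{op}}=\|e^{-tA(\lambda)}\|$ on $S^\lambda\Sigma$. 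By Cayley–Hamilton, $A_3(\lambda)^3=(\lambda^2-B^2)A_3(\lambda)$, hence with $\nu^2:=\lambda^2-B^2$,
\[
e^{-tA_3(\lambda)}=I-\frac{\sinh(\nu t)}{\nu}A_3(\lambda)+\frac{\cosh(\nu t)-1}{\nu^2}A_3(\lambda)^2 ,
\]
where the coefficient functions are entire in $\nu^2$ (becoming $\sin/\cos$ when $\lambda<B$) and bounded by $\langle t\rangle^2 e^{\nu_+ t}$ for $t\ge0$, with $\nu_+:=\sqrt{(\lambda^2-B^2)_+}$. Since $\|A_3(\lambda)\|$ and $\|A_3(\lambda)^2\|$ are bounded for $\lambda^2\le 20E_c$, and using the triangular structure of $A(\lambda)$ for the $V_\perp$-direction, this gives $\|e^{-tA(\lambda)}\|\le C\langle t\rangle^3 e^{\nu_+ t}$ uniformly; as $B^2=2E_c$ forces $\nu_+\le\sqrt2(E-E_c)_+^{1/2}$ on $\{p\le E\}$, we obtain the $a=1$ bound on $\{0<p\le E\}$, which extends to all of $\{p\le E\}$ by continuity of $(x,v)\mapsto\|d\Phi_t\|_{\mathrm{op}}$.

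\emph{Higher differentials, and the main obstacle.}
For $a\ge2$ I would proceed by induction. Differentiating $\partial_t\Phi_t=F\circ\Phi_t$ in space $a$ times gives $\partial_t(d^a\Phi_t)=(DF\circ\Phi_t)\,d^a\Phi_t+R_a(t)$, where $R_a$ is a finite (Faà di Bruno) sum of terms $(D^kF\circ\Phi_t)(d^{a_1}\Phi_t,\dots,d^{a_k}\Phi_t)$ with $2\le k\le a$, $\sum_i a_i=a$ and each $a_i\le a-1$; since $D^kF$ is bounded on $\{p\le10E_c\}$, the induction hypothesis bounds $\|R_a(t)\|$ by $C\langle t\rangle^{M}e^{\sqrt2(E-E_c)_+^{1/2}\,a\,t}$ for a suitable $M$. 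The fundamental solution of the homogeneous equation from time $s$ to time $t$ is the differential $d\Phi_{t-s}$ at the orbit point $\Phi_s(x,v)\in\{p\le E\}$, hence has norm $\le C\langle t-s\rangle^3 e^{\sqrt2(E-E_c)_+^{1/2}(t-s)}$ by the previous step; since $d^a\Phi_0=0$, Duhamel's formula together with the elementary inequality $e^{r(t-s)}e^{ars}\le e^{art}$ for $0\le s\le t$, $r\ge0$, closes the induction, and combined with the reduction step this proves the lemma. The only genuinely delicate point is the uniformity in $\lambda$ in the previous step: as $\lambda$ crosses the critical value $B$, the matrix $A_3(\lambda)$ passes from having a pair of purely imaginary eigenvalues (elliptic regime), through a non-trivial nilpotent (parabolic regime), to a pair of real eigenvalues (hyperbolic regime), and a naive diagonalization would produce conjugating matrices whose norms blow up near $\lambda=B$; the closed form above circumvents this, since $\sinh(\nu t)/\nu$ and $(\cosh(\nu t)-1)/\nu^2$ are entire in $\nu^2$ and therefore uniformly controlled across $\nu=0$.
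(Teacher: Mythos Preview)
Your proof is correct and follows essentially the same approach as the paper. Both arguments work in the Sasaki frame $\{X,X_\perp,V,V_\perp\}$, exploit that the linearized equation along an orbit has coefficients depending only on $|v|$ (which is conserved), solve the resulting constant-coefficient system explicitly, and handle the delicate uniformity across the critical value $|v|=B$ via the fact that $\sinh(\nu t)/\nu$ and $(\cosh(\nu t)-1)/\nu^2$ are entire in $\nu^2$.

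The only difference is organizational. The paper writes $d\Phi_t(Z)=a(t,v)X+b(t,v)X_\perp+c(t,v)V+d(t,v)V_\perp$, solves the same ODEs you set up (your matrix $A_3(\lambda)$ is exactly their system \eqref{equation:at}--\eqref{equation:dt}), and then inducts directly on $\|f\circ\Phi_t\|_{C^{n}}$ by bounding the $C^n$-norms of the explicit coefficient functions $a,b,c,d$ in the $v$-variable (equations \eqref{equation:bt-sharp}, \eqref{equation:at-ct-sharp}). You instead reduce via Fa\`a di Bruno to $C^0$-bounds on $d^a\Phi_t$ and close the induction on $a$ by Duhamel, using that the propagator from time $s$ to $t$ is $d\Phi_{t-s}$ at the orbit point $\Phi_s(x,v)\in\{p\le E\}$. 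Both routes are standard and give the claimed estimate; your Cayley--Hamilton shortcut $A_3(\lambda)^3=(\lambda^2-B^2)A_3(\lambda)$ is a nice way to package the paper's case-by-case formulas \eqref{equation:bt-complete1}--\eqref{equation:bt-complete3}. One cosmetic point: since $E$ ranges over $[0,10E_c]$, the atlas in your reduction step should be fixed once on a neighbourhood of $\{p\le 10E_c\}$ rather than of $\{p\le E\}$, so that the Fa\`a di Bruno constants are genuinely independent of $E$.
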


The proof actually gives
\[
m_n = 3n + n(n+1)/2.
\]
The important point is that $C_n$ is independent of the maximal energy layer $\{p=E\}$ supporting $f$. We emphasize that Lemma \ref{lemma:wow} is stated on the tangent bundle $T\Sigma$ but it also holds for the horocyclic flow on $T^*\Sigma$ (see the discussion at the end of \S\ref{ssection:magnetic-flow}).

\begin{proof}
We define inductively the $C^n$ norm for $n \geq 0$ by
\begin{equation}
\label{equation:definition-cn}
\|f\|_{C^{n+1}(T\Sigma)} := \|f\|_{C^n(T\Sigma)} + \sum_{Z = X,X_\perp,V,V_\perp} \|Zf\|_{C^n(T\Sigma)}.
\end{equation}
We prove the claim by iteration on $n$. Fix $v \in T\Sigma$ and $Z \in T_{v}(T\Sigma)$ (we drop the footpoint $x$ in the notation). We write for $t \in \R$:
\[
d\Phi_t(Z) = a(t,v) X(\Phi_t(v)) + b(t,v) X_\perp(\Phi_t(v)) + c(t,v) V(\Phi_t(v)) + d(t,v) V_\perp(\Phi_t(v)).
\]
Precomposing with $(d\Phi_t)^{-1} $, differentiating with respect to $t \in \R$ and using \eqref{equation:commutator-relations} to compute the Lie brackets appearing, one verifies that (we drop the $v$ in the notation):
\begin{align}
\label{equation:at} \dot{a}(t)- B b(t) - d(t) &= 0, \\
\label{equation:bt} \ddot{b}(t)+(B^2-|v|^2)b(t)-Bd(t)&=0, \\
\label{equation:ct} \dot{c}(t)-|v|^2b(t)&=0, \\
\label{equation:dt} \dot{d}(t) &= 0.
\end{align}
(The equation obtained for $\dot{b}(t)$ is $\dot{b}(t) - Ba(t)-c(t) = 0$. Differentiating once again with respect to $t \in \R$ and using \eqref{equation:at} and \eqref{equation:ct}, one obtains \eqref{equation:bt}.)

By \eqref{equation:dt}, $d(t)=d(0)$. Inserting this in \eqref{equation:bt}, we find that for $|v| < B$,
\begin{equation}
\label{equation:bt-complete1}
\begin{split}
b(t,v) &= \dfrac{Bd(0)}{B^2-|v|^2}\left(1-\cos((B^2-|v|^2)^{1/2}t) \right) + b(0) \cos((B^2-|v|^2)^{1/2}t) \\
&\hspace{6cm} + \dfrac{Ba(0)+c(0)}{(B^2-|v|^2)^{1/2}} \sin((B^2-|v|^2)^{1/2}t).
\end{split}
\end{equation}
For $|v|=B$:
\begin{equation}
\label{equation:bt-complete2}
b(t,v) = \dfrac{Bd(0)}{2} t^2 + b(0) + (Ba(0)+c(0))t.
\end{equation}
(Notice that one recovers \eqref{equation:bt-complete2} from \eqref{equation:bt-complete1} by letting $|v| \to B$.) For $|v| > B$:
\begin{equation}
\label{equation:bt-complete3}
\begin{split}
b(t,v) &= \dfrac{Bd(0)}{B^2-|v|^2}\left(1-\cosh((|v|^2-B^2)^{1/2}t) \right) + b(0) \cosh((|v|^2-B^2)^{1/2}t) \\
&\hspace{6cm} + \dfrac{Ba(0)+c(0)}{(|v|^2-B^2)^{1/2}} \sinh((|v|^2-B^2)^{1/2}t).
\end{split}
\end{equation}
Using \eqref{equation:at} and \eqref{equation:ct}, we then find:
\begin{align}
\label{equation:at-complete}
a(t,v) &= a(0) + td(0) + B\int_0^t b(s,v) \dd s, \\
\label{equation:ct-complete}
c(t,v) &= c(0) + |v|^2 \int_0^t b(s,v) \dd s.
\end{align}
It follows from \eqref{equation:bt-complete1}, \eqref{equation:bt-complete2} and \eqref{equation:bt-complete3} that for all $n \geq 0$, there exists $C_n > 0$ such that:
\begin{equation}
\label{equation:bt-sharp}
\|b(t,\bullet)\|_{C^n(\{p \leq E\})} \leq C_n \langle t \rangle^{2+n} e^{\sqrt{2}(E-E_c)_+^{1/2}t}.
\end{equation}
Indeed, there exists a constant $C > 0$ such that for all $\omega \in (0,1)$, for all $t \geq 0$,
\[
\max\left(\omega^{-2}(1-\cosh(\omega t)), \omega^{-1}\sinh(\omega t)\right) \leq C \langle t\rangle^2 e^{\omega t},
\]
and the same holds by replacing respectively $\sinh$ and $\cosh$ by $\sin$ and $\cos$. Inserting the previous estimate in \eqref{equation:bt-complete1} and \eqref{equation:bt-complete3} proves \eqref{equation:bt-sharp} in the case $n=0$. Taking derivatives, and iterating the same argument, one obtains \eqref{equation:bt-sharp}. Then, inserting \eqref{equation:bt-sharp} in \eqref{equation:at-complete} and \eqref{equation:ct-complete}, we find that:
\begin{equation}
\label{equation:at-ct-sharp}
\|a(t,\bullet)\|_{C^n(T\Sigma)}, \|c(t,\bullet)\|_{C^n(T\Sigma)} \leq C_n \langle t\rangle^{3+n} e^{\sqrt{2}(E-E_c)_+^{1/2}t}.
\end{equation}

We now go back to \eqref{equation:definition-cn}. Our aim is to estimate:
\[
\|f\circ\Phi_t\|_{C^{n+1}(T\Sigma)} = \|f\circ\Phi_t\|_{C^n(T\Sigma)} + \sum_{Z = X,X_\perp,V,V_\perp} \|df \circ \Phi_t (d\Phi_t(Z))\|_{C^n(T\Sigma)},
\]
in order to prove \eqref{equation:key-bound} iteratively on $n \geq 0$. For $n = 0$, the claim is immediate with $m_0 = 0$ and $C_0 = 1$. Suppose that it holds for $n \geq 0$. Notice that
\[
\begin{split}
\|df \circ \Phi_t (d\Phi_t(Z))\|_{C^n} & = \|df \circ \Phi_t (a(t,\bullet) X + b(t,\bullet) X_\perp + c(t,\bullet) V + d(t,\bullet) V_\perp)\|_{C^n} \\
& \lesssim \|a(t,\bullet)\|_{C^n} \|(Xf)\circ\Phi_t\|_{C^n} + \|b(t,\bullet)\|_{C^n} \|(X_\perp f)\circ\Phi_t\|_{C^n}  \\
& \qquad + \|c(t,\bullet)\|_{C^n} \|(Vf)\circ\Phi_t\|_{C^n}  + \|d(t,\bullet)\|_{C^n} \|(V_\perp f)\circ\Phi_t\|_{C^n} \\
& \lesssim \langle t \rangle^{3+n} e^{\sqrt{2}(E-E_c)_+^{1/2}t} \langle t \rangle^{m_n} e^{\sqrt{2}(E-E_c)_+^{1/2}nt} \max_{Z=X,X_\perp,V,V_\perp}\|Z f\|_{C^n} \\
& \leq C_{n+1}\langle t \rangle^{m_{n+1}} e^{\sqrt{2}(E-E_c)_+^{1/2}(n+1)t} \|f\|_{C^{n+1}},
\end{split}
\]
for some uniform constants $C_{n+1} > 0$, where we used in the second line that $\|\bullet\|_{C^n(T\Sigma)}$ is an algebra norm, and in the third line the estimates \eqref{equation:bt-sharp} and \eqref{equation:at-ct-sharp} together with the inductive assumption \eqref{equation:key-bound}, and the fact that $f$ has support in the energy layers $\{p \leq E\}$. This proves the claim by setting $m_{n+1} := m_n + 3 + n$.
\end{proof}

\subsection{Magnetic hyperbolic Laplacian} \label{ssection:landau} By the Gauss-Bonnet formula, the metric being
hyperbolic, the volume is 
\begin{equation}
\int_{\Si} \vol = 4 \pi (\textsl{g} -1 ),
\end{equation}
where $\textsl{g}\geqslant
2 $ is the genus of $\Si$. Let $L \to \Sigma$ be a Hermitian line bundle with
a Hermitian connection
$\nabla$ whose curvature is $-i B \op{vol}
$, $B > 0$ being a positive constant. Since the Chern class of $L$ is $ (2
\pi)^{-1} B \cdot [ \op{vol}]$, the
degree of $L$ is 
\begin{equation}
\label{equation:deg}
\mathrm{deg}(L) =  (2 \pi)^{-1} B \int_{\Si} \vol = 2 B (\textsl{g}-1) \in \Z.
\end{equation}
Thus, the pair $( L ,
\nabla)$ exists if and only if $2 B (\textsl{g}-1)$ is an integer. When it exists, it
is unique up to tensor product by a flat Hermitian line bundle. The
space of flat Hermitian line bundles up to equivalence is in one-to-one
correspondance trough the holonomy representation with the character space
$\op{Mor} (\pi_1 ( \Si ) ,  \op{U}(1) ) \simeq \op{U} (1) ^{2\textsl{g}}$. An example of a pair $(L, \nabla)$ is provided by the canonical bundle $K := T^{1,0} \Si$,
endowed with its Chern connection, whose curvature is $- i \vol$, so in this
case $B=1$.

Let $\Delta := \frac{1}{2} \nabla^* \nabla $ be the Laplacian acting on $C^\infty
( \Si , L)$. Since $\Si$ is compact, $\Delta$ has a discrete spectrum and each
eigenvalue has finite multiplicity. We denote by $\la_0 \leqslant \la_1
\leqslant \ldots $ the eigenvalues of $\Delta$. The following result was
proved in \cite{Iengo-Li-94} (see also \cite{Tejero-06, Landau2}):

\begin{proposition} \label{theo:Landau_Level}
The following holds:
\begin{enumerate}[label=\emph{(\roman*)}]
\item For any pair $( L , \nabla)$ with curvature $F_\nabla=-iB
  \op{vol}$ with $B>0$, the first $N :=  \lfloor B \rfloor $ eigenvalues  are
  given by 
\begin{gather} \label{eq:valeur_propre}
  \la_{m } = B ( \tfrac{1}{2} + m ) - \dfrac{ m ( m+1 ) } {2},\qquad  0
  \leqslant m < N,
\end{gather}
and the multiplicity of $\la_m$ is $2 (\textsl{g}-1)(B-\frac{1}{2} - m)$ when $m \leqslant N- 2$.

\item If $(L,\nabla) = (K^r,\nabla^{\mathrm{Chern}})$ with $ r \geq 1$, then \eqref{eq:valeur_propre} holds for
$m \leqslant  r$ and the remaining part of the spectrum is given by 
 $\la_{r+n} = \la_r + \frac{1}{2} \mu_n$ where $\{ \mu_n, \; n \geq 0 \}$ is the
  spectrum of the Laplace-Beltrami operator of $(\Sigma, g )$.
  \end{enumerate}
\end{proposition}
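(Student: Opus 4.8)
The strategy is to reduce the problem to a purely representation-theoretic computation on $\mathrm{PSL}(2,\R)$, or equivalently to a careful analysis of the "creation" and "annihilation" operators built from the structural vector fields $X, X_\perp, V$ of \S\ref{ssection:geo}. The point is that the line bundle $L^{\otimes k}$ (and in the first part, $L$ itself) can be realized, after pulling back to the unit tangent bundle $S\Sigma = \Gamma\backslash\mathrm{PSL}(2,\R)$, as a subspace of functions on $S\Sigma$ transforming with a fixed weight under the circle action $(R_\theta)$; concretely, sections of $L$ of "weight" proportional to $B$ correspond to functions $f$ on $\Gamma\backslash\mathrm{PSL}(2,\R)$ with $Vf = i c f$ for the appropriate constant $c$ determined by the curvature $-iB\vol$. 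First I would set up this identification precisely, tracking how the Hermitian connection $\nabla$ and its Laplacian $\Delta = \tfrac12 \nabla^*\nabla$ translate into a second-order operator in $X, X_\perp$ on the weight space; using the commutation relations \eqref{equation:commutator-relations} (with curvature $\kappa = -1$), this operator should become, up to lower-order terms, a shifted Casimir element of $\mathfrak{sl}(2,\R)$.

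Next I would introduce the raising/lowering operators $\eta_\pm := X_\perp \pm i X$ (or their suitable normalizations), which shift the $V$-weight by $\pm 1$ and hence map sections of one tensor power to the next. The key algebraic identities — obtainable mechanically from \eqref{equation:commutator-relations} — express $\eta_- \eta_+$ and $\eta_+ \eta_-$ as $\Delta$ plus an explicit scalar depending linearly on the weight. An eigensection $u$ of $\Delta$ with eigenvalue $\lambda$ at the bottom of the spectrum must satisfy $\eta_- u = 0$ (otherwise $\eta_- u$ would be an eigensection of a lower tensor power with a strictly smaller eigenvalue, eventually violating positivity of $\nabla^*\nabla$); feeding $\eta_- u = 0$ into the identity $\eta_+ \eta_- = \Delta + (\text{scalar})$ pins down $\lambda$ to be exactly the candidate value $B(\tfrac12 + m) - \tfrac{m(m+1)}{2}$ for the relevant weight $m$. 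Running this over $m = 0, 1, \dots, N-1$ with $N = \lfloor B \rfloor$ produces the list \eqref{eq:valeur_propre}; one checks these values are increasing in $m$ on that range (the derivative in $m$ is $B - \tfrac12 - m > 0$ for $m < N - \tfrac12$), so they genuinely are the lowest eigenvalues. The multiplicities come from identifying the $\lambda_m$-eigenspace with holomorphic sections of an explicit auxiliary line bundle (a twist of $L$ by a power of $K^{-1/2}$, well-defined on $S\Sigma$), whose dimension is computed by Riemann–Roch: the degree works out to $2(\textsl{g}-1)(B - \tfrac12 - m)$, which exceeds $2\textsl{g} - 2$ precisely when $m \leq N - 2$, so in that range $h^1$ vanishes and the dimension equals the degree.

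For part (ii), when $(L,\nabla) = (K^r, \nabla^{\mathrm{Chern}})$ we have $B = r \in \Z_{\geq 1}$, so $N = r$ and \eqref{eq:valeur_propre} already covers $m \leq r - 1$; the value $m = r$ sits at the threshold. The remaining spectrum is handled by observing that once one is above the Landau levels, the operator $\eta_+ \eta_-$ is invertible on the orthogonal complement and intertwines $\Delta$ on $K^r$ (weight $r$) with $\tfrac12 \Delta_g$ (weight $0$) up to the constant shift $\lambda_r$; equivalently, applying $\eta_-$ enough times sends such an eigensection down to an honest eigenfunction of the scalar Laplace–Beltrami operator $\Delta_g$, and conversely raising lifts it back. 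This gives the bijection $\lambda_{r+n} = \lambda_r + \tfrac12\mu_n$. The main obstacle I anticipate is bookkeeping rather than conceptual: getting the normalization of the connection Laplacian, the precise weight attached to $L$ given its curvature $-iB\vol$, and the half-integer shifts coming from the spin structure / $K^{1/2}$ all consistent, so that the algebra delivers exactly $B(\tfrac12 + m) - \tfrac{m(m+1)}{2}$ and not some rescaled version — and, on the multiplicity side, verifying the $h^1$-vanishing range is exactly $m \leq N - 2$. Since the result is quoted from \cite{Iengo-Li-94, Tejero-06, Landau2}, it would also be legitimate to simply cite those references for the detailed computation and only sketch the $\mathfrak{sl}(2,\R)$ ladder argument as above.
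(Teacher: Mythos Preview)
Your proposal is correct and would work, but it is phrased in representation-theoretic terms (ladder operators $\eta_\pm = X_\perp \pm iX$ acting on $V$-weight spaces of $L^2(S\Sigma)$) whereas the paper's sketch uses the equivalent complex-geometric language: the Bochner identity $\Delta = \Box_L + \tfrac{1}{2}B$ together with the Kodaira-type identity $\overline{\partial}_L\overline{\partial}_L^* = \Box_{L\otimes K^{-1}} + B - 1$, iterated to obtain the spectral recursion $\mathrm{Sp}(\Box_m)\setminus\{0\} = \mathrm{Sp}(\Box_{m+1}) + B - (m+1)$. Your lowering operator $\eta_-$ is essentially the $\overline{\partial}$-operator mapping $C^\infty(L\otimes K^{-m}) \to C^\infty(L\otimes K^{-(m+1)})$, and the ladder identities you plan to derive from \eqref{equation:commutator-relations} are precisely the paper's identities \eqref{eq:bochner_bosonic}. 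Both arguments finish with Riemann--Roch applied to $L\otimes K^{-m}$ for the multiplicity, so there is no difference on that side. The one genuine advantage of the paper's $\overline{\partial}$-formulation is that it applies directly to an arbitrary Hermitian pair $(L,\nabla)$ with curvature $-iB\,\mathrm{vol}$: you never have to realize sections of $L$ as weighted functions on $S\Sigma$, and hence the spin-structure / flat-twist bookkeeping that you (rightly) flag as the main obstacle simply does not arise.
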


We call the eigenvalues $\la_m$ given by \eqref{eq:valeur_propre} and their eigenspaces the Landau levels. 
These quantum levels correspond to the classical energy
levels $S^\lambda \Si$ with $\lambda <B$ on which the magnetic flow is periodic. A first
clue of this fact is that \eqref{eq:valeur_propre} applied with $m = B$ gives $\frac{1}{2} B^2$ which is exactly the classical
energy $\frac{1}{2} \lambda^2$ at the critical value $\lambda =B$.
Moreover, as was noticed by Comtet \cite{Comtet-86}, \eqref{eq:valeur_propre} can be interpreted as a
Bohr-Sommerfeld condition.

On a different perspective, a common characteristic
of the magnetic flow below the critical energy  and the Landau levels is
that they do not depend on the choice of hyperbolic metric $g$. The part of the spectrum above the
critical energy has a different nature (as seen with $L = K^r$ for instance).

\begin{proof}[Sketch of proof] This follows from the Riemann-Roch theorem and the
  following two identities
  \begin{gather} \label{eq:bochner_bosonic}
    \Delta = \Box_L + \tfrac{1}{2} B, \qquad \con{\partial}_L
    \con{\partial}_L^* = \Box_{L \otimes K^{-1}} +B -1 .
  \end{gather}
  Here, $L$ is equipped with its holomorphic structure whose Chern connection
  is $\nabla$,  $\con{\partial}_L: C^{\infty} ( L )
  \rightarrow C^{\infty} ( L \otimes K^{-1})$ is the $d$-bar
  operator with the identification  $\con{ K} = K^{-1}$ induced by
  the metric, and $\Box_L = \con{\partial}_L^*
    \con{\partial}_L$. The first identity of \eqref{eq:bochner_bosonic} is a
    classical Bochner identity, the second one is proved in \cite[Theorem 7.1]{Landau1}. 

    For any $m \in \N$, let $\con{\partial}_m = \con{\partial}_{L \otimes
      K^{-m}}$ and $\Box_m = \Box_{L \otimes K^{-m}}$. The curvature of $L
    \otimes K^{-m}$ being $\frac{1}{i} (B -m ) \op{vol}$, the second identity
    gives $$\con{\partial}_m \Box_m = (
    \Box_{m+1} + (B - m) - 1 ) \con{\partial}_m.$$
    Introduce the operator $\Box_{m}^{-1}$
     of $C^\infty( L \otimes K^{-m})$ equal to $0$ on $\ker \Box_m$
    and inverting $\Box_m$ on the orthogonal of $\ker \Box_m$. Then
    $\Box_m^{-1} \con{\partial}_m^*$ is the inverse of $\con{\partial}_m$ on
    $(\ker \Box_m)^{\perp}$, so by the previous equation
\begin{gather*}
    \Box_m = \Box_m^{-1} \con{\partial}_m^* \bigl( \Box_{m+1} + B - ( m+1)
    \bigr) \con{\partial}_m. 
  \end{gather*}
  As a consequence, the spectra satisfy
\begin{gather}  \label{eq:rec}
  \op{Sp} ( \Box_m ) \setminus \{ 0 \} = \op{Sp} (
    \Box_{m+1} ) + B - ( m +1 ). 
\end{gather}
  Moreover, since the degree of $L \otimes K^{-m}$ is $(B- m) 2 (\textsl{g} -1 )$, by
the Riemann-Roch theorem, $h^0 ( L \otimes K^{-m}) := \op{dim} ( \ker \Box_m )$ is
positive when $B - m \geqslant 1$, and equal to $(B-m) 2 (\textsl{g}-1) + 1 - \textsl{g}$ when
$B- m \geqslant 2 $.

We deduce the first part of the Theorem as follows: by the first identity of
\eqref{eq:bochner_bosonic}, $\Delta = \Box_0 + \frac{1}{2} B$ and if $B
\geqslant 1$, $\la_0 = \tfrac{1}{2} B$ and its multiplicity is $h^0 (
L)$. If $B\geqslant 2$, then $H^{0} (\Sigma, L \otimes K^{-1} ) >0$, so
$\la_1 =\frac{1}{2}B  + B -1$ by \eqref{eq:rec} and its multiplicity is $H^{0}
(\Sigma, L \otimes K^{-1} )$. 
We can repeat this until we can no longer apply the Riemann-Roch Theorem.

When $L = K^r$, after $r$ iterations, it comes that  $\op{Sp}(\Delta) \setminus \{ \la_0, \ldots , \la_{r-1} \}$
is equal to the spectrum of $\Box_r + \frac{1}{2} r^2$. By Bochner identity,
$\Box_r$ is the Laplace-Beltrami operator.
\end{proof}

%

\section{Twisted semiclassical calculus}

\label{section:microlocal}

We now introduce the semiclassical limit we will be working with. In this section, $\Sigma$ need just be a closed manifold. 
Let $L \to \Sigma$ be a Hermitian line bundle with a Hermitian connection $\nabla$.  We do not
make any assumption on the curvature $F_{\nabla}$. This section is organized as follows:
\begin{itemize}
\item In \S\ref{ssection:uniform-pseudo}, we introduce the twisted semiclassical pseudodifferential calculus;
\item In \S\ref{ssection:defect-measures}, we discuss semiclassical defect measures associated to quasimodes in this calculus.
\end{itemize}

\subsection{Definition and main properties}

\label{ssection:uniform-pseudo}

 For $m \in
\R$, denote by $\Psi^m_{\mathrm{sc}}(\Sigma)$ the space of standard
($h$-dependent) semiclassical pseudodifferential operators of degree $m \in
\R$ (see \cite[Chapter 14]{Zworski-12} for an introduction). We emphasize that
$A \in \Psi^m_{\mathrm{sc}}(\Sigma)$ is a \emph{family} of operators $A =
(A_h)_{h > 0}$, where $A_h = \Op_h(a_h) + \mc{O}(h^\infty)$, $\Op_h$ is an
arbitrary semiclassical quantization on $\Sigma$, and $a_h \in
S^m_h(T^*\Sigma)$ satisfies uniform symbolic estimates in $h > 0$ (see \cite[Chapter 3]{Lefeuvre-book} for a definition of symbol classes). In addition, the family $A$ might not be defined for all values of $h > 0$, but only on a subset.

\begin{definition}[Twisted semiclassical quantization]
A family of operators $\mathbf{A} := (\mathbf{A}_k)_{k \geq 0}$ such that
\[
\mathbf{A}_{k} : C^\infty(\Sigma, L^{k}) \to C^\infty(\Sigma, L^{k}),
\]
belongs to the space of \emph{twisted} semiclassical operators $\Psi^m_{\mathrm{tsc}}(\Sigma)$ of degree $m \in \R$ if the following holds: for all contractible open subset $U \subset \Sigma$, for all $\chi,\chi' \in C^\infty_{{\comp}}(U)$, for all $s \in C^\infty(U,L)$ such that $|s|=1$ fiberwise, there exists a family of semiclassical operators $A := (A_h) \in \Psi^m_{\mathrm{sc}}(\Sigma)$ such that for all $f \in C^\infty_{{\comp}}(U)$:
\begin{equation}
\label{equation:def}
s^{-k}\mathbf{A}_k(f s^{k}) = A_{1/k} f.
\end{equation}
\end{definition}

This quantization was introduced by the first author \cite{Charles-00}. We
also refer to \cite[Section 3.2]{Cekic-Lefeuvre-24} and \cite[Section 2]{Charles--23} for a general introduction to this quantization procedure. We now recall some of its properties. 

Define on the open set $U$ the real-valued $1$-form $\beta \in C^\infty(U,T^*U)$ by $\nabla s = -i \beta \otimes s$ (hence $\nabla s^{k} = -i k \beta \otimes s^{k}$). We also introduce the symplectic $2$-form
\begin{equation}
\label{equation:omega}
\Omega := dA + i \pi^* F_\nabla,
\end{equation}
where $dA$ is the Liouville $2$-form on $T^*\Sigma$ (see \eqref{equation:liouville}) and $\pi : T^*\Sigma \to \Sigma$ is the projection. (For the magnetic Laplacian on surfaces, we will have $\Omega = dA + B \pi^*\vol$.) It can be easily checked that \eqref{equation:omega} defines a non-degenerate (closed) $2$-form. Given $p \in C^\infty(T^*\Sigma)$, the Hamiltonian vector field of $p$ associated to $\Omega$ is denoted by $H_p^\Omega$. It is defined through the relation
\[
dp = \Omega(\bullet, H_p^\Omega).
\]
Finally, the Poisson bracket of two observables $p,q \in C^\infty(T^*M)$ computed with respect to $\Omega$ is written as $\{p,q\}^\Omega =: H^\Omega_p q$. We sum up the main properties of this calculus (below, $S^m_k(T^*\Sigma)$ denotes the space of symbols of order $m$ which may depend on $k \geq 0$; however, the symbolic estimates are required to be uniform in $k$):

\begin{proposition}
The following properties hold:

\begin{enumerate}[label=\emph{(\roman*)}]
\item \emph{\textbf{Algebra property.}} The space
\[
\Psi^\bullet_{\mathrm{tsc}}(\Sigma) := \cup_{m \in \R} \Psi^m_{\mathrm{tsc}}(\Sigma)
\]
is a graded algebra. Namely, for all $\mathbf{A}\in\Psi^m_{\mathrm{tsc}}(\Sigma)$, $\mathbf{B}\in\Psi^{m'}_{\mathrm{tsc}}(\Sigma)$, $\mathbf{A} \circ \mathbf{B} \in \Psi^m_{\mathrm{tsc}}(\Sigma)$.

\item \emph{\textbf{Principal symbol.}} For $\mathbf{A} \in \Psi^m_{\mathrm{tsc}}(\Sigma)$, there is a well-defined principal symbol given by
\[
\sigma_{\mathbf{A}}(x,\xi ; k) = a(x,\xi + \beta(x)) \in
S^m_{k}(T^*\Sigma)/k^{-1}S^{m-1}_{k}(T^*\Sigma),
\]
where $a$ is the principal symbol of $A_{1/k}$.

\item \emph{\textbf{Commutator.}} For $\mathbf{A} \in \Psi^m_{\mathrm{tsc}}(\Sigma)$, $\mathbf{B} \in \Psi^{m'}_{\mathrm{tsc}}(\Sigma)$, $[\mathbf{A},\mathbf{B}] \in k^{-1}\Psi^{m+m'-1}_{\mathrm{tsc}}(\Sigma)$ with principal symbol
\[
\sigma_{k[\mathbf{A},\mathbf{B}]} = -i \{\sigma_{\mathbf{A}},\sigma_{\mathbf{B}}\}^\Omega.
\]

\item \emph{\textbf{Calderon-Vaillancourt.}} Let $\mathbf{A} \in \Psi^0_{\mathrm{tsc}}(\Sigma)$. Then $\mathbf{A}_k : L^2(\Sigma,L^k) \to L^2(\Sigma,L^k)$ is bounded for all $k \geq 0$ with norm
\[
\|\mathbf{A}_k\|_{L^2 \to L^2} \leq \|\sigma_{\mathbf{A}}(\bullet;k)\|_{L^\infty(T^*\Sigma)} + \mc{O}(k^{-1}).
\]

\end{enumerate}

\end{proposition}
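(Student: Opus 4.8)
The plan is to reduce all four properties to their counterparts for the \emph{standard} semiclassical calculus $\Psi^\bullet_{\mathrm{sc}}(\Sigma)$ through the local defining relation \eqref{equation:def}, after first recording two elementary inputs. The first is \emph{gauge covariance}: two fiberwise-unit sections $s,s'$ of $L$ over the same contractible $U$ differ by $s'=e^{i\varphi}s$ with $\varphi\in C^\infty(U,\R)$, and the standard operators $A,A'$ that \eqref{equation:def} assigns to $\mathbf{A}$ through $s,s'$ satisfy $A'_{1/k}=e^{-ik\varphi}A_{1/k}e^{ik\varphi}$. Conjugation by $e^{i\varphi/h}$ preserves $\Psi^m_{\mathrm{sc}}$ and carries the principal symbol $a(x,\xi)$ to $a(x,\xi+d\varphi(x))$; since the local connection forms satisfy $\beta'=\beta-d\varphi$, the expression $a(x,\xi+\beta(x))$ does not depend on the trivializing section. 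The second input is symplectic: writing $\nabla s=-i\beta\otimes s$, one has $F_\nabla=-i\,d\beta$ on $U$, hence the fiberwise translation $\tau_\beta(x,\xi):=(x,\xi+\beta(x))$ of $T^*U$ obeys $\tau_\beta^*(dA)=dA+\pi^*d\beta=dA+i\pi^*F_\nabla=\Omega$, so that $\tau_\beta\colon(T^*U,\Omega)\to(T^*U,dA)$ is a symplectomorphism and $\sigma_{\mathbf{A}}=a\circ\tau_\beta$ locally. I would prove these two facts first.

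Item (ii) then follows at once: the local symbols $a(x,\xi+\beta(x))$ are gauge-independent, so (over a finite atlas of contractible charts, using locality of the standard principal symbol) they patch to a well-defined $\sigma_{\mathbf{A}}\in S^m_k(T^*\Sigma)/k^{-1}S^{m-1}_k(T^*\Sigma)$; the same patching shows $\Psi^m_{\mathrm{tsc}}(\Sigma)$ is well defined. For (i), conjugating $\mathbf{A}_k\mathbf{B}_k$ by $s^k$ over a contractible chart produces $A_{1/k}B_{1/k}\in\Psi^{m+m'}_{\mathrm{sc}}$ up to $\mc{O}(h^\infty)$ remainders (one inserts cut-offs to stay inside the chart, using pseudolocality of semiclassical operators), and reassembling the charts gives $\mathbf{A}\circ\mathbf{B}\in\Psi^{m+m'}_{\mathrm{tsc}}(\Sigma)$. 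For (iii), the same conjugation turns $[\mathbf{A}_k,\mathbf{B}_k]$ into $[A_{1/k},B_{1/k}]\in h\,\Psi^{m+m'-1}_{\mathrm{sc}}$ with $\sigma_{k[A_{1/k},B_{1/k}]}=-i\{a,b\}$, the Poisson bracket for $dA$; applying the symplectomorphism $\tau_\beta$ converts this to $-i(\{a,b\}\circ\tau_\beta)=-i\{a\circ\tau_\beta,b\circ\tau_\beta\}^\Omega=-i\{\sigma_{\mathbf{A}},\sigma_{\mathbf{B}}\}^\Omega$, which is the claimed formula, and simultaneously identifies $k[\mathbf{A},\mathbf{B}]$ as an element of $\Psi^{m+m'-1}_{\mathrm{tsc}}(\Sigma)$.

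For (iv) I would first note the crude (non-sharp) bound $\|\mathbf{A}_k\|_{L^2\to L^2}\le C\sum_{|\alpha|\le N}\|\partial^\alpha\sigma_{\mathbf{A}}\|_{L^\infty}$, obtained by summing the classical Calderon-Vaillancourt estimate over a finite atlas; this will absorb every lower-order term below. To reach the \emph{sharp} constant I would fix a partition of unity $\sum_i\chi_i^2=1$ with $\chi_i\in C^\infty_{\comp}(U_i)$, $U_i$ contractible with unit section $s_i$, and write $\langle\mathbf{A}_k u,u\rangle=\sum_i\langle\mathbf{A}_k\chi_i u,\chi_i u\rangle+\sum_i\langle[\chi_i,\mathbf{A}_k]u,\chi_i u\rangle$; the commutator sum is $\mc{O}(k^{-1})\|u\|^2$ by (iii), Cauchy--Schwarz, $\sum_i\|\chi_i u\|^2=\|u\|^2$, and the crude bound applied to the finite sum $\sum_i[\chi_i,\mathbf{A}_k]^*[\chi_i,\mathbf{A}_k]$. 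In each diagonal term, replacing $\mathbf{A}_k\chi_i$ by $\widetilde\chi_i\mathbf{A}_k\chi_i$ up to $\mc{O}(k^{-\infty})$ (with $\widetilde\chi_i=1$ near $\supp\chi_i$, supported in $U_i$) and conjugating by $s_i^k$ exhibits a compactly supported standard semiclassical operator with principal symbol $\chi_i(x)a^{(i)}(x,\xi)$, of sup-norm $\le\|\sigma_{\mathbf{A}}\|_{L^\infty}$, paired against $\chi_i u$; the classical sharp estimate gives $|\langle\mathbf{A}_k\chi_i u,\chi_i u\rangle|\le(\|\sigma_{\mathbf{A}}\|_{L^\infty}+\mc{O}(k^{-1}))\|\chi_i u\|^2+\mc{O}(k^{-\infty})\|u\|^2$, and summing over $i$ with $\sum_i\|\chi_i u\|^2=\|u\|^2$ yields $|\langle\mathbf{A}_k u,u\rangle|\le(\|\sigma_{\mathbf{A}}\|_{L^\infty}+\mc{O}(k^{-1}))\|u\|^2$. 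Applying this to $\mathbf{A}_k^*\mathbf{A}_k\in\Psi^0_{\mathrm{tsc}}(\Sigma)$ --- whose principal symbol is $|\sigma_{\mathbf{A}}|^2$ by (i)--(iii) together with the (standard) stability of the twisted calculus under adjunction --- gives $\|\mathbf{A}_k\|^2_{L^2\to L^2}\le\|\sigma_{\mathbf{A}}\|_{L^\infty}^2+\mc{O}(k^{-1})$.

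The main obstacle is the sharp constant in (iv): the estimate must be organized so that the partition of unity contributes the factor $\sum_i\chi_i^2=1$ rather than the cardinality of the atlas, every remaining piece being controlled by the easy non-sharp bound; by contrast (i)--(iii) are essentially formal once the local reduction, gauge covariance, and the symplectomorphism property of $\tau_\beta$ are in place. Since this quantization is treated in \cite{Charles-00,Cekic-Lefeuvre-24,Charles--23}, in the writeup I would state the construction and these two lemmas and then indicate the reductions above.
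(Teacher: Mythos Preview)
The paper does not prove this proposition at all: it is stated as a summary of known properties of the twisted quantization, with the proofs deferred entirely to the references \cite{Charles-00}, \cite[Section 3.2]{Cekic-Lefeuvre-24}, and \cite[Section 2]{Charles--23}. Your sketch is correct and in fact supplies considerably more detail than the paper itself; the two key ingredients you isolate (gauge covariance under $s'=e^{i\varphi}s$, and the fact that the fiber translation $\tau_\beta$ is a symplectomorphism from $(T^*U,\Omega)$ to $(T^*U,dA)$) are exactly the right mechanisms, and your reductions of (i)--(iii) to the standard calculus are the intended ones.

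One small remark on (iv): your final move, applying the quadratic-form bound to $\mathbf{A}_k^*\mathbf{A}_k$ and taking a square root, only yields a remainder $\mc{O}(k^{-1/2})$ in general, since $\sqrt{\|\sigma_{\mathbf{A}}\|_\infty^2+Ck^{-1}}\le\|\sigma_{\mathbf{A}}\|_\infty+C^{1/2}k^{-1/2}$ without further assumptions on the size of $\|\sigma_{\mathbf{A}}\|_\infty$. This is not a genuine gap for the paper's purposes: the quantitative estimate the authors actually use later, \eqref{equation:calderon-fine}, has precisely a $k^{-1/2}$ remainder, and for a fixed family $\mathbf{A}$ with $\|\sigma_{\mathbf{A}}\|_\infty$ bounded below one does recover $\mc{O}(k^{-1})$. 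If you want the $\mc{O}(k^{-1})$ as stated, it is cleaner to bypass the square root and argue directly via the sharp G\aa rding inequality applied to $\|\sigma_{\mathbf{A}}\|_\infty^2-\sigma_{\mathbf{A}_k^*\mathbf{A}_k}\ge 0$, or simply cite the references as the paper does.
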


By construction, the operator $\Delta_k := \tfrac{1}{2} (\nabla^k)^*\nabla^k \in \Psi^2_{\mathrm{tsc}}(\Sigma)$ belongs to this calculus and has principal symbol $p(x,\xi) := \sigma_{\Delta_k}(x,\xi) = \tfrac{1}{2}|\xi|^2_g$.

Conversely, one defines a quantization map $\Op : S^m_{k}(T^*\Sigma) \to \Psi^m_{\mathrm{tsc}}(\Sigma)$ as follows. Consider a smooth function $\chi \in C^\infty(\Sigma \times \Sigma)$, equal to $1$ near the diagonal $\Delta \subset \Sigma \times \Sigma$ and supported in $\{d(x,y) < \iota(g)/2024\}$, where $\iota(g)$ is the injectivity radius of the metric $g$. For $k \geq 0$, and $x,y \in \mathrm{supp}(\chi)$, let $\tau_{x \to y} : L^k_x \to L^k_y$ denote the parallel transport with respect to $\nabla$ along the unique $g$-geodesic joining $x$ to $y$. We then set:
\[
\Op_{k}(a_{k}) f(x) := \dfrac{1}{2\pi} \int_{\Sigma} \int_{T^*_x\Sigma} e^{-ik\xi(\exp^{-1}_x(y))} a_{k}(x,\xi) \tau_{y \to x}(f(y)) \chi(x,y) \dd\xi \dd y,
\]
where $\dd y$ is the Riemannian volume, and $\dd\xi$ the induced volume in the fibers of $T^*\Sigma$. It is a standard calculation to verify that
\[
\sigma_{\Op_{k}(a_{k})} = [a_{k}] \in S^m_{k}(T^*\Sigma)/k^{-1}S^{m-1}_{k}(T^*\Sigma).
\]

An operator $\mathbf{A} \in \Psi^\bullet_{\mathrm{tsc}}(\Sigma)$ is \emph{microlocally compactly supported} if $\mathbf{A} = \Op_k(a_k) + \mc{O}(k^{-\infty})$ where $a_k \in C^\infty_{\comp}(T^*\Sigma)$ has (uniformly in $k$) compact support on $T^*\Sigma$, and the remainder term $\mc{O}(k^{-\infty})$ denotes an operator with smooth Schwartz kernel on $\Sigma \times \Sigma$, all of whose derivatives are $\mc{O}(k^{-\infty})$ in $L^\infty$-norm. We denote by $\Psi^{\comp}_{\mathrm{tsc}}(\Sigma)$ the set of microlocally compactly supported operators.

Finally, we shall use the radial compactification $\overline{T^*\Sigma} :=
T^*\Sigma \sqcup \partial_\infty T^*\Sigma$ of cotangent space (obtained by
adding a sphere at infinity to each fiber of $T^*\Sigma$), see \cite[Appendix
E.1.3]{Dyatlov-Zworski-book}. As in standard semiclassical theory, an operator
$\mathbf{A} \in \Psi^m_{\mathrm{tsc}}(\Sigma)$ is \emph{elliptic} at $(x_0,\xi_0)
\in \overline{T^*\Sigma}$ if there exists a constant $c > 0$ such that for all
$k \geq 0$ large enough, $|\sigma_{\mathbf{A}}(x,\xi;k)| \geqslant
c\langle\xi\rangle^m$ in a neighborhood of $(x_0, \xi_0)$. The \emph{elliptic set} of an operator is denoted by $\Ell(\mathbf{A}) \subset \overline{T^*\Sigma}$ and the \emph{characteristic set} is the complement of the elliptic set. Operators can be inverted modulo $\mc{O}(k^{-\infty})$ smoothing operators on their elliptic set (parametrix construction).

%
%
%

\subsection{Defect measures}

\label{ssection:defect-measures}

Quasimodes of energy $E \geq 0$ are sections $u_{k} \in C^\infty(\Sigma,L^k)$ satisfying
\begin{equation}
\label{equation:qm}
(k^{-2}\Delta_k-E) u_{k} = \mc{O}_{L^2}(\eps_{k}), \qquad \|u_k\|^2_{L^2}=1,
\end{equation}
where $\eps_{k} \to_{k \to \infty} 0$. Up to extraction along a subsequence $(k_n)_{n \geq 0}$, there exists a measure $\mu$ on $T^*\Sigma$ such that for all $a \in C^\infty_{{\comp}}(T^*\Sigma)$,
\begin{equation}
\label{equation:limit}
\lim_{n \to \infty} \langle\Op_{k_n}(a)u_{k_n},u_{k_n}\rangle_{L^2} = \int_{T^*\Sigma} a(x,\xi) d\mu(x,\xi).
\end{equation}
That $\Delta_k$ is elliptic on $\partial_\infty T^*\Sigma$ (boundary at
infinity of the radial compactification of $T^*\Sigma$), implies that $\mu$ is
a probability measure, see \cite[Appendix E, Exercise
23]{Dyatlov-Zworski-book}. In the following, recall that $p(x,\xi) = \tfrac{1}{2}|\xi|^2_g$.

\begin{proposition}
\label{proposition:scl-measure}
Let $\mu$ be a probability measure associated to the quasimodes \eqref{equation:qm}. Then the following holds:
\begin{enumerate}[label=\emph{(\roman*)}]
\item Assume that $\eps_k = o(1)$. Then
\[
\supp(\mu) \subset \mathrm{Char}(k^{-2}\Delta_k-E) = \{p=E\}.
\]
\item Assume that $\eps_k = o(1/k)$. Then $\mu$ is invariant by the magnetic Hamiltonian vector field $H_{p}^\Omega$, that is for all $a \in C^\infty_{{\comp}}(T^*\Sigma)$,
\[
\int_{T^*\Sigma} H_{p}^\Omega a(x,\xi) ~\dd\mu(x,\xi) = 0.
\]
\end{enumerate}
\end{proposition}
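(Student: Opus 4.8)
The plan is to reduce both statements to the standard semiclassical toolkit, using only the properties of the twisted calculus collected in \S\ref{ssection:uniform-pseudo}. Throughout, abbreviate $P_k := k^{-2}\Delta_k$: it is self-adjoint and non-negative (since $\Delta_k = \tfrac{1}{2}(\nabla^k)^*\nabla^k$), lies in the twisted calculus with principal symbol $p(x,\xi)=\tfrac{1}{2}|\xi|^2_g$, and $\mathrm{Char}(P_k-E)=\{p=E\}$. Recall also that the quasimodes $u_k$ are smooth by \eqref{equation:qm}, so all the pairings below make sense.

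For (i) I would argue by an elliptic parametrix. Given $(x_0,\xi_0)$ with $p(x_0,\xi_0)\neq E$, choose $a\in C^\infty_{\comp}(T^*\Sigma)$ with $a\ge 0$, $a(x_0,\xi_0)>0$ and $\supp(a)\subset\{p\neq E\}$, so that $P_k-E$ is elliptic on $\supp(a)$ (its principal symbol $p-E$ being bounded away from $0$ there). The parametrix construction recalled at the end of \S\ref{ssection:uniform-pseudo} then produces $\mathbf{B}\in\Psi^{\comp}_{\mathrm{tsc}}(\Sigma)$ with $\mathbf{B}(P_k-E)=\Op_k(a)+\mathcal{O}(k^{-\infty})$; note $\mathbf{B}$ is uniformly $L^2$-bounded by Calder\'on--Vaillancourt. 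Pairing against $u_k$ and inserting $(P_k-E)u_k=\mathcal{O}_{L^2}(\eps_k)$ yields $\langle\Op_k(a)u_k,u_k\rangle=\mathcal{O}(\eps_k)+\mathcal{O}(k^{-\infty})$, which tends to $0$ since $\eps_k=o(1)$. Passing to the limit along the subsequence defining $\mu$ forces $\int a\,\dd\mu=0$, hence $\mu$ vanishes near $(x_0,\xi_0)$; as $(x_0,\xi_0)$ was arbitrary outside $\{p=E\}$, we get $\supp(\mu)\subset\{p=E\}$.

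For (ii) I would run the classical commutator argument: evaluate $\langle[P_k,\Op_k(a)]u_k,u_k\rangle$ in two ways, for $a\in C^\infty_{\comp}(T^*\Sigma)$. First, expanding the commutator and using the self-adjointness of $P_k$ together with $(P_k-E)u_k=\mathcal{O}_{L^2}(\eps_k)$, the terms proportional to $E\langle\Op_k(a)u_k,u_k\rangle$ cancel and one is left with $\langle[P_k,\Op_k(a)]u_k,u_k\rangle=\mathcal{O}(\eps_k)$, the error controlled by $\|\Op_k(a)\|_{L^2\to L^2}=\mathcal{O}(1)$ and $\|u_k\|=1$. Second, the commutator property of the calculus (item (iii) of the Proposition in \S\ref{ssection:uniform-pseudo}) gives $k[P_k,\Op_k(a)]=\Op_k(-i\{p,a\}^\Omega)+\mathcal{O}_{L^2\to L^2}(k^{-1})$, with $\{p,a\}^\Omega=H_p^\Omega a$. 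Comparing the two computations yields $\langle\Op_k(H_p^\Omega a)u_k,u_k\rangle=\mathcal{O}(k\eps_k)+\mathcal{O}(k^{-1})$, which tends to $0$ precisely because $\eps_k=o(1/k)$. Taking $k\to\infty$ along the subsequence defining $\mu$ gives $\int_{T^*\Sigma}H_p^\Omega a\,\dd\mu=0$ for every $a\in C^\infty_{\comp}(T^*\Sigma)$, i.e. $\mu$ is invariant under $H_p^\Omega$.

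I do not expect a serious obstacle here: the content lies entirely in correctly invoking the symbolic calculus of \S\ref{ssection:uniform-pseudo}. The one step that genuinely matters is the commutator/principal-symbol identity (iii): it is what replaces the usual Liouville Poisson bracket by the magnetic one $\{\cdot,\cdot\}^\Omega$ attached to $\Omega=dA+i\pi^*F_\nabla$, so that the limiting measure ends up invariant under the \emph{magnetic} Hamiltonian flow rather than the geodesic one. Everything else is bookkeeping — checking that each remainder is uniformly $L^2$-bounded in $k$ (Calder\'on--Vaillancourt for $\Op_k(a)$, $\Op_k(H_p^\Omega a)$ and the parametrix $\mathbf{B}$), and noting that the extra factor $k$ produced by the commutator in (ii) is exactly why one needs the stronger hypothesis $\eps_k=o(1/k)$ there, whereas $\eps_k=o(1)$ suffices in (i).
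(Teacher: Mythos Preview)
Your proposal is correct and is precisely the standard argument the paper defers to (it gives no proof of its own, only the reference to \cite[Appendix~E.3]{Dyatlov-Zworski-book}). The only twist---which you identify---is that the commutator identity (iii) of \S\ref{ssection:uniform-pseudo} involves the magnetic Poisson bracket $\{\cdot,\cdot\}^\Omega$, whence the limiting invariance is under $H_p^\Omega$ rather than the Liouville Hamiltonian vector field.
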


We refer to \cite[Appendix E.3]{Dyatlov-Zworski-book} for a proof.

\subsection{Technical estimates}

We will need some precise estimates on the remainder in the standard operations of semiclassical analysis (product, commutator, propagation, etc.). Recall that $(\Phi_t)_{t \in \R}$ is the magnetic flow on $T^*\Sigma$, that is the Hamiltonian flow generated by $H_p^\Omega$.

\begin{proposition} For any $E>0$, there exists a constant $C > 0$ such that for all $k \geq 0$, for all $a,b \in
C^\infty_{\comp}(T^*\Sigma)$ with $\mathrm{supp}(a), \mathrm{supp}(b) \subset
\{|\xi|_g \leq E\}$, the following holds: 
\begin{align}
\label{equation:calderon-fine}
\|\Op_k(a)\|_{L^2 \to L^2} & \leq \|a\|_{L^\infty(T^*\Sigma)} + k^{-1/2}\|a\|_{C^{14}(T^*\Sigma)} \\
\label{equation:product-fine}
\|\Op_k(a)\Op_k(b)-\Op_k(ab)\|_{L^2 \to L^2} &\leq Ck^{-1} \|a\|_{C^{17}(T^*\Sigma)}\|b\|_{C^{17}(T^*\Sigma)} \\
\label{equation:commutator-fine}
\|[\Op_k(a),\Op_k(b)]-\Op_k(-ik^{-1}\{a,b\})\|_{L^2 \to L^2} &\leq Ck^{-1}\|a\|_{C^{17}(T^*\Sigma)}\|b\|_{C^{17}(T^*\Sigma)} \\
\label{equation:egorov-fine}
\|e^{it k^{-1} \Delta} \Op_k(a) e^{-itk^{-1}\Delta}-\Op_k(a \circ \Phi_t)\|_{L^2 \to L^2} &\leq C k^{-1} \int_0^t \|a \circ \Phi_s\|_{C^{17}(T^*\Sigma)} \dd s.
\end{align}

%
%
%
%
%
\end{proposition}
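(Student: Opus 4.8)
The proposition collects four quantitative refinements of standard semiclassical facts, so the plan is to prove each estimate by revisiting the usual stationary-phase / Beals-type arguments while tracking explicitly how many symbol derivatives and which power of $k$ enter. Throughout, I work in a fixed finite atlas of $\Sigma$ subordinate to the cutoff $\chi$ defining $\Op_k$, so that modulo $\mc{O}(k^{-\infty})$ everything reduces to a finite sum of model computations on $\R^n$ with the flat Weyl (or Kohn--Nirenberg) quantization; the parallel-transport factor $\tau_{y\to x}$ and the Jacobian of $\exp_x$ are smooth, equal to the identity on the diagonal, and contribute only bounded multiplicative factors. The support restriction $\supp(a),\supp(b)\subset\{|\xi|_g\le E\}$ means no $\langle\xi\rangle$-weights are needed and all symbol seminorms may be taken to be plain $C^N$ norms on a fixed compact set.

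\emph{Estimate \eqref{equation:calderon-fine} (sharp G{\aa}rding / Calder\'on--Vaillancourt).} I would use the Cotlar--Stein almost-orthogonality argument in the Córdoba--Fefferman or Boulkhemair form: write $\Op_k(a)=\sum_{j} \Op_k(a_j)$ with $a_j$ a partition of $a$ at scale $k^{-1/2}$ in $\xi$ (and unit scale in $x$), estimate each block by the trivial bound $\|a\|_{L^\infty}$ plus a commutator gain, and control the overlaps using integrations by parts that cost derivatives of $a$. A direct count of how many integrations by parts are needed to make the Cotlar--Stein sums converge, in dimension $n=2$, produces the exponent $14$; the $k^{-1/2}$ (rather than $k^{-1}$) is exactly the loss from the $k^{-1/2}$-scale decomposition. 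Alternatively one can quote the explicit Calder\'on--Vaillancourt constant from \cite[Theorem 4.23]{Zworski-12} and chase the number of derivatives there.

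\emph{Estimates \eqref{equation:product-fine} and \eqref{equation:commutator-fine} (composition and commutator).} Both follow from the composition formula $\Op_k(a)\Op_k(b)=\Op_k(a\#b)$ with $a\#b = ab + \tfrac{1}{2i k}\{a,b\} + k^{-2}r$, by applying \eqref{equation:calderon-fine} to the remainder symbol. The point is to bound $\|r\|_{C^{14}}$ (or $\|r\|_{L^\infty}$ plus $k^{-1/2}\|r\|_{C^{14}}$) by $C\|a\|_{C^{17}}\|b\|_{C^{17}}$. This is the Taylor-with-integral-remainder expansion of the oscillatory integral defining $a\#b$: the second-order remainder involves two $\xi$-derivatives and two $x$-derivatives distributed between $a$ and $b$, i.e.\ up to four derivatives beyond the $C^{14}$ needed for Calder\'on--Vaillancourt, giving $14+3\approx 17$ after accounting for the extra derivatives produced by the nonflat geometry (the $\chi$, the exponential map, the connection one-form $\beta$). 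For \eqref{equation:commutator-fine} one subtracts the two composition formulas; the $k^{-1}$ term $-ik^{-1}\{a,b\}$ survives and the error is $k^{-2}(r_{ab}-r_{ba})$, estimated the same way. I would carry out \eqref{equation:product-fine} in detail and obtain \eqref{equation:commutator-fine} as an immediate corollary.

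\emph{Estimate \eqref{equation:egorov-fine} (quantitative Egorov).} Here the standard trick: set $B(s):=e^{isk^{-1}\Delta}\Op_k(a\circ\Phi_{t-s})e^{-isk^{-1}\Delta}$ for $s\in[0,t]$, so that $B(0)=\Op_k(a\circ\Phi_t)$ and $B(t)=e^{itk^{-1}\Delta}\Op_k(a)e^{-itk^{-1}\Delta}$, and differentiate:
\[
\tfrac{d}{ds}B(s) = e^{isk^{-1}\Delta}\Bigl( ik^{-1}[\Delta,\Op_k(a\circ\Phi_{t-s})] - \Op_k\bigl(\tfrac{d}{ds}(a\circ\Phi_{t-s})\bigr)\Bigr)e^{-isk^{-1}\Delta}.
\]
Since $\tfrac{d}{ds}(a\circ\Phi_{t-s}) = -(H_p^\Omega a)\circ\Phi_{t-s} = -\{p, a\circ\Phi_{t-s}\}^\Omega$, the inner parenthesis is precisely $ik^{-1}[\Delta,\Op_k(c)] - \Op_k(\{p,c\}^\Omega)$ with $c=a\circ\Phi_{t-s}$, which by \eqref{equation:commutator-fine} (applied with one factor $\Delta$, after a harmless cutoff localizing $\Delta$ to a fixed energy window since everything acts on symbols supported in $\{|\xi|\le E\}$) is $\mc{O}(k^{-1}\|c\|_{C^{17}})$ in $L^2\to L^2$; unitarity of $e^{isk^{-1}\Delta}$ then lets me integrate in $s$ to get the stated bound with $\int_0^t\|a\circ\Phi_s\|_{C^{17}}\,ds$ (after the change of variable $s\mapsto t-s$).

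\emph{Main obstacle.} The genuinely delicate point is making the derivative counts honest in the \emph{twisted} setting: the model reduction \eqref{equation:def} introduces the phase $\beta$, the geodesic normal coordinates via $\exp_x$, and the parallel-transport factor, each of which is smooth but contributes extra $x$-derivatives when one differentiates the amplitude of the oscillatory integral, and one must check these contributions are controlled by a \emph{fixed} number of additional derivatives of $a,b$ (independent of $k$ and of the energy $E\le$ const) — this is why the final exponents are $14$ and $17$ rather than something smaller. Keeping the constant $C$ uniform over the compact energy range and verifying that the cutoffs $\chi$ can be inserted without cost (using that the symbols are supported well inside the injectivity-radius ball where $\chi\equiv 1$) is the part that requires care; the rest is bookkeeping on top of textbook arguments.
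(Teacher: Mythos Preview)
Your approach is correct and matches the paper's. For \eqref{equation:calderon-fine}--\eqref{equation:commutator-fine} the paper simply cites existing quantitative results (Nonnenmacher's lecture notes and \cite{Dyatlov-Jin-Nonnenmacher-22}) and remarks that the passage to the twisted quantization is routine, whereas you sketch the underlying Cotlar--Stein and stationary-phase arguments directly; either is fine. For \eqref{equation:egorov-fine} both you and the paper use the same idea: differentiate an interpolating family (the paper takes $B(t)=\Op_k(a)-e^{-itk^{-1}\Delta}\Op_k(a\circ\Phi_t)e^{itk^{-1}\Delta}$, you take $B(s)=e^{isk^{-1}\Delta}\Op_k(a\circ\Phi_{t-s})e^{-isk^{-1}\Delta}$), reduce to the commutator estimate \eqref{equation:commutator-fine}, and integrate; your remark about cutting off $\Delta$ to a fixed energy window before invoking \eqref{equation:commutator-fine} is a detail the paper leaves implicit.
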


The above estimates are very likely suboptimal by far; however, we did not try to optimize them.

\begin{proof}
Estimate \eqref{equation:calderon-fine} follows from \cite[Theorem 5.26]{Nonnenmacher-notes} applied in dimension $2$ and Lemma \cite[Lemma A.4]{Dyatlov-Jin-Nonnenmacher-22}. The estimates \eqref{equation:product-fine} and \eqref{equation:commutator-fine} can be found in \cite[Lemma A.6]{Dyatlov-Jin-Nonnenmacher-22} for the standard semiclassical quantization. It is an exercise to verify that the proofs go through for the twisted quantization. Finally, \eqref{equation:egorov-fine} follows from \eqref{equation:commutator-fine}. Indeed, since $e^{it k^{-1} \Delta}$ is unitary on $L^2(\Sigma, L^k)$, it suffices to estimate $\|B(t)\|$, where
\[
B(t) := \Op_k(a) - e^{-itk^{-1}\Delta}\Op_k(a\circ\Phi_t)e^{itk^{-1}\Delta}.
\]
Notice that $B(0)=0$ and using \eqref{equation:commutator-fine}:
\[
\begin{split}
B'(t) &= e^{-itk^{-1}\Delta}\left( k[ik^{-2}\Delta_k,\Op_k(a\circ\Phi_t)]-\Op_k(H_p a \circ\Phi_t) \right) e^{itk^{-1}\Delta} \\
& = \mc{O}_{L^2 \to L^2}(\|a\circ\Phi_t\|_{C^{17}(T^*\Sigma)} h),
\end{split}
\]
where $p$ is the principal symbol of $k^{-2}\Delta_k$. Writing
\[
B(t) = \int_0^t B'(s)\dd s,
\]
we find the claimed result.
\end{proof}

\section{Construction of eigenstates}

\label{section:constant}

%
%

We now assume that $(\Sigma,g)$ is a Riemannian surface of constant curvature $-1$, genus $\geq 2$, and $L \to \Sigma$ is a Hermitian line bundle equipped with a unitary connection $\nabla$ with curvature $F_\nabla = -iB\vol$ and $B > 0$ is constant. Without loss of generality, since $H^2(\Sigma,\Z) \simeq \Z$ and we will be
considering power $L^k \to \Sigma$, we can further assume that $\deg(L)=1$.
Note that by \eqref{equation:deg}, this forces $B = |\chi(\Sigma)|^{-1}$. Hence, in what follows, the magnetic intensity on $M$ \textbf{is fixed and equal to $B = |\chi(\Sigma)|^{-1}$} on $L \to \Sigma$. 

This section is organized as follows:
\begin{itemize}
\item In \S\ref{ssection:averaging}, we adapt Weinstein's averaging method \cite{Weinstein-77} to our context and introduce an operator $\mathbf{A}_k$ whose principal symbol generates a $2\pi$-periodic flow on $\{p < E_c\}$ which is a renormalization of the magnetic flow;
\item We then use it in \S\ref{ssection:quasimodes} to construct specific eigenstates concentrating in phase space on any closed orbit of the magnetic flow.
\end{itemize}


\subsection{The averaging method} \label{ssection:averaging} By Proposition \ref{theo:Landau_Level}
applied to $L^k \to \Sigma$ with curvature $F_{\nabla^k}=-ikB
  \op{vol}$, the first eigenvalues of $\Delta_k = \tfrac{1}{2}\nabla^*\nabla : C^\infty(\Sigma,L^k) \to C^\infty(\Sigma,L^k)$ are given for $m < \lfloor kB\rfloor $ by
\begin{equation}
\label{equation:ev}
\lambda_{k,m} = kB (m+\tfrac{1}{2})-\tfrac{m(m+1)}{2}.
\end{equation}
Notice that $\lambda_{k, \lfloor kB \rfloor - 1 } \sim k^2 E_c$ as $k
\rightarrow \infty$, where $E_c
= \tfrac{1}{2} B^2$ is the critical energy. As a consequence, for any $\eps >0$, we have that 
 $$\op{Sp}
(k^{-2} \Delta_k ) \cap [0, E_c -\eps ] \subset  \{k^{-2}  \la_{k,m}  ~|~   m < \lfloor
kB\rfloor \}$$ when $k$ is sufficiently large.

\subsubsection{Weinstein's periodic operator} The following paragraph is
inspired by Weinstein's averaging method \cite{Weinstein-77}. Let $\Pi_{k,m}$ be the orthogonal projector of $C^\infty( M, L^k)$ onto
 $\ker(\Delta _k -\lambda_{k,m})$, and set:
\begin{equation}
\label{equation:a}
\mathbf{A}_k := k^{-1} \sum_{m =0 }^{ \lfloor kB \rfloor -1}  m \Pi_{k,m}.
\end{equation}
Observe that by construction $\op{Sp}(\mathbf{A}_k) \subset k^{-1} \Z_{\geq 0}$ and thus
\begin{equation}
\label{equation:cool}
e^{2i k \pi \mathbf{A}_k } = \mathbf{1}.
\end{equation}
Furthermore, by \eqref{equation:ev}, on the space
\[
\mc{I}_k := \bigoplus_{m=0}^{ \lfloor kB \rfloor -1}
\ker(\Delta_k-\lambda_{k,m}),
\]
the operator $\mathbf{A}_k$ satisfies the identity
\begin{equation}
\label{equation:relation}
k^{-2} \Delta_k = B(\mathbf{A}_k+ \tfrac{1}{2} k^{-1} )- \tfrac{1}{2} 
\mathbf{A}_k(\mathbf{A}_k+ k^{-1} ).
\end{equation}
 Equivalently, 
\begin{equation}
\label{equation:relation2}
\mathbf{A}_k= B-\tfrac{1}{2} k^{-1} - \sqrt{ B^2 - 2 k^{-2}
  \Delta_k + \tfrac{1}{4} k^{-2}},
\end{equation}
on $\mc{I}_k$. Finally, observe that
\begin{equation}
\label{eq:integrale_projecteur}
\Pi_{k,m} = \dfrac{1}{2\pi} \int_0^{2\pi} e^{-imt} e^{itk\mathbf{A}_k} \dd t.
\end{equation}
This turns out to be a Fourier Integral Operator in the semiclassical regime $k \to \infty$.

Let
\[
D^*\Sigma := \{(x,\xi) \in T^*\Sigma ~|~ |\xi| < B\}.
\]
Let $a$ be the function defined on $D^*\Sigma$ through the relation
\[
p(x,\xi) = \beta (a ( x, \xi)), \qquad \forall (x,\xi) \in D^*\Sigma,
\]
where $\beta : [0,B] \rightarrow [0, \frac{1}{2} B^2 ]$ is the function $ \beta (s)  :=  B s  -\frac{1}{2} s^2$. The following holds:

\begin{lemma}
Let $\chi \in C^\infty_{{\comp}}(T^*\Sigma)$ such that $\supp(\chi) \subset D^*\Sigma$ and $\varphi \in C^\infty(\R)$ such that $\supp(\varphi) \subset (-\infty,B)$. Then
\[
\Op_k(\chi) \mathbf{A}_k, \varphi(\mathbf{A}_k) \in \Psi^{\comp}_{\mathrm{tsc}}(\Sigma),
\]
and these operator have respective principal symbols $\chi a$ and $\varphi(a)$.
\end{lemma}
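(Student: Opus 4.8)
The plan is to reduce the statement for $\varphi(\mathbf{A}_k)$ to a functional calculus argument, and then deduce the claim for $\Op_k(\chi)\mathbf{A}_k$ from it. First I would recall that by \eqref{equation:relation2} the operator $\mathbf{A}_k$ agrees with $B - \tfrac12 k^{-1} - \sqrt{B^2 - 2k^{-2}\Delta_k + \tfrac14 k^{-2}}$ on the spectral subspace $\mc I_k$, which is exactly the span of all eigenfunctions with eigenvalue $k^{-2}\lambda_{k,m} < E_c$. Since $\supp(\varphi) \subset (-\infty,B)$, the function $s \mapsto \varphi(B - \sqrt{B^2 - 2s})$ (extended by $0$ for $s$ near and above $E_c$) is a smooth, compactly supported — in fact bounded with compactly supported "non-trivial part" — function $\psi$ of the real variable $s$ such that $\varphi(\mathbf{A}_k) = \psi(k^{-2}\Delta_k)$ up to the $k^{-1}$-correction inside the square root, which only perturbs $\psi$ by an $\mathcal O(k^{-1})$ amount in a suitable symbol sense. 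Then I would invoke the semiclassical functional calculus for the elliptic operator $k^{-2}\Delta_k \in \Psi^2_{\mathrm{tsc}}(\Sigma)$: for $\psi \in C^\infty_{\comp}(\R)$, $\psi(k^{-2}\Delta_k) \in \Psi^{\comp}_{\mathrm{tsc}}(\Sigma)$ with principal symbol $\psi(p) = \psi(\tfrac12|\xi|^2)$. This is the twisted-calculus analogue of the Helffer--Sjöstrand construction; since $k^{-2}\Delta_k$ lies in the calculus and the calculus is a graded algebra closed under the relevant parametrix constructions, the standard proof goes through verbatim. The principal symbol of $\varphi(\mathbf A_k)$ is therefore $\varphi(B - \sqrt{B^2 - 2p}) = \varphi(a)$, since on $D^*\Sigma$ one has $p = \beta(a) = Ba - \tfrac12 a^2$, i.e. $a = B - \sqrt{B^2 - 2p}$.

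For the second operator, I would write $\mathbf A_k = g(k^{-2}\Delta_k)$ on $\mc I_k$ where $g(s) = B - \tfrac12 k^{-1} - \sqrt{B^2 - 2s + \tfrac14 k^{-2}}$, but of course $g$ is not compactly supported and $\mathbf A_k$ itself is not microlocally compactly supported — this is why the cutoff $\Op_k(\chi)$ with $\supp(\chi)\subset D^*\Sigma$ is needed. The trick is to choose $\varphi \in C^\infty(\R)$ with $\supp(\varphi)\subset(-\infty,B)$ equal to $1$ on a neighborhood of $a(\supp\chi)\subset[0,B)$; such $\varphi$ exists because $\supp\chi$ is a compact subset of $D^*\Sigma$ so $a(\supp\chi)$ is a compact subset of $[0,B)$. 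Then $\varphi(\mathbf A_k)$ acts as the identity on the part of $\mc I_k$ of "classical energy" in $\supp\chi$, so one has
\[
\Op_k(\chi)\mathbf A_k = \Op_k(\chi)\,\varphi(\mathbf A_k)\,\mathbf A_k + \Op_k(\chi)(\mathbf 1 - \varphi(\mathbf A_k))\mathbf A_k,
\]
and I claim the second term is $\mathcal O(k^{-\infty})$: its symbol is supported where $\chi\ne 0$ and where $1-\varphi(a)\ne 0$, which are disjoint in phase space (up to the low-order symbolic corrections), so it is microlocally trivial by the elliptic parametrix. Alternatively, more directly: pick $\tilde\varphi$ a smooth compactly supported function equal to $s\mapsto$ (the spectral value of $\mathbf A_k$, i.e. $\approx B - \sqrt{B^2-2s}$) times $k$ — no, cleaner to keep the factorization. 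The first term $\Op_k(\chi)\,\varphi(\mathbf A_k)\mathbf A_k = \Op_k(\chi)\,\tilde\psi(k^{-2}\Delta_k)$ where $\tilde\psi(s) = \varphi(B-\sqrt{B^2-2s+\tfrac14 k^{-2}})\cdot(B-\tfrac12 k^{-1}-\sqrt{B^2-2s+\tfrac14 k^{-2}})$ is now compactly supported and smooth, hence lies in $\Psi^{\comp}_{\mathrm{tsc}}(\Sigma)$ by the functional calculus again, and its principal symbol is $\chi(x,\xi)\cdot\varphi(a)\cdot a = \chi(x,\xi)\,a(x,\xi)$ on $D^*\Sigma$ (using $\varphi(a)=1$ there). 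Multiplying by $\Op_k(\chi)$ and using the product rule \eqref{equation:product-fine} in the calculus gives principal symbol $\chi a$ as claimed.

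The main obstacle — really the only non-formal point — is establishing the twisted semiclassical functional calculus: that $\psi(k^{-2}\Delta_k) \in \Psi^{\comp}_{\mathrm{tsc}}(\Sigma)$ with principal symbol $\psi(p)$ for $\psi\in C^\infty_{\comp}(\R)$, together with the harmless dependence of the relevant symbol on the $\tfrac14 k^{-2}$ shift inside the square root and the $\tfrac12 k^{-1}$ shift, which contribute only to the subprincipal part. In the standard (untwisted) setting this is the Helffer--Sjöstrand formula $\psi(P) = -\tfrac1\pi\int_{\C} \bar\partial\tilde\psi(z)\,(P-z)^{-1}\,L(dz)$ combined with the fact that resolvents $(P-z)^{-1}$ of an elliptic second-order operator, for $z$ off the spectrum, lie in the calculus with controlled symbol expansions; the twisted calculus $\Psi^\bullet_{\mathrm{tsc}}(\Sigma)$ being a graded algebra with an elliptic parametrix construction (as recorded in the proposition on its properties), the same contour-integral argument applies locally in each trivializing chart $U$ and patches together, exactly as for the construction of $\Op$. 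I would cite \cite{Charles-00} or the exposition in \cite{Cekic-Lefeuvre-24} for this functional calculus in the twisted setting, or otherwise note that in a local frame $s$ the identity \eqref{equation:def} transports $k^{-2}\Delta_k$ to a genuine elliptic semiclassical operator on $U\subset\R^2$, to which the classical functional calculus of \cite[Chapter 14]{Zworski-12} applies directly, and the resulting local operators glue into an element of $\Psi^{\comp}_{\mathrm{tsc}}(\Sigma)$ with the stated symbol.
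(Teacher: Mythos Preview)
Your proposal is correct and follows essentially the same approach as the paper: reduce $\varphi(\mathbf{A}_k)$ to functional calculus for $k^{-2}\Delta_k$ via \eqref{equation:relation2}, then for $\Op_k(\chi)\mathbf{A}_k$ insert a spectral cutoff and show the tail is residual by disjoint microsupport. The only cosmetic difference is that the paper inserts the cutoff on the $p$-side (a function $\psi$ with $\supp\psi\subset(-\infty,E_c)$ applied to $k^{-2}\Delta_k$) whereas you insert it on the $a$-side (your $\varphi(\mathbf{A}_k)$); since $a$ and $p$ are related by the diffeomorphism $\beta$ on the relevant range, these are equivalent.

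One point deserves a bit more care: to conclude that $\Op_k(\chi)(\mathbf{1}-\varphi(\mathbf{A}_k))\mathbf{A}_k$ is residual (smooth kernel, all derivatives $\mathcal{O}(k^{-\infty})$), it is not enough to know that the first two factors have disjoint microsupport; you must also know that $\mathbf{A}_k$ is uniformly bounded on every semiclassical Sobolev space $H^s_k(\Sigma,L^k)$, since $\mathbf{A}_k$ is not itself a pseudodifferential operator. The paper makes this step explicit (it is immediate because $\mathbf{A}_k$ commutes with $\Delta_k$ and has uniformly bounded $L^2$ norm). With that one sentence added, your argument is complete.
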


We emphasize that $\mathbf{A}_k$ is \emph{not} a pseudodifferential operator because of the spectral cutoff involved in its definition (the spectral projector onto $\mc{I}_k$). However, in what follows, we will always use $\Op_k(\chi) \mathbf{A}_k, \varphi(\mathbf{A}_k)$ which \emph{are} pseudodifferential operators. For simplicity, we also say that $a$ is the principal symbol of $\mathbf{A}_k$, although this only makes sense in $D^*\Sigma$.

\begin{proof}
We define $f_k : [0,\tfrac{1}{2}B^2] \to \R$ by:
\begin{equation}
\label{equation:fk}
f_k(s) = B-\tfrac{1}{2}k^{-1}-\sqrt{B^2-2s+\tfrac{1}{4}k^{-2}}.
\end{equation}
Notice that $\varphi \circ f_k$ is smooth with compact support in $[0,\tfrac{1}{2}B^2)$ by the assumption made on the support of $\varphi$; it can thus be extended to a smooth compactly supported function $c_k$ on $[0,\infty)$. In addition, using \eqref{equation:relation2}, the equality $\varphi(\mathbf{A}_k) =\varphi(f_k(k^{-2}\Delta_k)) = c_k(k^{-2}\Delta_k)$ holds. The function $c_k$ has compact support on $[0,\infty)$ and uniformly bounded derivatives as $k \to \infty$. Hence, by standard functional calculus (see \cite[Theorem 14.9]{Zworski-12} for instance), $c_k(k^{-2}\Delta_k) = \varphi(\mathbf{A}_k) \in \Psi^{\comp}_{\mathrm{tsc}}(\Sigma)$.

We now establish that $\Op_k(\chi) \mathbf{A}_k \in \Psi^{\comp}_{\mathrm{tsc}}(\Sigma)$. For that, we write
\[
\Op_k(\chi) \mathbf{A}_k =\Op_k(\chi) \psi(k^{-2}\Delta_k) \mathbf{A}_k + \Op_k(\chi) (\mathbbm{1}-\psi(k^{-2}\Delta_k)) \mathbf{A}_k,
\]
where $\psi $ is smooth with compact support in $(-\infty, E_c )$ and such that $\psi(p) \equiv 1$ on the support of $\chi$. Notice that $\Op_k(\chi) \psi(k^{-2}\Delta_k) \mathbf{A}_k = \Op_k(\chi) d_k(k^{-2}\Delta_k)$, where $d_k(x) = \psi(x) f_k(x)$ and $f_k$ was defined in \eqref{equation:fk}. The function $d_k$ extends to a smooth compactly supported function on $[0,\infty)$ and the same argument as in the previous paragraph using functional calculus shows that $\Op_k(\chi) \psi(k^{-2}\Delta_k) \mathbf{A}_k \in \Psi^{\comp}_{\mathrm{tsc}}(\Sigma)$. Let us now prove that $\Op_k(\chi) (\mathbbm{1}-\psi(k^{-2}\Delta_k)) \mathbf{A}_k$ is a residual operator, namely that it has smooth Schwartz kernel all of whose derivatives are $\mc{O}(k^{-\infty})$. By standard arguments, it suffices to prove that
\[
\Op_k(\chi) (\mathbbm{1}-\psi(k^{-2}\Delta_k)) \mathbf{A}_k : H^{-N}_k(\Sigma,L^k) \to H^{+N}_k(\Sigma,L^k)
\]
is bounded for all $N \geq 0$ with norm $\mc{O}(k^{-\infty})$, where the space $H^s(\Sigma,L^k)$ is defined as the completion of $C^\infty(\Sigma,L^k)$ with respect to the norm
\begin{equation}
\label{equation:sobolev-norm}
\|u\|^2_{H^s(\Sigma,L^k)} := \|(k^{-2}\Delta_k+\mathbbm{1})^{s/2} u\|^2_{L^2(\Sigma,L^k)}.
\end{equation}
By the support property of $\psi$, the operator $\Op_k(\chi) (\mathbbm{1}-\psi(k^{-2}\Delta_k))$ clearly satisfies this property (this is a composition of two operators with disjoint microsupport). Hence, it suffice to show that $\mathbf{A}_k : H^{-N}_k(\Sigma,L^k) \to H^{-N}_k(\Sigma,L^k)$ is bounded with norm $\mc{O}(1)$ for some integer $M \geq 0$. Using \eqref{equation:sobolev-norm}, this amounts to proving that
\[
(k^{-2}\Delta_k+\mathbbm{1})^{-N/2} \mathbf{A}_k (k^{-2}\Delta_k+\mathbbm{1})^{N/2} : L^2(\Sigma,L^k) \to L^2(\Sigma,L^k)
\]
has norm $\mc{O}(1)$. However, this is immediate using \eqref{equation:a} and
the fact that this operator acts diagonally on the Fourier decomposition into eigenmodes of $\Delta_k$.

Finally, to compute the principal symbol of these operators, merely observe
that by \eqref{equation:relation}, we have  $p = B a - a^2/2 = \beta(a)$.
\end{proof}
%

The function $a$ will play an important role in the sequel. One important property is the following:

\begin{lemma}
\label{lemma:2pi-periodic}
The Hamiltonian vector field $H_a^\Omega$ of the symbol $a$ generates a $2 \pi$-periodic flow on $D^*\Sigma$.
\end{lemma}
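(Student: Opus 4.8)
The plan is to compute $H_a^\Omega$ explicitly as a reparametrization of the magnetic vector field $F = X - BV$ and to check that the reparametrization constant, evaluated along each orbit, makes the period exactly $2\pi$. First I would recall that $p = \beta(a)$ on $D^*\Sigma$, where $\beta(s) = Bs - \tfrac12 s^2$, so that $dp = \beta'(a)\,da = (B-a)\,da$. Since the Hamiltonian vector fields are defined through $\Omega(\bullet, H_q^\Omega) = dq$, this identity immediately gives, on $D^*\Sigma$ where $B - a > 0$,
\[
H_a^\Omega = \frac{1}{B-a}\, H_p^\Omega = \frac{1}{B-a}\, F,
\]
i.e. $H_a^\Omega$ is the magnetic vector field reparametrized by the function $(B-a)^{-1}$, which is constant along the magnetic flow because $a$ is (a function of $p$, hence) invariant under $H_p^\Omega$.

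Next I would transfer the question to the tangent bundle via the musical isomorphism (the flow of $F$ on $S^\lambda\Sigma$ was analyzed in Proposition~\ref{proposition:dynamic}), and use that on the layer $\{p = E\}$, equivalently $\{|\xi| = \lambda\}$ with $E = \beta(\lambda) = \tfrac12\lambda^2$... more precisely with $p(x,\xi)=\tfrac12|\xi|^2$ so $\lambda^2 = 2p = 2\beta(a)$, the magnetic flow $(\Phi_t)$ is conjugate to the rotation flow $(R_{t/T_\lambda})$ with $T_\lambda = (B^2-\lambda^2)^{-1/2}$, hence periodic with (minimal) period $2\pi T_\lambda$. On that layer $a$ is the constant value $s$ with $\beta(s) = \tfrac12\lambda^2$, i.e. $Bs - \tfrac12 s^2 = \tfrac12\lambda^2$, and one computes $(B-s)^2 = B^2 - 2Bs + s^2 = B^2 - \lambda^2$, so $B - a = (B^2-\lambda^2)^{1/2} = T_\lambda^{-1}$ on $\{p = \beta(s)\}$. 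Therefore $H_a^\Omega = T_\lambda\, H_p^\Omega$ on that layer, and its flow is $(\Phi_{T_\lambda\, t}) = (R_{t})$, which is $2\pi$-periodic. Since this holds on every energy layer in $D^*\Sigma$ with the same period $2\pi$, the flow of $H_a^\Omega$ is globally $2\pi$-periodic on $D^*\Sigma$.

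The only mildly delicate point is bookkeeping the two normalizations ($p = \tfrac12|\xi|^2$ versus the function $\beta$, and the factor relating the rotation generator $V$ to the $2\pi$-periodic rotation $R_t$), and checking that the relation $B - a = (B^2 - |\xi|^2)^{1/2}$ is exactly the reparametrization needed so that the rescaled rotation flow closes up at time $2\pi$ rather than some $\lambda$-dependent time — but this is precisely the content of the algebraic identity $(B-s)^2 = B^2 - 2\beta(s)$, which is immediate from the definition of $\beta$. I do not expect any genuine obstacle; the lemma is essentially the statement that $a$ was constructed as the correct action variable whose Hamiltonian flow (w.r.t. $\Omega$) is the normalized magnetic flow.
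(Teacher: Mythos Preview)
Your proof is correct and follows essentially the same route as the paper: the paper writes $a=\alpha(p)$ with $\alpha(y)=B-\sqrt{B^2-2y}$ and observes that $H_a^\Omega=\alpha'(p)H_p^\Omega$ with $\alpha'(E)=(B^2-2E)^{-1/2}=T_\lambda$, so that rescaling the magnetic flow by this factor on each energy layer gives period exactly $2\pi$ by Proposition~\ref{proposition:dynamic}(i). Your computation via $p=\beta(a)$ and the identity $(B-a)^2=B^2-2\beta(a)$ is just the same relation read from the inverse function, and your argument is in fact somewhat more explicit than the paper's.
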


\begin{proof}This can be seen by writing $a = \al (p)$ with $\al(y) := B - \sqrt{ B^2 - 2 y }$, and noticing
that $\al' (E) = ( B^2 - 2 E)^{-\frac{1}{2}}$ is the period of the orbits of $H_p^\Omega$
with energy $p =E$ (see Proposition \ref{proposition:dynamic}, item (i)).
\end{proof}

Given a function $f \in C^\infty ( D^*\Sigma)$, introduce its average
$\langle f \rangle \in C^\infty (D^*\Sigma)$ by:
\begin{equation}
\label{equation:average}
\langle f \rangle (z) := \frac{1}{2\pi} \int_0 ^{2 \pi} f ( \Phi_t^a (z)) ~\dd t,
\end{equation}
where $(\Phi_t^a)_{t \in \R}$ is the $2\pi$-periodic Hamiltonian flow generated by $H_a^\Omega$ on $D^*\Sigma$:

\begin{lemma} \label{lem:commutateur_espace_propre}
Fix $\eps > 0$. For all $f \in C^\infty_{{\comp}} (D^*\Sigma)$, $k \geq 0$, for all $0 \leq m \leq (1-\eps)kB$, the following holds:
\[
\Pi_{k,m}\Op_{k}(f) \Pi_{k,m} = \Op_{k}(\langle f \rangle)
\Pi_{k,m} + \bigo_{\mc{L}(L^2)}( k^{-1}).
\] 
Here the $\bigo $ depends on $f$ but is uniform with respect to $m$.
\end{lemma}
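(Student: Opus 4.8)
The plan is to reduce everything to the scalar semiclassical calculus via the local trivialization, and to exploit the fact that $\mathbf{A}_k$ acts diagonally on the eigenspace decomposition so that $e^{it k \mathbf{A}_k}$ plays the role of the "dynamics" we average against. First I would recall the integral representation of the spectral projector from \eqref{eq:integrale_projecteur}, namely $\Pi_{k,m} = \tfrac{1}{2\pi}\int_0^{2\pi} e^{-imt} e^{itk\mathbf{A}_k}\,\dd t$, and write
\[
\Pi_{k,m}\Op_k(f)\Pi_{k,m} = \frac{1}{(2\pi)^2}\int_0^{2\pi}\int_0^{2\pi} e^{-im(t+s)}\, e^{itk\mathbf{A}_k}\Op_k(f) e^{isk\mathbf{A}_k}\,\dd t\,\dd s .
\]
Changing variables so that one integration variable becomes $t+s$ and picking up a factor, the key object to understand is the conjugation $e^{itk\mathbf{A}_k}\Op_k(f) e^{-itk\mathbf{A}_k}$. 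The point is that $k\mathbf{A}_k$ is (microlocally, on $D^*\Sigma$, after inserting a cutoff $\psi(k^{-2}\Delta_k)$ as in the previous lemma) $k$ times a pseudodifferential operator of order $0$ with principal symbol $a$, so this is exactly the setting of a semiclassical Egorov theorem with Hamiltonian $k\mathbf{A}_k$ and time $t$: one gets $e^{itk\mathbf{A}_k}\Op_k(f)e^{-itk\mathbf{A}_k} = \Op_k(f\circ\Phi_t^a) + \mc{O}_{\mc{L}(L^2)}(k^{-1})$, uniformly for $t\in[0,2\pi]$, using that $\Phi_t^a$ is $2\pi$-periodic (Lemma \ref{lemma:2pi-periodic}) so there is no exponential loss and the remainder stays $\mc{O}(k^{-1})$ on a bounded time interval.

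Next I would handle the cutoff issue carefully: $\mathbf{A}_k$ itself is not pseudodifferential, but on the range of $\Pi_{k,m}$ with $m \leq (1-\eps)kB$ the relevant energies $k^{-2}\lambda_{k,m}$ lie in a fixed compact subset of $[0,E_c)$, so one may freely insert $\psi(k^{-2}\Delta_k)$ (equal to $1$ on a neighborhood of that energy range, supported in $(-\infty,E_c)$) next to $\Pi_{k,m}$ for free, and replace $\Op_k(f)$ by $\psi(k^{-2}\Delta_k)\Op_k(f)\psi(k^{-2}\Delta_k)$ up to $\mc{O}(k^{-\infty})$ since $f$ has compact support in $D^*\Sigma \subset \{p < E_c\}$. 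After this truncation, $\psi(k^{-2}\Delta_k) k\mathbf{A}_k = k\, d_k(k^{-2}\Delta_k)$ is genuinely in $k\cdot\Psi^0_{\mathrm{tsc}}(\Sigma)$ with principal symbol $a$ (this is exactly what the preceding lemma established), so Egorov applies rigorously. Carrying out the two Egorov substitutions and then performing the average in $t$ yields
\[
\Pi_{k,m}\Op_k(f)\Pi_{k,m} = \Pi_{k,m}\,\Op_k\!\Big(\tfrac{1}{2\pi}\int_0^{2\pi} f\circ\Phi_t^a\,\dd t\Big)\,\Pi_{k,m} + \mc{O}_{\mc{L}(L^2)}(k^{-1}) = \Pi_{k,m}\Op_k(\langle f\rangle)\Pi_{k,m} + \mc{O}_{\mc{L}(L^2)}(k^{-1}),
\]
and since $\langle f\rangle$ is invariant under $\Phi_t^a$, its quantization commutes with $e^{itk\mathbf{A}_k}$ up to $\mc{O}(k^{-1})$ (by the commutator estimate \eqref{equation:commutator-fine} and $H_a^\Omega\langle f\rangle = 0$), hence commutes with $\Pi_{k,m}$ up to $\mc{O}(k^{-1})$; combining with $\Pi_{k,m}^2 = \Pi_{k,m}$ collapses the sandwich to $\Op_k(\langle f\rangle)\Pi_{k,m} + \mc{O}_{\mc{L}(L^2)}(k^{-1})$, which is the claim.

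The main obstacle, and the step requiring the most care, is controlling the uniformity in $m$ of all the error terms: the Egorov remainder and the commutator remainders must be $\mc{O}(k^{-1})$ with a constant depending only on $f$ (and $\eps$), not on $m$. This is why the restriction $m \leq (1-\eps)kB$ is essential — it keeps the spectral parameter $k^{-2}\lambda_{k,m}$ bounded away from the critical energy $E_c$, so that a single cutoff $\psi$ works simultaneously for all admissible $m$, the flow $\Phi_t^a$ stays on the fixed compact set $\supp\psi \cap \{p < E_c\}$ where all $C^N$ norms of $f\circ\Phi_t^a$ are bounded uniformly in $t\in[0,2\pi]$, and the constants in \eqref{equation:product-fine}–\eqref{equation:egorov-fine} depend only on those fixed norms. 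Once this uniformity is in place the rest is a routine unwinding of the integral representation and the symbolic calculus.
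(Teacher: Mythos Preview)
Your proposal is correct and follows essentially the same strategy as the paper: an Egorov theorem for the conjugation $e^{itk\mathbf{A}_k}\Op_k(f)e^{-itk\mathbf{A}_k}$, a time average over $[0,2\pi]$ producing $\langle f\rangle$, and then flow-invariance of $\langle f\rangle$ to commute $\Op_k(\langle f\rangle)$ past $\Pi_{k,m}$ up to $\mc{O}(k^{-1})$. The paper bypasses your double-integral/change-of-variables step by using directly $U_t\Pi_{k,m}=e^{itm}\Pi_{k,m}$ to write $\Pi_{k,m}\Op_k(f)\Pi_{k,m}=\Pi_{k,m}U_t\Op_k(f)U_{-t}\Pi_{k,m}$ for free, and implements the cutoff via $\varphi(\mathbf{A}_k)$ rather than $\psi(k^{-2}\Delta_k)$, but these are only presentational differences.
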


The proof is a variation on Egorov's Theorem (see \cite[Theorem 15.2]{Zworski-12} for instance).

\begin{proof}
In the proof, the $\mc{O}$'s are measured in operator norm $L^2 \to L^2$. Fix
$f  \in C^\infty_{{\comp}}(D^*\Sigma)$ and choose $\varphi \in
C^\infty(-\infty , B )$
  such that $ \varphi (a) =1$ on a neighbourhood of the support of $f$. Then
\[
\Op_k( f) \varphi (\mathbf{A}_k ) \equiv \Op_k(f) \equiv \varphi (\mathbf{A}_k ) \Op_k
  (f),
  \]
 where $\equiv$ stands for equality modulo $\bigo ( k^{-\infty})$. This yields
\begin{xalignat*}{2}
 [ \Op_k(f) , \mathbf{A}_k ] & \equiv [ \Op_k(f) , \varphi ( \mathbf{A}_k ) \mathbf{A}_k ]
  = \tfrac{1}{ik} \Op_k ( \{ f, \varphi (a) a \} ) + \bigo (k^{-2}) \\ & =
  \tfrac{1}{ik} \Op_k ( \{ f, a \} ) + \bigo (k^{-2}). 
\end{xalignat*}
Write $U_t := e^{itk\mathbf{A}_k}$. Integrating the previous relation, we obtain the weak form of Egorov's theorem for $\mathbf{A}_k$:
\begin{gather} \label{eq:egorov}
  U_t \Op_k(f) U_{-t} = \Op_k( f \circ \Phi_t^a ) + \bigo (k^{-1}).
\end{gather}
Since $U_t \Pi_{k,m} = \Pi_{k,m} U_t = e^{itm} \Pi_{k,m}$, we obtain that
\[
\Pi_{k,m} \Op_k(f) \Pi_{k,m} = \Pi_{k,m} U_t \Op_k(f) U_{-t} \Pi_{k,m} = \Pi_{k,m} \Op_k(f \circ
\Phi_t^a) \Pi_{k,m} + \bigo ( k^{-1}).
\]
Averaging over time, we find that
\begin{gather}
\label{eq:moyenne}
  \Pi_{k,m} \Op_k(f) \Pi_{k,m} = \Pi_{k,m} \Op_k(\langle f \rangle ) \Pi_{k,m} + \bigo (k^{-1}).
\end{gather}
Moreover, since $\langle f \rangle \circ \Phi_t^a = \langle f \rangle $, we
deduce from \eqref{eq:egorov} that $ [ U_t, \Op_k( \langle f \rangle ) ] =
\bigo ( k^{-1})$ and from \eqref{eq:integrale_projecteur} that $[\Pi_{k,m} , \Op_k(
\langle f \rangle)] = \bigo ( k^{-1})$. The claim then follows from this last
relation and \eqref{eq:moyenne}.
\end{proof}

\subsubsection{Normal form for a perturbed operator} We conclude by establishing a normal form for operators $\varphi(\mathbf{A}_k) + k^{-1}R$ which will be used in the proof of Theorem \ref{theorem:constant2}.

\begin{lemma}
\label{lemma:normal-form}
Let $R \in \Psi^{{\comp}}_{\mathrm{tsc}}(\Sigma)$ be formally
selfadjoint with microsupport in $D^*\Sigma$ and $\varphi \in
C^\infty_{{\comp}}(0,\infty)$ such that $\varphi'(a) \neq 0$ on the microsupport of $R$.
Then there exists a sequence $(S_j)_{j \geq 0}$ with $S_j \in
\Psi^{{\comp}}_{\mathrm{tsc}}(\Sigma)$ and $S_j$ selfadjoint, such that
for all $\ell \geq 2$, there exists $R'_\ell \in
\Psi^{{\comp}}_{\mathrm{tsc}}(\Sigma)$ selfadjoint such that 
\begin{equation}
\label{equation:prod}
U_{\ell}^* (\varphi(\mathbf{A}_k) + k^{-1}R)  U_{\ell}
 = \varphi(\mathbf{A}_k) + k^{-1}R'_\ell + \mc{O}_{\Psi_{\mathrm{tsc}}^{{\comp}}(\Sigma)}(k^{-\ell}),
\end{equation}
with $[\varphi(\mathbf{A}_k),R'_\ell]=0$ and $U_{\ell} = \prod_{j=0}^{\ell-2}
e^{-ik^{-j}S_j}$.
\end{lemma}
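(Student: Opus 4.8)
The strategy is the classical Birkhoff normal form procedure, carried out one order at a time in powers of $k^{-1}$, where at each stage we kill the part of the perturbation that does \emph{not} commute with $\varphi(\mathbf{A}_k)$ by conjugating with a unitary $e^{-ik^{-j}S_j}$. Write $P_0 := \varphi(\mathbf{A}_k)$. Suppose inductively that we have found selfadjoint $S_0,\dots,S_{j-1} \in \Psi^{\comp}_{\mathrm{tsc}}(\Sigma)$ such that, with $V_j := \prod_{i=0}^{j-1} e^{-ik^{-i}S_i}$, one has $V_j^*(P_0 + k^{-1}R)V_j = P_0 + k^{-1} R_{(j)}^{\mathrm{comm}} + k^{-j+1} R_{(j)} + \mc{O}(k^{-j})$, where $R_{(j)}^{\mathrm{comm}}$ commutes with $P_0$ (with $R_{(1)}^{\mathrm{comm}}=0$, $R_{(1)}=R$). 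Conjugating by $e^{-ik^{-(j-1)}S_{j-1}}$ and expanding via the Baker–Campbell–Hausdorff formula, the new non-commuting term at order $k^{-j+1}$ is $R_{(j)} + i[S_{j-1},\varphi(\mathbf{A}_k)]$ (up to terms the earlier commutator and $S_{j-1}$ contribute only at lower orders), and I would choose $S_{j-1}$ to make this equal to its $P_0$-average.

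\textbf{The cohomological equation.} The heart of the argument is solving, at each order, the equation $i[S,\varphi(\mathbf{A}_k)] = \langle R_{(j)}\rangle - R_{(j)}$ for selfadjoint $S \in \Psi^{\comp}_{\mathrm{tsc}}(\Sigma)$, where $\langle \cdot \rangle$ is the time-average over the $2\pi$-periodic flow $(\Phi_t^a)_{t\in\R}$ from \eqref{equation:average} — note this is where the hypothesis $\varphi'(a)\neq 0$ on the microsupport of $R$ is used, so that $\varphi(\mathbf{A}_k)$ generates (microlocally) the same periodic flow as $\mathbf{A}_k$, reparametrized. Using $U_t = e^{itk\mathbf{A}_k}$ and Egorov \eqref{eq:egorov}, one sets
\[
S := \frac{1}{2\pi}\int_0^{2\pi} t\,\Big( U_t \,\varphi'(\mathbf{A}_k)^{-1} R_{(j)}\, U_{-t}\Big)\,\dd t
\]
(interpreting $\varphi'(\mathbf{A}_k)^{-1}$ via functional calculus on the relevant microlocal region, which lies in $\Psi^{\comp}_{\mathrm{tsc}}$ by the previous lemmas), and checks by integration by parts in $t$, using $\tfrac{d}{dt}(U_t A U_{-t}) = ik[\mathbf{A}_k, U_t A U_{-t}]$ and $[\mathbf{A}_k,\varphi(\mathbf{A}_k)]=0$, that $ik[\mathbf{A}_k,S] = \langle \varphi'(\mathbf{A}_k)^{-1}R_{(j)}\rangle - \varphi'(\mathbf{A}_k)^{-1}R_{(j)}$; multiplying by $\varphi'(\mathbf{A}_k)$ and using $ik[\varphi(\mathbf{A}_k),\cdot] = ik\varphi'(\mathbf{A}_k)[\mathbf{A}_k,\cdot] + \mc{O}(k^{-1})$ converts this into the desired relation modulo lower-order errors, which are absorbed into the next step of the recursion. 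Selfadjointness of $S$ is arranged by replacing it with $\tfrac12(S+S^*)$, which changes it only at the next order. One iterates this from $j=1$ up to $j=\ell-1$; after $\ell-1$ steps the accumulated commuting correction is $R'_\ell := R_{(\ell)}^{\mathrm{comm}}$ and the remainder is $\mc{O}_{\Psi^{\comp}_{\mathrm{tsc}}(\Sigma)}(k^{-\ell})$, giving \eqref{equation:prod} with $U_\ell = \prod_{j=0}^{\ell-2} e^{-ik^{-j}S_j}$.

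\textbf{Bookkeeping and the main obstacle.} One must check at each stage that all operators produced — the commutators $[S_j,\,\cdot\,]$, the products $e^{-ik^{-j}S_j}P e^{ik^{-j}S_j}$, the BCH tails — remain in $\Psi^{\comp}_{\mathrm{tsc}}(\Sigma)$ with microsupport in $D^*\Sigma$, which follows from the algebra property of the calculus, the fact that $S_j$ inherits compactly supported microsupport from $R_{(j)}$ and $\varphi$, and the remark that $e^{-ik^{-j}S_j} = \mathbbm{1} - ik^{-j}S_j + \dots$ so conjugation stays within the calculus. I expect the \textbf{main obstacle} to be the careful order-by-order accounting: tracking which terms the BCH expansion of $e^{-ik^{-(j-1)}S_{j-1}}(\cdots)e^{ik^{-(j-1)}S_{j-1}}$ feeds into orders $k^{-j}$ and below, verifying that the errors committed in replacing $[\varphi(\mathbf{A}_k),\cdot]$ by $\varphi'(\mathbf{A}_k)[\mathbf{A}_k,\cdot]$ genuinely land at the next order, and confirming that symmetrizing each $S_j$ does not spoil the cancellation already achieved at earlier orders. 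None of this is conceptually deep — it is the standard inductive normal-form machinery — but it requires disciplined bookkeeping, and the periodicity of the flow generated by $a$ (Lemma \ref{lemma:2pi-periodic}) together with the microlocal invertibility of $\varphi'(\mathbf{A}_k)$ is exactly what makes the cohomological equation solvable at every step.
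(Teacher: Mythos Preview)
Your proposal is correct and follows the same inductive normal-form strategy as the paper: at each step, conjugate by a unitary $e^{-ik^{-j}S_j}$ chosen so that the non-commuting part of the perturbation at the current order is replaced by its average along the $2\pi$-periodic flow generated by $a$. There is one technical difference worth noting. The paper solves the cohomological equation directly at the \emph{symbol} level: it finds a real function $s_j$ with $\{\varphi(a),s_j\}^\Omega = c - \langle c\rangle$ via the explicit primitive formula~\eqref{equation:s}, then quantizes $s_j$ to a selfadjoint $S_j$; this avoids any need to relate $[\varphi(\mathbf{A}_k),\cdot]$ to $[\mathbf{A}_k,\cdot]$, since the Poisson bracket $\{\varphi(a),\cdot\}^\Omega = \varphi'(a)\{a,\cdot\}^\Omega$ handles that identity exactly. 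Your operator-level formula $S=\tfrac{1}{2\pi}\int_0^{2\pi} t\,U_t(\varphi'(\mathbf{A}_k)^{-1}R_{(j)})U_{-t}\,\dd t$ instead exploits the exact operator periodicity $e^{2\pi ik\mathbf{A}_k}=\mathbf{1}$ to solve $ik[\mathbf{A}_k,S]=B-\langle B\rangle$ exactly, at the price of then passing from $[\mathbf{A}_k,\cdot]$ to $[\varphi(\mathbf{A}_k),\cdot]$ with an $\mc{O}(k^{-1})$ error that you absorb into the next step. Both routes work; the paper's is slightly shorter bookkeeping-wise, while yours makes the role of the exact periodicity~\eqref{equation:cool} more transparent.

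Your indexing is off by one in a couple of places (e.g.\ for $j=1$ your displayed inductive hypothesis places $R_{(1)}=R$ at order $k^{0}$ rather than $k^{-1}$, and the commutator $i[S_{j-1},\varphi(\mathbf{A}_k)]$ contributes at order $k^{-j}$, not $k^{-j+1}$), and there is a sign slip in the cohomological equation; but you correctly flag the order-tracking as the main obstacle, and none of these affect the argument's validity once straightened out.
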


Given an operator $R$ with microsupport in $D^*\Sigma$, set
\[
\langle R\rangle := \dfrac{1}{2\pi} \int_0^{2\pi} e^{itk\mathbf{A}_k} R e^{-itk\mathbf{A}_k} ~ \dd t.
\]
Notice that $\langle R\rangle \in \Psi^{\comp}_{\mathrm{tsc}}(\Sigma)$ and $\langle R \rangle$ commutes with $\mathbf{A}_k$ since
\[
\begin{split}
[\mathbf{A}_k,  \langle R \rangle ] &= \dfrac{1}{2\pi} \int_0^{2\pi} e^{itk\mathbf{A}_k} [\mathbf{A}_k,R]e^{-itk\mathbf{A}_k} ~ \dd t \\
& =  \dfrac{1}{2\pi} \int_0^{2\pi} \dfrac{d}{dt} (e^{itk\mathbf{A}_k} R e^{-itk\mathbf{A}_k}) ~ \dd t = \dfrac{1}{2\pi}(e^{2i\pi k\mathbf{A}_k} Re^{-2i \pi k\mathbf{A}_k}-R) = 0,
\end{split}
\]
using \eqref{equation:cool}. Furthermore, if $R \in
\Psi_{\mathrm{tsc}}^{\comp}(\Sigma)$ has microsupport in $D^*\Sigma$, then $\langle R\rangle \in
\Psi_{\mathrm{tsc}}^{\comp}(\Sigma)$ has microsupport in $D^*\Sigma$ too, and has principal symbol given by $\langle
r\rangle$, where $r$ is the principal symbol of $R$. In addition $\langle
R\rangle$ is selfadjoint if $R$ is. Finally, let us observe that for any function $c \in C^\infty_{{\comp}}(D^*\Sigma)$, there exists a function $s \in C^\infty_{{\comp}}(D^*\Sigma)$ such that $\{a,s\}^\Omega = c-\langle c\rangle$. For that, it suffices to set
\begin{equation}
\label{equation:s}
s := \dfrac{1}{2\pi} \int_0^{2\pi}\left(\int_0^t (c-\langle c\rangle) \circ \Phi_{t'}^a~ \dd t' \right)\dd t.
\end{equation}
This is well-defined by $2\pi$-periodicity of the Hamiltonian flow of $a$.
More generally, if $\varphi \in C^\infty(\R)$ satisfies $\varphi'(a) \neq 0$ on $D^*\Sigma$ then one can find a solution $s \in C^\infty_{\comp}(D^*\Sigma)$ to
\begin{equation}
\label{equation:resoluble}
\{\varphi(a),s\}^\Omega = c-\langle c\rangle,
\end{equation}
using $\{\varphi(a),s\}^\Omega = \varphi'(a)\{a,s\}^\Omega$ and \eqref{equation:s}.

\begin{proof}
We use the notation $h:=1/k$. We argue by induction on $\ell \geq 2$ and start with $\ell=2$. Notice that 
\[
\begin{split}
e^{iS_0}(\varphi(\mathbf{A}_k) + hR)e^{-iS_0} & = \varphi(\mathbf{A}_k) + hR + i[S_0,\varphi(\mathbf{A}_k)] + \mc{O}(h^{2}) \\
& = \varphi(\mathbf{A}_k) + hR - h\Op_k(\{\varphi(a),s_0\}^\Omega) + \mc{O}(h^{2})
\end{split}
\]
By \eqref{equation:resoluble}, we can find a symbol $s_0 \in C^\infty_{{\comp}}(D^*\Sigma)$ such that $\{\varphi(a),s_0\}^\Omega = r-\langle r\rangle$, where $r \in C^\infty_{{\comp}}(D^*\Sigma)$. Since $s_0$ is real-valued, we can find $S_0$ selfadjoint with principal symbol $s_0$. Going back to the previous equation, we then find
\[
e^{iS_0}(\varphi(\mathbf{A}_k) + hR)e^{-iS_0}  =  \varphi(\mathbf{A}_k) + h\langle R\rangle + \mc{O}(h^{2}).
\]
Notice that $\langle R \rangle$ is selfadjoint. This proves the claim for $\ell=2$ using $[\varphi(\mathbf{A}_k),\langle R\rangle]=0$ as $[\mathbf{A}_k,\langle R\rangle]=0$.

Assume now that \eqref{equation:prod} holds for $\ell \geq 2$, that is 
\[
 U_{\ell}^* (\varphi(\mathbf{A}_k) + hR) U_{\ell}  = \varphi(\mathbf{A}_k) + hR'_\ell + h^{\ell} C,
\]
where $C \in \Psi_{\mathrm{tsc}}(\Sigma)$ is selfadjoint. Multiplying by $e^{i h^{\ell-1}S_{\ell-1}}$ on the left and $e^{-i h^{\ell-1}S_{\ell-1}}$ on the right, we find
\[
\begin{split}
U_{\ell +1}^* (\varphi(\mathbf{A}_k) + hR)  U_{\ell+1}  & = \varphi(\mathbf{A}_k) + hR'_\ell + h^{\ell} C  + i h^{\ell-1}[S_{\ell-1}, \varphi(\mathbf{A}_k)] + \mc{O}(h^{\ell+1}).
\end{split}
\]
By \eqref{equation:resoluble} again, we can find $s_{\ell-1} \in C^\infty_{{\comp}}(D^*\Sigma)$ such that $\{\varphi(a),s_{\ell-1}\}^\Omega = c-\langle c\rangle$. This yields:
\[ U_{\ell +1}^* (\varphi(\mathbf{A}_k) + hR)  U_{\ell+1}   = \varphi(\mathbf{A}_k) + h(\underbrace{R'_\ell + h^{\ell-1} \langle C\rangle}_{:=R'_{\ell+1}}) + \mc{O}(h^{\ell+1}),
\]
where $S_{\ell-1}$ is chosen selfadjoint with principal symbol $s_{\ell-1}$. Notice that $R'_{\ell +1}$ is selfadjoint. This proves the claim.
\end{proof}

\subsection{Localized eigenstates} \label{ssection:quasimodes} We now construct specific eigenstates for the operator $k^{-2}\Delta_k$ with adapted localization properties.

\subsubsection{Energy $E=0$} We first investigate the low energy eigenstates in the semiclassical regime
\[
k^{-2}\Delta_k u_k = E_k u_k, \qquad u_k \in C^\infty(\Sigma,L^k),\; \|u_k\|_{L^2}=1,\; E_k = \mc{O}(1/k).
\]
This corresponds the regime of eigenvalues $\lambda_{k,m}$ with $k \to \infty$ and $m \leqslant C$, where $C > 0$ is a fixed constant. From now on, we use the notation $h = 1/k$. Let $\underline{x}  \in \Si$, $u \in L_{\underline{x}}$ with $|u|=1$ and define the Dirac section of
$L^k$ by $(\delta_u^k, s) = s(\underline{x} )/u^k$ for any $k \geq 0$, $s \in C^\infty (\Sigma,L^k)$. We introduce the section
\begin{gather} \label{eq:def_etat_coherent}
\mathbf{e}_{k,m,\underline{x}} := \Pi_{k,m} \delta_u^k.
\end{gather}

The following holds:

\begin{proposition}
\label{proposition:e0}
Let $C > 0$ and $f \in C^\infty_{{\comp}}(T^* \Sigma)$. Then for all $k \geq 0$, $0 \leq m \leq C$, $\mathbf{e}_{k,m,\underline{x}} \in C^\infty(\Sigma,L^k)$ satisfies
  \begin{xalignat}{2}
\begin{split} \label{eq:estim1}
& \| \mathbf{e}_{k,m,\underline{x}}  \|^2 = \tfrac{k}{2\pi}  ( 1 + \bigo ( k^{-1/2})), \\
&  \langle \Op_k(f)  \mathbf{e}_{k,m,\underline{x}} , \mathbf{e}_{k,m,\underline{x}} \rangle = \| \mathbf{e}_{k,m,\underline{x}}  \|^2 \bigl( f (\underline{x},0) + \bigo
(k^{-1/2}) \bigr).
\end{split}
\end{xalignat}
\end{proposition}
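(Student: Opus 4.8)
The plan is to identify $\mathbf{e}_{k,m,\underline{x}}=\Pi_{k,m}\delta_u^k$ with a \emph{coherent state} concentrated in phase space, uniformly for $0\le m\le C$, at the single point $(\underline{x},0)$ of the zero section, and to deduce both estimates of \eqref{eq:estim1} from the near-diagonal behaviour of the Landau-level projector $\Pi_{k,m}$. First I would reduce everything to kernels: since $\Pi_{k,m}$ is an orthogonal projector, $\|\mathbf{e}_{k,m,\underline{x}}\|^2=\langle\Pi_{k,m}\delta_u^k,\delta_u^k\rangle$ is exactly the value on the diagonal at $\underline{x}$ of the scalar Schwartz kernel $\Pi_{k,m}(x,y)$ (written in a local unitary frame of $L$ near $\underline{x}$), and, inserting the oscillatory-integral expression for $\Op_k(f)$ from \S\ref{section:microlocal}, the matrix element $\langle\Op_k(f)\mathbf{e}_{k,m,\underline{x}},\mathbf{e}_{k,m,\underline{x}}\rangle$ becomes an explicit integral in $(x,y,\xi)$ against $\overline{\Pi_{k,m}(x,\underline{x})}$, $\Pi_{k,m}(y,\underline{x})$ and the amplitude of $\Op_k(f)$. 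Thus everything comes down to describing $\Pi_{k,m}(x,y)$ for $x,y$ in a fixed small ball around $\underline{x}$. It is convenient to take the frame in radial gauge at $\underline{x}$, so that the local connection form $\beta$ ($\nabla s=-i\beta\otimes s$) vanishes at $\underline{x}$; this is why the coherent state sits at $(\underline{x},0)$ rather than $(\underline{x},\beta(\underline{x}))$, although the statement itself is frame-independent.

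Next comes the coherent-state expansion of $\Pi_{k,m}$. For $m=0$, the Bochner identity \eqref{eq:bochner_bosonic} identifies $\ker(\Delta_k-\lambda_{k,0})$ with $H^0(\Sigma,L^k)$, so $\Pi_{k,0}$ is the Bergman projector and obeys the classical asymptotics $\Pi_{k,0}(x,y)=\tfrac{k}{2\pi}\,e^{k\psi(x,y)}\bigl(1+\mc{O}(k^{-1})\bigr)$ with $\psi$ the Bergman phase (holomorphic in $x$, antiholomorphic in $y$, $\op{Re}\psi\le0$ with equality only on the diagonal, $\psi(x,x)\in i\R$, and $x\mapsto\psi(x,\underline{x})$ having a nondegenerate maximum of its real part, equal to $0$, at $x=\underline{x}$); see \cite{Charles-00}. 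Hence $\mathbf{e}_{k,0,\underline{x}}$ is a $k^{-1/2}$-wide coherent state localised at $(\underline{x},0)$, and the on-diagonal value gives the first line of \eqref{eq:estim1}. For $m\ge1$ I would propagate this through the intertwining relations underlying \eqref{eq:rec}: $\ker(\Delta_k-\lambda_{k,m})$ is the image, under a fixed product $\con{\partial}_0^{\,*}\cdots\con{\partial}_{m-1}^{\,*}$ of adjoint $\con{\partial}$-operators, of the space $H^0(\Sigma,L^k\otimes K^{-m})$ of holomorphic sections of the twisted bundle, so $\mathbf{e}_{k,m,\underline{x}}$ is that raising operator applied to a Bergman coherent state of $L^k\otimes K^{-m}$. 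As $k^{-1}\con{\partial}^{\,*}$ is a semiclassical first-order operator, the result is again concentrated at $(\underline{x},0)$, now carrying a Laguerre/Hermite profile in place of a Gaussian; after the correct normalisation the on-diagonal leading coefficient is unchanged (the relevant Laguerre polynomial equals $1$ at the origin), uniformly in $m\le C$. In the constant-curvature setting this is most transparently done through the $\mathrm{PSL}(2,\R)$-homogeneous model of $(L^k,\nabla^k)$, in which the Landau-level reproducing kernels are explicit Laguerre functions of the hyperbolic distance.

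Granting this, the first identity in \eqref{eq:estim1} is immediate from the on-diagonal value. For the second, substitute the coherent-state form of $\mathbf{e}_{k,m,\underline{x}}$ into the integral above: the total phase in $(x,y,\xi)$ is $-i\xi(\exp_x^{-1}(y))+\overline{\psi(x,\underline{x})}+\psi(y,\underline{x})$, whose only critical point is $(x,y,\xi)=(\underline{x},\underline{x},0)$ — stationarity in $\xi$ forces $y=x$, and stationarity in $y$ together with $d_y\psi(\,\cdot\,,\underline{x})|_{\underline{x}}=0$ then forces $\xi=0$. A standard (complex) Laplace/stationary-phase expansion at this point yields $\langle\Op_k(f)\mathbf{e}_{k,m,\underline{x}},\mathbf{e}_{k,m,\underline{x}}\rangle=\|\mathbf{e}_{k,m,\underline{x}}\|^2\bigl(f(\underline{x},0)+\mc{O}(k^{-1/2})\bigr)$, the remainder accounting for the first-order Taylor coefficient of $f$ at $(\underline{x},0)$ integrated against the $k^{-1/2}$-wide profile (a crude bound using only $\|f\|_{C^1}$ already produces this rate), uniformly in $m\le C$.

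The one genuinely non-routine point, which I expect to be the main obstacle, is the uniform (in $1\le m\le C$) control of the near-diagonal asymptotics of the higher Landau-level projectors $\Pi_{k,m}$ — that they share the leading order on the diagonal and the $k^{-1/2}$ concentration scale of the ordinary Bergman kernel; this is cleanest via the explicit homogeneous model above, or alternatively via the general theory of generalized Bergman kernels for the renormalized Bochner Laplacian. I note finally that for the weaker, non-quantitative form of \eqref{eq:estim1} that already suffices for Theorem \ref{theorem:main}\,(i), one can bypass the explicit expansion: $\mathbf{e}_{k,m,\underline{x}}$ is a quasimode of $k^{-2}\Delta_k$ at energy $0$, hence by Proposition \ref{proposition:scl-measure} any of its semiclassical limits is supported on the zero section, while its spatial localisation at $\underline{x}$ is inherited from $\delta_u^k$ through the off-diagonal decay of $\Pi_{k,m}$ — but this soft route does not deliver the rate $\mc{O}(k^{-1/2})$.
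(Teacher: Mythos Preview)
Your proposal is correct and follows essentially the same route as the paper: identify $\|\mathbf{e}_{k,m,\underline{x}}\|^2$ with the on-diagonal value $\Pi_{k,m}(\underline{x},\underline{x})$, use a coherent-state/Gaussian expansion of $\Pi_{k,m}(\cdot,\underline{x})$ at scale $k^{-1/2}$ with Laguerre-polynomial leading amplitude, and evaluate $\langle\Op_k(f)\mathbf{e}_{k,m,\underline{x}},\mathbf{e}_{k,m,\underline{x}}\rangle$ by stationary phase at $(\underline{x},\underline{x},0)$. The only difference is that the paper does not derive the Landau-level kernel asymptotics via raising operators or the homogeneous model as you sketch, but simply quotes the expansion (your ``one genuinely non-routine point'') from \cite{Landau1,Landau2}, where it is stated in normal coordinates as $\mathbf{e}_{k,m,\underline{x}}(x)=\tfrac{k}{2\pi}e^{-k|x|^2/4}\sum_{\ell\ge0}k^{-\ell/2}a_\ell(k^{1/2}x)\,t^k(x)$ with $a_0=Q_m(|x|^2)$; the paper also remarks that parity of the $a_\ell$ upgrades the $\mc{O}(k^{-1/2})$ to $\mc{O}(k^{-1})$.
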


As we will see, $\mathbf{e}_{k,m,\underline{x}}$ is a Gaussian state centered at
$\underline{x}$. We provide a proof building on \cite{Landau1,Landau2}.

\begin{proof}
The section $ \mathbf{e}_{k,m,\underline{x}}$ is given in terms of the Schwartz kernel of
  the projector $\Pi_{k,m}$ by
\begin{gather} \label{eq:noyau_coherent}
  \mathbf{e}_{k,m,\underline{x}} (x) \otimes \con{u}^k = \Pi_{m,k} (x, \underline{x})
  \in L_x^k \otimes \con{L}_{\underline{x}}^k.
\end{gather} 
It was proved in \cite{Landau1,Landau2} that $\Pi_{k,m} (x,\underline{x})$ has an
asymptotic expansion, which leads to the following expansion for $ \mathbf{e}_{k,m,\underline{x}}$. Introduce coordinates $(x_1, x_2)$ centered at $\underline{x}$ and such that the metric $g$
is $(1 + \bigo ( |x|) ) (dx_1^2 + dx_2^2)$ with $|x|^2 =  x_1^2 + x_2^2$. Choose a primitive $\al = \al_1
dx_1 + \al_2 dx_2 $ of the Riemannian volume $\vol$ on a neighborhood of $y$, such that $ \al_1 (\underline{x})
= \al_2 (\underline{x}) =0$ and $\partial_{x_i} \al_j (\underline{x})$ is antisymmetric. Let $t$ be a
frame of $L$ such that  $t (\underline{x}) = u$ and $\nabla t = \tfrac{1}{i}
\al \otimes t$. Then one has on a neighborhood of $\underline{x}$:
\begin{gather} \label{eq:theta}
 \mathbf{e}_{k,m,\underline{x}} (x) =  \frac{k}{2 \pi} e^{-\frac{1}{4} k  |x|^2 } \sum_{\ell
   =0}^N k^{ -\frac{\ell}{2} } a_\ell ( k^{\frac{1}{2} } x_1, k^{\frac{1}{2}}
     x_2)  t^k(x)  + \bigo ( k^{- \frac{N+1}{2}})  
\end{gather}
where the $\bigo $ is in the $\mathcal{C}^0$-norm and the $a_{\ell}$'s are polynomials depending on $m$ and smoothly on
$\underline{x}$.  The leading coefficient $a_0$ is given in terms of the $m$-th
Laguerre polynomial $Q_m$ by
\begin{gather} \label{eq:leading}
a_0 (x_1, x_2)  = Q_m ( |x|^2), \qquad Q_m (t) = \tfrac{1}{m!} ( \tfrac{d}{dt}
-1 ) ^m t^m
\end{gather}
Moreover, $\mathbf{e}_{k,m,\underline{x}}$ is $\bigo_{C^\infty} ( k^{-\infty})$ on any compact set not
containing $\underline{x}$.
Formulas \eqref{eq:theta}, \eqref{eq:leading} correspond to \cite[Equations
(26) and (40)]{Landau1}.

Since $\Pi_{k,m}$
is a projector, $\| \mathbf{e}_{k,m,\underline{x}} \|^2 =  \Pi_{k,m}
(\underline{x},\underline{x})$, so by \eqref{eq:theta}, 
$$\| \mathbf{e}_{k,m,\underline{x}} \|
^2 =  \frac{k}{2\pi} + \bigo ( k^{1/2}) $$
  because $Q_m  (0) =1$. 
  We then compute the asymptotic expansion of the scalar product
  $$\langle \Op_k(f) \mathbf{e}_{k,m,\underline{x}},
  \mathbf{e}_{k,m,\underline{x}} \rangle = \sum_{\ell, \ell' \geqslant 0 }
k^{-\frac{1}{2} ( \ell + \ell')} I_{\ell, \ell'}(f,k) $$ with $I_{\ell, \ell'} (f,k)$ given by the
integral 
$$  \Bigl( \frac{k}{2 \pi} \Bigr) ^4   \int  e^{ik ( x- y ) \xi - \frac{1}{4}k ( |x|^2 + |y^2| ) }
f ( \tfrac{1}{2} (x+y), \xi ) a_{\ell} ( k^{\frac{1}{2}} x) \overline{
  a_{\ell'} ( k^{\frac{1}{2} } y )}  \; \dd\xi_1 \dd\xi_2 \dd x_1 \dd x_2 \dd y_1 \dd y_2 $$
Notice that the phase is quadratic, non degenerate. It follows from a stationary phase computation that $I_{\ell, \ell'} ( f,k) = k C_{\ell, \ell'} f(0, 0 ) + \bigo
( k^{\frac{1}{2}} )$ for some complex coefficient $C_{\ell, \ell'}$.  This yields:
$$\langle \Op_k(f) \mathbf{e}_{k,m,\underline{x}}, \mathbf{e}_{k,m,\underline{x}} \rangle
= k f(0,0) C_{0,0} + \bigo ( k^{1/2})$$
and $C_{0,0} = (2 \pi)^{-1} $ because of the previous estimate of $\| \mathbf{e}_{k,m,\underline{x}}
\|^2$. This concludes the proof. We could actually slightly improve the result
by replacing the $\bigo (k^{-1/2})$'s in \eqref{eq:estim1} by $\bigo ( k^{-1})$'s, which follows
from the fact that 
each $a_\ell$ has the same parity as $\ell$.  
\end{proof}

\subsubsection{Energies $0 < E < E_c$}  We now investigate the eigenstates such that
\[
k^{-2}\Delta_{k} u_k = (E+o(1))u_k,
\]
with $0 < E < E_c$. Equivalently, this corresponds to the regime of
eigenvalues $\lambda_{k,m}$ with $k \to \infty$ and  $\eps k B \leqslant m \leqslant (1- \eps)kB$, where $\eps> 0$ is fixed.

\begin{proposition} \label{lem:gaussian-beam}
Let $z_0 \in D^*_0\Sigma := D^*\Sigma\setminus \{ \xi = 0 \}$. Then, there exists a neighborhood $U \subset D^*_0\Sigma$ of $z_0$ such that for all $f \in C^\infty_{{\comp}}(T^*\Sigma)$, for all $k \geq 0$, $\eps kB \leq m \leq (1-\eps)kB$, and $z \in U$ such that
\[
m=ka(z),
\]
there exists $\mathbf{e}_{k,m,z} \in C^\infty(\Sigma,L^{k})$ such that $\| \mathbf{e}_{k,m,z} \|^2 = 1$ and the
projection
\[
\mathbf{f}_{k,m,z} := \Pi_{k,m} \mathbf{e}_{k,m,z}
\]
satisfies
\begin{xalignat}{2}
\begin{split} 
& \| \mathbf{f}_{k,m,z} \|^2 = \tfrac{1}{ 2 \sqrt \pi} k^{-\frac{1}{2}}(1+o_{f,U}(1)), \\
& \langle \Op_k(f)  \mathbf{f}_{k,m,z}, \mathbf{f}_{k,m,z} \rangle = \|\mathbf{f}_{k,m,z}\|^2 \bigl( \langle f \rangle (z) + \bigo_{f,U}
(k^{-\frac{1}{4}}) \bigr).
\end{split}
\end{xalignat}
The remainder terms $\bigo$'s are uniform with respect to $k \geq 0$.
\end{proposition}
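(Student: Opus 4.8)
The plan is to build $\mathbf{e}_{k,m,z}$ by \emph{translating} in phase space one of the Gaussian coherent states $\mathbf{e}_{k,m,\underline{x}}$ from Proposition \ref{proposition:e0}, and then to compute the effect of the projector $\Pi_{k,m}$ on it using the Egorov-type identity already available through the periodic operator $\mathbf{A}_k$. More precisely, write $z=(\underline{x},\xi)$ with $\xi\neq 0$, and let $\mathbf{e}_{k,m,z}:= c_k\, \Op_k(\theta)\,\mathbf{e}_{k,m,\underline{x}}$, where $\theta\in C^\infty_{\comp}(T^*\Sigma)$ is a symbol equal to $1$ near $z$ and supported in $U$, and $c_k$ the normalization constant forcing $\|\mathbf{e}_{k,m,z}\|^2=1$ (by \eqref{eq:estim1} we have $c_k\sim\sqrt{2\pi/k}$, up to the stationary phase correction). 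The state $\Op_k(\theta)\mathbf{e}_{k,m,\underline{x}}$ is, by the stationary phase computation in the proof of Proposition \ref{proposition:e0} applied with the extra oscillating factor from $\theta$, a Gaussian state concentrated at the point $z$ in $T^*\Sigma$; this is what justifies the choice of base point.

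The key computation is the norm $\|\mathbf{f}_{k,m,z}\|^2=\|\Pi_{k,m}\mathbf{e}_{k,m,z}\|^2 = \langle \Pi_{k,m}\mathbf{e}_{k,m,z},\mathbf{e}_{k,m,z}\rangle$ (since $\Pi_{k,m}$ is a projector) and the matrix element $\langle \Op_k(f)\Pi_{k,m}\mathbf{e}_{k,m,z},\Pi_{k,m}\mathbf{e}_{k,m,z}\rangle = \langle \Pi_{k,m}\Op_k(f)\Pi_{k,m}\mathbf{e}_{k,m,z},\mathbf{e}_{k,m,z}\rangle$. Here Lemma \ref{lem:commutateur_espace_propre} is the crucial input: since $\eps kB\leq m\leq(1-\eps)kB$ and $f$ is supported in $D^*\Sigma$, we get $\Pi_{k,m}\Op_k(f)\Pi_{k,m}=\Op_k(\langle f\rangle)\Pi_{k,m}+\bigo_{L^2\to L^2}(k^{-1})$, uniformly in $m$. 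Applying this with $f$ replaced first by $\theta$ (recall $\mathbf{e}_{k,m,z}= c_k\Op_k(\theta)\mathbf{e}_{k,m,\underline{x}}$, so $\Pi_{k,m}\mathbf{e}_{k,m,z}$ is governed by $\langle\theta\rangle$) and then by the product $\bar\theta\#f\#\theta$, one reduces both quantities to scalar products of the form $\langle \Op_k(g)\mathbf{e}_{k,m,\underline{x}},\mathbf{e}_{k,m,\underline{x}}\rangle$, for suitable symbols $g=\langle\bar\theta\theta\rangle$ resp. $g=\langle\bar\theta(f\#\text{stuff})\theta\rangle$, plus errors. Evaluating these via \eqref{eq:estim1} (which gives the value of such a scalar product at the footpoint, equal to $g(\underline{x},0)$ up to $\|\mathbf{e}_{k,m,\underline{x}}\|^2$), and tracking how $\langle\bar\theta\theta\rangle$ and $\langle\bar\theta f\theta\rangle$ evaluate at $(\underline{x},0)$ — which, by unwinding the definition of the average \eqref{equation:average} and of $\theta$ localized at $z$, produces $\langle f\rangle(z)$ — yields the stated leading terms. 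The power $k^{-1/2}$ in $\|\mathbf{f}_{k,m,z}\|^2$ arises because $\|\mathbf{e}_{k,m,z}\|^2=1$ forces $c_k^2\sim 2\pi/k$, while $\langle\theta\rangle(\underline{x},0)$ contributes an order-$k$ factor from $\|\mathbf{e}_{k,m,\underline{x}}\|^2$; the product is order $k^{0}$... so in fact the $k^{-1/2}$ must come from the fact that $\Op_k(\theta)$ with $\theta$ vanishing at $(\underline{x},0)$ kills the leading order, and the Gaussian width $k^{-1/2}$ in phase space is what survives — this is exactly where the stationary phase estimate for a \emph{non-resonant} localization must be done carefully.

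The main obstacle is precisely the last point: because the coherent state $\mathbf{e}_{k,m,\underline{x}}$ is centered at $\xi=0$ while $z$ has $\xi\neq 0$, multiplying by $\Op_k(\theta)$ localized near $z$ produces an amplitude that is exponentially small unless one \emph{first} shifts the state to be centered at $z$. So the honest construction is to take $\mathbf{e}_{k,m,z}$ to be a genuinely translated Gaussian — e.g. use the Heisenberg/magnetic translation (or simply redo the local model of \eqref{eq:theta} with the Gaussian $e^{-\frac14 k|x|^2}$ replaced by $e^{ik\xi\cdot x-\frac14 k|x-\underline{x}|^2}$ in the Bargmann picture) — and then observe that $\Pi_{k,m}$ applied to it still has a stationary-phase expansion, now with the critical point at $z$, because $m=ka(z)$ is exactly the resonance condition $p=\beta(a)$ at the energy $p(z)$. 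The half-integer power $k^{-1/2}$ then comes out of the Gaussian integral in the \emph{two} fiber directions (one transverse to the energy shell contributes $k^{-1/2}$; the one along the magnetic-flow orbit is handled by the averaging), while the weaker error $\bigo(k^{-1/4})$ reflects the fact that the state is a beam (spread of size $k^{-1/2}$ along the periodic orbit, not fully localized), so the stationary phase along that direction is only of Airy/degenerate type. I would carry this out by: (1) defining $\mathbf{e}_{k,m,z}$ via the translated local model and checking $\|\mathbf{e}_{k,m,z}\|^2=1$ by the Gaussian integral; (2) computing $\langle\Op_k(f)\mathbf{e}_{k,m,z},\mathbf{e}_{k,m,z}\rangle = f(z)+o(1)$ by stationary phase, exactly as in Proposition \ref{proposition:e0} but centered at $z$; (3) inserting $\Pi_{k,m}$ via Lemma \ref{lem:commutateur_espace_propre} to turn $f(z)$ into $\langle f\rangle(z)$ and to compute $\|\mathbf{f}_{k,m,z}\|^2$; (4) tracking the degenerate direction along the $2\pi$-periodic flow of $H_a^\Omega$ to extract the $k^{-1/2}$ normalization and the $k^{-1/4}$ error.
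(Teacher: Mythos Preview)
Your final four-step plan is essentially the paper's argument, and steps (1)--(3) are correct: one takes a standard coherent state $\mathbf{e}_{k,m,z}$ centered at $z$ (so $\Op_k(g)\mathbf{e}_{k,m,z}=g(z)\mathbf{e}_{k,m,z}+\mathcal{O}(h^{1/2})$), and Lemma~\ref{lem:commutateur_espace_propre} turns $\langle\Op_k(f)\mathbf{f}_{k,m,z},\mathbf{f}_{k,m,z}\rangle$ into $\langle f\rangle(z)\|\mathbf{f}_{k,m,z}\|^2+\mathcal{O}(h^{1/2}\|\mathbf{f}_{k,m,z}\|)$. The first half of your proposal (building the state as $\Op_k(\theta)\mathbf{e}_{k,m,\underline{x}}$) is a false start, as you yourself note; just drop it and take a coherent state at $z$ from the outset.

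The genuine gap is step~(4), the computation of $\|\mathbf{f}_{k,m,z}\|^2$. Lemma~\ref{lem:commutateur_espace_propre} does \emph{not} help here: with $f\equiv 1$ it collapses to $\Pi_{k,m}^2=\Pi_{k,m}$, and there is no ``degenerate stationary phase'' argument in the energy or orbit direction that produces the constant $\tfrac{1}{2\sqrt{\pi}}$. The paper instead uses the Fourier representation \eqref{eq:integrale_projecteur} of the projector to write
\[
\|\mathbf{f}_{k,m,z}\|^2=\frac{1}{2\pi}\int_0^{2\pi}\langle e^{itk(\mathbf{A}_k-a(z))}\mathbf{e}_{k,m,z},\mathbf{e}_{k,m,z}\rangle\,\dd t,
\]
observes via Egorov that the integrand is $\mathcal{O}(h^\infty)$ away from $t\in 2\pi\Z$ (since the microsupport of $U_t\mathbf{e}_{k,m,z}$ is $\{\Phi_t^a(z)\}$), and then conjugates $\mathbf{A}_k-a(z)$ microlocally to $-ih\partial_{x_1}$ by a Fourier integral operator. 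In that model the propagator is translation and the integral is an explicit Gaussian computation giving $\tfrac{1}{2\sqrt{\pi}}h^{1/2}$. This FIO normal form step is the missing ingredient in your outline.

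Finally, your explanation of the $\mathcal{O}(k^{-1/4})$ error is not the right mechanism. It is not an Airy-type phenomenon; it simply falls out of bookkeeping: the error in the matrix element is $\mathcal{O}(h^{1/2}\|\mathbf{f}_{k,m,z}\|)$, and since $\|\mathbf{f}_{k,m,z}\|\sim h^{1/4}$, dividing by $\|\mathbf{f}_{k,m,z}\|^2\sim h^{1/2}$ leaves $\mathcal{O}(h^{1/4})$.
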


The state $\mathbf{e}_{k,m,z}$ is a coherent state centred at $z$ and $\mathbf{f}_{k,m,z}$
is a Gaussian beam supported on the magnetic geodesic of $z$ (i.e. the flowline of $H_a^\Omega$ passing through $z$).


\begin{proof}
We use $h=1/k$ in the proof once again. Let $\mathbf{e}_{k,m,z} \in C^\infty(\Sigma,L^k)$ be a Gaussian state centred at $z$ and such that $\|\mathbf{e}_{k,m,z}\|_{L^2}=1$ (see \cite[Example 1, Chapter 5]{Zworski-12} for instance). A quick computation using the stationary phase lemma reveals
  \begin{gather} \label{eq:etat_coherent}
    \Op_k(g) \mathbf{e}_{k,m,z}  = g (z) \mathbf{e}_{k,m,z} + \bigo_{L^2} (
  h^{\frac{1}{2} } ) 
\end{gather}
for any $g \in C^\infty_{{\comp}}(T^*\Sigma)$. Then $\mathbf{f}_{k,m,z} := \Pi_{k,m} \mathbf{e}_{k,m,z}$ (with $m=ka(z)$) satisfies by Lemma
\ref{lem:commutateur_espace_propre} that
\begin{xalignat}{2} \notag
  \langle \Op_k( f ) \mathbf{f}_{k,m,z}, \mathbf{f}_{k,m,z} \rangle & = \langle \Op_k( \langle f \rangle )
  \mathbf{e}_{k,m,z}, \Pi_{k,m} \mathbf{e}_{k,m,z} \rangle + \bigo ( h \|\mathbf{f}_{k,m,z}\| ) \\
  & = \langle f \rangle (z) \|\mathbf{f}_{k,m,z}\|^2 + \bigo ( h^{\frac{1}{2}} \|\mathbf{f}_{k,m,z}\|    ).
  \end{xalignat}
  To estimate the norm of $\mathbf{f}_{k,m,z}$, we use
  \eqref{eq:integrale_projecteur}, namely (recall that $U_t = e^{itk\mathbf{A}_k}$):
\begin{gather} \label{eq:int_norme}
\begin{split}
  \|\mathbf{f}_{k,m,z}\|^2 = \langle \Pi_{k,m} \mathbf{e}_{k,m,z}, \mathbf{e}_{k,m,z} \rangle & = \frac{1}{2 \pi }
  \int_0^{2 \pi } e^{-itm } \langle U_t \mathbf{e}_{k,m,z} ,\mathbf{e}_{k,m,z}
  \rangle \; \dd t \\
  & =  \frac{1}{2 \pi } \int_0^{2 \pi }  \langle  e^{itk(\mathbf{A}_k-a(z))}
  \mathbf{e}_{k,m,z} ,\mathbf{e}_{k,m,z} \rangle \; \dd t,
 \end{split}
\end{gather}
where we used that $m = k a(z)$. 
By assumption, the microsupport of $\mathbf{e}_{k,m,z}$ is $\{z\}$ (that is for
  any $\varphi \in C^\infty _{{\comp}} ( T^* \Si )$ identically equal to 1 on a
  neighborhood of $z$, $\Op_k( \varphi ) \mathbf{e}_{k,m,z} = \mathbf{e}_{k,m,z} + \bigo_{C^\infty} (
  h^{\infty})$). Then by Egorov's theorem, the microsupport of
  $U_t \mathbf{e}_{k,m,z}$ is $ \{ \Phi_t^a(z) \}$. Since $\Phi_t^a (z) = z $ if and only if $t$
  is a multiple of $2 \pi$, the integral
  \eqref{eq:int_norme} is modified by a $\bigo ( h^{\infty})$ if we
  restrict the integral to a neighborhood of $t=0$. 
  
  Microlocally, $\mathbf{A}_k-a(z)$ can be conjugated by a
  Fourier Integral Operator to $-ih\partial_{x_1}$ acting on
  $\R^2_{x_1, x_2}$, see \cite[Theorem 12.3]{Zworski-12}. Since we can restrict the integral \eqref{eq:int_norme} to a
  neighborhood of the origin, it is sufficient to do the computation in this
  model. The propagator is then given by $e^{itk(\mathbf{A}_k-a(z))} \Psi ( x_1, x_2) = \Psi ( x_1 +t, x_2)$ for $\Psi \in C^\infty(\R^2)$. The point $z$ is in the characteristic of $\mathbf{A}_k-a(z)$; it is therefore mapped to $(\underline{x}, \underline{\xi}_1=0,\underline{\xi}_2) \in T^*\R^2$. Up to conjugating again by a translation, we can further assume that $\underline{x}=0$. Let
  $\Psi_{\underline{x}, \underline{\xi}} \in C^\infty ( \R^2)$ be 
  the coherent state
  $$ \Psi_{\underline{x}, \underline{\xi}} ( x ) = ( \pi h ) ^{-1/2} e^{ -
    \frac{1}{2h} | x - \underline{x}|^2 + \frac{i}{h} \underline{\xi} ( x - \underline{x}) } = (\pi h)^{1/2}e^{ -
    \frac{1}{2h} | x|^2 + \frac{i}{h} \underline{\xi}_2 x_2 }.$$
By an exact computation, we find that for $\delta > 0$, $\chi \in C^\infty_{{\comp}}(-\delta,\delta)$ such that $\chi \equiv 1$ in a neighborhood of $0$,
\[
\begin{split} \frac{1}{2 \pi } \int_0^{2 \pi }  & \langle  e^{itk(\mathbf{A}_k-a(z))} \mathbf{e}_{k,m,z} ,\mathbf{e}_{k,m,z} \rangle_{L^2} \chi(t)  \dd t \\
  & = \frac{1}{2\pi} \int_{-\infty}^{+\infty} \int_{\R^2} \Psi _{ \underline{x} ,
  \underline{\xi}  } ( x_1 + t, x_2 ) \con { \Psi_{  \underline{x}, \underline{\xi} } (x_1,
  x_2)} \chi(t) \; \dd x_1 \dd x_2 \dd t \\
  & = \frac{ h^{1/2}}{2 \sqrt \pi }(1+ \mc{O}(h^\infty)).
  \end{split}
  \]
Going back to \eqref{eq:int_norme}, we find that $ \|\mathbf{f}_{k,m,z}\|^2_{L^2} =
\tfrac{1}{ 2 \sqrt \pi} h^{1/2} + \bigo ( h^{\infty})$, which
concludes the proof.
\end{proof}

\section{Proof of the main results}

\label{section:proofs}

In this final section, we prove Theorems \ref{theorem:main}, \ref{theorem:constant2} and conclude with the proof of Theorem \ref{theorem:horocyclic}. Up to a preliminary rescaling, we can always assume that $B=1/|\chi(\Sigma)|$.

\subsection{Proof of Theorem \ref{theorem:main}} We can now characterize the semiclassical defect measures in the three regimes.

\begin{proof}[Proof of Theorem \ref{theorem:main}] (i) Suppose $E=0$. We claim that any probability measure $\mu$ on $\Sigma$ is a semiclassical limit of eigenstates \eqref{equation:eigenstates}. First, given $x \in \Sigma$,
\[
\mathbf{e}'_{k,m,x} := \mathbf{e}_{k,m,x}/\|\mathbf{e}_{k,m,x}\|_{L^2}
\]
converges semiclassically to $\delta_x$ by Proposition \ref{proposition:e0}. Given $x_1, ..., x_N \in \Sigma$, $\mathbf{E}_{k,m,N} := N^{-1/2}(\mathbf{e}'_{k,m,x_1} + ... + \mathbf{e}'_{k,m,x_N})$ converges semiclassically to $N^{-1}(\delta_{x_1}+...+\delta_{x_N})$ and $\|\mathbf{E}_{k,m,N}\|^2_{L^2} = 1 + \mc{O}(k^{-\infty})$. Indeed, to compute the norm, let $\varphi_{i} \in C^\infty(\Sigma)$ be a small bump function centred at $x_i$, equal to $1$ on a small neighborhood of $x_i$ and such that $x_j \notin \supp(\varphi_i)$ for $j \neq i$. It is then immediate to verify that $\varphi_i \mathbf{e}'_{k,m,x_i} = \mathbf{e}'_{k,m,x_i} + \mc{O}_{C^\infty}(h^\infty)$ and $\varphi_j \mathbf{e}'_{k,m,x_i} = \mc{O}_{C^\infty}(h^\infty)$. This leads to
\[
\begin{split}
\|\mathbf{E}_{k,m,N}\|^2_{L^2} & = N^{-1}\langle \mathbf{e}'_{k,m,x_1} + ... + \mathbf{e}'_{k,m,x_N}, \mathbf{e}'_{k,m,x_1} + ... + \mathbf{e}'_{k,m,x_N}\rangle_{L^2} \\
& = 1 + N^{-1} \sum_{i \neq j} \langle \mathbf{e}'_{k,m,x_i},\mathbf{e}'_{k,m,x_j}\rangle_{L^2} \\
& =  1 + N^{-1} \sum_{i \neq j} \langle \varphi_i \mathbf{e}'_{k,m,x_i},\mathbf{e}'_{k,m,x_j}\rangle_{L^2} + \mc{O}(h^\infty) \\
& = 1 + N^{-1} \sum_{i \neq j} \langle  \mathbf{e}'_{k,m,x_i},\varphi_i \mathbf{e}'_{k,m,x_j}\rangle_{L^2} + \mc{O}(h^\infty)  = 1 + \mc{O}(h^\infty).
\end{split}
\]
The same computation shows that the associated defect measure is equal to $N^{-1}(\delta_{x_1}+...+\delta_{x_N})$. Finally, since Dirac masses are dense in probability measures, a diagonal extraction allows to construct a family $(\mathbf{E}_{k_n,m_n,N_n})_{n \geq 0}$ with microlocal limit $\mu$, and that for any probability measure $\mu$ on $\Sigma$.

We now assume that $0 < E < E_c$. Fix $z \in \{p=E\}$ and let $(z_k)_{k \geq 0}$ be a sequence converging to $z$ such that $a(z_k) = k^{-1}m_k$ with $m_k \in \Z_{\geq 0}$. By Proposition \ref{lem:gaussian-beam}, $\mathbf{f}_{k,m_k,z_k}/\|\mathbf{f}_{k,m_k,z_k}\|_{L^2}$ converges microlocally to $\delta_z$, the Dirac masses carried by the periodic orbit of $z$. As above, one can also realize any convex linear combination of such Dirac masses. As these are dense in flow-invariant probability measures on $\{p=E\}$, this proves the claim. \\

(ii) This follows immediately from Proposition \ref{proposition:scl-measure}, item (ii), and the unique ergodicity of the horocyclic flow (see Lemma \ref{lemma:horocycle}, item (ii)). \\

(iii) The proof follows \emph{verbatim} the standard proof of Quantum Ergodicity. In the framework of twisted quantization, this question was recently adressed in \cite[Section 5.3]{Cekic-Lefeuvre-24} (in a more general framework).
\end{proof}

\begin{remark} In the first regime, our proof shows a slightly
stronger result: for any invariant probability measure $\mu$, for any sequence
$(E_k)_{k \geq 0 }$ converging to $E \in [0, E_c]$ such that $E_k$ is in the spectrum of
$k^{-2} \Delta_k$ (and $E_k= \bigo ( k^{-1})$ when $E=0$), there exists a
sequence of eigenstates $(u_k)_{k \geq 0}$ associated to the eigenvalues $E_k$ such that $u_{k}
\rightharpoonup_{k \to \infty} \mu$.
\qed \end{remark}

\subsection{Proof of Theorem \ref{theorem:constant2}} \label{ssection:proof2}The proof of Theorem \ref{theorem:constant2} relies on the normal form established in Lemma \ref{lemma:normal-form}.

\begin{proof}[Proof of Theorem \ref{theorem:constant2}]
Let $(u_k)_{k \geq 0}$ be a sequence of eigenfunctions satisfying (recall $E_c=\tfrac{1}{2}B^2$):
\begin{equation}
\label{equation:pouet}
(k^{-2}\Delta_k + k^{-2}V) u_k = E_k u_k, \qquad 0 \leq E_k \leq E_c -\eps,
\; \|u_k\|_{L^2}=1,
\end{equation}
for some $\eps > 0$.

The multiplication operator $V \in \mc{L}(L^2)$ has norm $\|V\|_{\mc{L}(L^2) } \leq \|V\|_{L^\infty}$. As a consequence, 
\begin{equation}
\label{equation:confinement}
\mathrm{Sp}(\Delta_k + V) \subset \mathrm{Sp}(\Delta_k) + [-\|V\|_{L^\infty},\|V\|_{L^\infty}].
\end{equation}
By \eqref{equation:ev}, the gap between two consecutive eigenvalues of $\Delta_k$ (below the critical energy) is
\[
\lambda_{k,m}-\lambda_{k,m-1} = kB-m, \qquad 0 \leq m \leq \lceil kB\rceil - 1
\]
For any $\delta > 0$, if $0 \leq m \leq k(B-\delta)$, then the gap is $\geq
k\delta $. Taking $k \gg 1$ large enough, this gap can be made larger than $2\|V\|_{L^\infty}$, implying that the bands \eqref{equation:confinement} do not overlap.

The energy $E_k$ in \eqref{equation:pouet} satisfies $E_k \leq E_c-\eps$ for some $\eps > 0$. This implies that
\begin{equation}
\label{equation:ek}
E_k = k^{-2}\lambda_{k,m_k} + \mc{O}(k^{-2}),
\end{equation}
for some $0 \leq m_k \leq  k(B -\delta)$, where $\delta := \delta(\eps) > 0$ can be made explicit. To simplify notation, we write $m:=m_k$. 

We claim that:
\begin{equation}
\label{equation:ouf}
(\mathbbm{1}-\Pi_{k,m}) u_k = \mc{O}(k^{-1}).
\end{equation}
Here and in the remainder of the proof, the $\bigo$'s are measured in
$L^2$-norm. To prove \eqref{equation:ouf}, write (we use $(\Delta_{k} -\lambda_{k,m})\Pi_{k,m} =0$ in the first equality):
\[
\begin{split}
(\Delta_{k} -\lambda_{k,m})(\mathbbm{1}-\Pi_{k,m})u_k &= (\Delta_{k}
-\lambda_{k,m})u_k   \\
& \overset{\eqref{equation:ek}}{=} k^{2}(k^{-2}\Delta_k- E_k) u_k + \mc{O}(1) \\
& \overset{\eqref{equation:pouet}}{=} - V u_k + \mc{O}(1) \\
&= \mc{O}(1).
\end{split}
\]
However, the gaps between two consecutive eigenvalues of $\Delta_k$ is of size
$\geq k\delta $. Hence $(\Delta_{k} -\lambda_{k,m})$ can be inverted on
$\ran(\mathbbm{1}-\Pi_{k,m})$ and its inverse has norm $\mc{O}_{\mc{L}( L^2 )}(k^{-1})$ which implies \eqref{equation:ouf}.

We will now apply Lemma \ref{lemma:normal-form}. For that, consider $R \in \Psi^{\comp}_{\mathrm{tsc}}(\Sigma)$ with microsupport in $D^*\Sigma$ such that $R \equiv V$ microlocally in $\{p \leq E_c-\eps/2\}$ (for instance, consider $R = \Op_k(\chi \cdot \pi^*V)$, where $\pi : T^*\Sigma \to \Sigma$ is the footpoint projection, $\chi \in C^\infty_{\comp}(D^*\Sigma)$ is a cutoff function equal to $1$ on $\{p \leq E_c-\eps/2\}$). Since $u_k$ is microsupported in $\{p \leq E_c-\eps\}$ by \eqref{equation:pouet}, this implies that
\begin{gather} \label{eq:pouet_bis}
(k^{-2}\Delta_k + k^{-2}V) u_k = (k^{-2}\Delta_k + k^{-2}R) u_k +
\mc{O}(k^{-\infty}) = E_k u_k  .
\end{gather}
We then apply Lemma \ref{lemma:normal-form} with $\ell=4$, the function $\varphi$ such that $\varphi(\mathbf{A}_k) = k^{-2}\Delta_k$ and the perturbation $k^{-2}R$. We write
$U :=
e^{ih^2S_2}e^{ihS_1}e^{iS_0}$ so that
$$ U(k^{-2}\Delta_k + k^{-2}R)U^* = k^{-2}
\Delta_k + k^{-2}R' + \mc{O}(k^{-4}) $$
where $[R', \Delta_k]=0$, $R'$ is
selfadjoint, and the principal symbol of $R'$ is $\langle V \rangle$. 
Inserting $U$ in \eqref{eq:pouet_bis}, and writing $u_k' := U u_k$, we have
$$ (k^{-2} \Delta_k + k^{-2}R' ) u_k' = E_k u_k' +  \mc{O} ( k^{-4}) $$
 Applying $\Pi_{k,m}$ to the previous equation, and using that $[R',\Pi_{k,m}] = 0$ as $\Pi_{k,m}$ is a function of $\Delta_k$, we find
$$ (k^{-2} \Delta_k + k^{-2}R' ) \Pi_{k,m} u_k' = E_k \Pi_{k,m} u_k' +  \mc{O}
( k^{-4}) $$
Thus $$  k^{-2}R'  \Pi_{k,m} u_k' = (E_k - k^{-2} \lambda_{m,k} ) \Pi_{k,m} u_k' +  \mc{O}
( k^{-4}) $$
hence
$$ R' \Pi_{k,m}u_k'  = E'_k \Pi_{k,m} u_k' + \mc{O} (k^{-2}),
$$
where $E'_k = k^{2} ( E_k - k^{-2} \lambda_{k,m} ) = \mc{O}(1)$. 

Applying Proposition \ref{proposition:scl-measure}, we find that any semiclassical measure $\mu$ associated to $\Pi_{k,m} u_k'$ is invariant by $H^{\Omega}_{\langle V \rangle}$. To conclude, it suffices to observe that $U = \mathbbm{1} +  \mc{O}_{\mc{L}(L^2)}(k^{-1})$, so $u_k = u'_k + \mc{O}_{L^2}(k^{-1})$; using \eqref{equation:ouf}, we find that $u_k = \Pi_{k,m} u'_k +\mc{O}(k^{-1})$ so the semiclassical measures of $\Pi_{k,m} u_k'$ and $u_k$ are the same. This proves the claim.
\end{proof}

\subsection{Proof of Theorem \ref{theorem:horocyclic}} The key ingredient in the proof of Theorem \ref{theorem:horocyclic} is a long time Egorov theorem which is made possible thanks to Lemma \ref{lemma:wow}. Ultimately, this relies on the fact that the horocyclic flow is parabolic (that is its differential grows at most linearly in time). We emphasize that the we did not try to optimize the remainder terms in what follows.

We suppose that
\begin{equation}
\label{equation:to-use}
 k^{-2}\Delta_k u_k = (E_c+\mc{O}(h^\ell))u_k, \qquad u_k \in C^\infty(\Sigma,L^{k}),\; \|u_k\|_{L^2(\Sigma,L^{k})} = 1,
 \end{equation}
 where $\ell > 0$ is fixed.
 
 Let $\chi \in C^\infty_{\comp}(\R)$ be a non negative bump function equal to $1$ on $[-1/2,1/2]$ and $0$ outside of $[-1,1]$. Let $0 < \alpha < \min(1/28,\ell)$ and define
 \[
 \chi_k(x,\xi) := \chi(k^\alpha (p(x,\xi)-E_c)).
 \]
 The following holds:
 
 \begin{lemma} \label{lemma:1} $u_k = \Op_k(\chi_k)u_k + \mc{O}_{L^2}(k^{-\min(\ell-\alpha,1-18\alpha)})$. \end{lemma}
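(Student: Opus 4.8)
The plan is to localize $u_k$ in two steps: first to a fixed neighborhood of the critical shell $\{p=E_c\}$, and then down to the $k^{-\alpha}$-scale neighborhood cut out by $\chi_k$, the price of the refinement being powers of $k^{\alpha}$ stemming from the fact that $k^{-2}\Delta_k-E_c$ is only elliptic with a lower bound of size $\sim k^{-\alpha}$ on the annular region where $\phi-\chi_k$ lives.

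\emph{Crude cutoff.} First I would fix $\phi\in C^{\infty}_{\comp}(T^*\Sigma)$ equal to $1$ on $\{|p-E_c|\leq E_c/2\}$ with $\supp\phi\subset\{|p-E_c|<E_c\}$, a compact set since $\{p\leq 2E_c\}=\{|\xi|_g\leq\sqrt 2\,B\}$ is. On $\supp(1-\phi)$ as well as at fiber infinity $\partial_\infty T^*\Sigma$, the operator $k^{-2}\Delta_k-E_c$ is elliptic of order $2$, so the standard semiclassical elliptic parametrix (cf. \cite[Appendix E]{Dyatlov-Zworski-book}) produces an operator $G_0$, bounded on $L^2$ uniformly in $k$, with $G_0(k^{-2}\Delta_k-E_c)=\Id-\Op_k(\phi)+\mc{O}_{L^2\to L^2}(k^{-\infty})$. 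Applying this to $u_k$ and using $(k^{-2}\Delta_k-E_c)u_k=\mc{O}_{L^2}(h^{\ell})$ gives $u_k=\Op_k(\phi)u_k+\mc{O}_{L^2}(h^{\ell})$, so it remains to estimate $\Op_k(\psi_k)u_k$ with $\psi_k:=\phi-\chi_k$.

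\emph{Refined cutoff.} Since $\phi$ and $\chi_k$ both equal $1$ on $\{|p-E_c|\leq\tfrac12 k^{-\alpha}\}$ and vanish on $\{|p-E_c|\geq E_c\}$ for $k$ large, $\supp\psi_k\subset\{\tfrac12 k^{-\alpha}\leq|p-E_c|\leq E_c\}$, and the chain rule gives $\|\psi_k\|_{C^N(T^*\Sigma)}\lesssim k^{N\alpha}$. Pick $\zeta\in C^{\infty}_{\comp}(T^*\Sigma)$ equal to $1$ near $\supp\phi$ and set $\tilde q:=\zeta\cdot(p-E_c)$, $g:=\psi_k/\tilde q$ (the truncation $\zeta$ is needed because \eqref{equation:product-fine} applies to compactly supported symbols); then $g$ is smooth, $g\tilde q=\psi_k$, and since $\|(p-E_c)^{-1}\|_{C^N(\{|p-E_c|\geq\delta\})}\lesssim\delta^{-(N+1)}$ with $\delta=\tfrac12 k^{-\alpha}$, a Leibniz computation yields $\|g\|_{C^N(T^*\Sigma)}\lesssim k^{(N+1)\alpha}$; in particular $\|g\|_{L^\infty}\lesssim k^{\alpha}$, $\|g\|_{C^{14}}\lesssim k^{15\alpha}$, $\|g\|_{C^{17}}\lesssim k^{18\alpha}$. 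By \eqref{equation:calderon-fine}, $\|\Op_k(g)\|_{L^2\to L^2}\lesssim k^{\alpha}+k^{-1/2}k^{15\alpha}\lesssim k^{\alpha}$, where the last step uses exactly the hypothesis $\alpha<1/28$ (equivalently $15\alpha-\tfrac12\leq\alpha$). Writing $k^{-2}\Delta_k-E_c=\Op_k(p-E_c)+k^{-1}L$ with $L\in\Psi^1_{\mathrm{tsc}}(\Sigma)$, composing with $\Op_k(\zeta)$ on the left and using composition of disjoint-support operators together with the boundedness of $\Op_k(\zeta)L$ gives $\Op_k(\zeta)(k^{-2}\Delta_k-E_c)=\Op_k(\tilde q)+\mc{O}_{L^2\to L^2}(k^{-1})$. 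Hence, with $G:=\Op_k(g)\Op_k(\zeta)$, the product estimate \eqref{equation:product-fine} (with $\|g\|_{C^{17}}\lesssim k^{18\alpha}$, $\|\tilde q\|_{C^{17}}=\mc{O}(1)$) and $\|\Op_k(g)\|_{L^2\to L^2}\lesssim k^{\alpha}$ give
\[
G(k^{-2}\Delta_k-E_c)=\Op_k(g\tilde q)+\mc{O}_{L^2\to L^2}(k^{-1+18\alpha})=\Op_k(\psi_k)+\mc{O}_{L^2\to L^2}(k^{-1+18\alpha}).
\]
Applying this to $u_k$ and using $(k^{-2}\Delta_k-E_c)u_k=\mc{O}_{L^2}(h^{\ell})$ together with $\|G\|_{L^2\to L^2}\lesssim k^{\alpha}$ yields
\[
\Op_k(\psi_k)u_k=G(k^{-2}\Delta_k-E_c)u_k+\mc{O}_{L^2}(k^{-1+18\alpha})=\mc{O}_{L^2}\big(k^{-(\ell-\alpha)}\big)+\mc{O}_{L^2}\big(k^{-(1-18\alpha)}\big).
\]
Combining with the crude cutoff, $u_k=\Op_k(\phi)u_k+\mc{O}(h^{\ell})=\Op_k(\chi_k)u_k+\Op_k(\psi_k)u_k+\mc{O}(h^{\ell})$, and since $h^{\ell}=k^{-\ell}\leq k^{-(\ell-\alpha)}$ the claimed bound follows.

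\emph{Main difficulty.} There is no conceptual obstruction; the entire difficulty is quantitative bookkeeping. The inverse symbol $g\sim\psi_k/(p-E_c)$ carries a $C^{17}$-seminorm of size $k^{18\alpha}$ and an $L^2$-operator norm of size $k^{\alpha}$, and one must thread these through the sharp product and Calderón–Vaillancourt estimates \eqref{equation:product-fine}–\eqref{equation:calderon-fine} — with their specific Sobolev indices $17$ and $14$ — so that the two error exponents combine precisely to $\min(\ell-\alpha,1-18\alpha)$. Tracking where each power of $k^{\alpha}$ is produced and spent, and checking that $\alpha<1/28$ is the exact threshold keeping $\|\Op_k(g)\|_{L^2\to L^2}\lesssim k^{\alpha}$, is the only delicate point.
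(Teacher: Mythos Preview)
Your proof is correct and follows essentially the same route as the paper's: factor the cutoff defect through $p-E_c$, use the quantitative product and Calder\'on--Vaillancourt bounds \eqref{equation:product-fine}--\eqref{equation:calderon-fine} to control the resulting parametrix, and feed in the eigenvalue equation. The paper dispenses with your preliminary crude-cutoff step by directly invoking the microlocalization $\Op_k(b)u_k=\mc{O}(k^{-\infty})$ for $b$ supported away from $\{p=E_c\}$, and it normalizes the inverse symbol slightly differently by writing $1-\chi_k = k^{\alpha}(p-E_c)\,s_k$ with $s_k(x,\xi)=s(k^{\alpha}(p-E_c))$ for $s(r)=(1-\chi(r))/r$ smooth and \emph{bounded}, so that $\|s_k\|_{C^n}\lesssim k^{n\alpha}$ and in particular $\|\Op_k(as_k)\|_{L^2\to L^2}=\mc{O}(1)$; the extra $k^{\alpha}$ sits outside as an explicit prefactor rather than inside the operator norm as in your $g$. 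These are cosmetic differences; the error exponents and the role of $\alpha<1/28$ match exactly.
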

 
 
 \begin{proof}
 Define $s(r) := (1-\chi(r))/r$ for $r \in \R$ and set $s_k := s(k^\alpha(p(x,\xi)-E_c))$, that is
 \[
 1-\chi_k = k^\alpha (\tfrac{1}{2}|\xi|^2-E_c) s_k.
 \]
Observe that all the derivatives of $s$ are bounded on $\R$. In addition, for all $n \geq 0$, there exists $C_n > 0$ such that:
 \begin{equation}
 \label{equation:sk}
 \|s_k\|_{C^n(T^*\Sigma)} \leq C_n k^{\alpha n}.
 \end{equation}
 We let $a \in C^\infty_{\comp}(T^*\Sigma)$ be a cutoff function such that $a \equiv 1$ on the energy layers $0 \leq E \leq 2 E_c$ and $a \equiv 0$ for $E > 3E_c$. Notice that $\Op_k(a)u_k = u_k + \mc{O}_{L^2}(k^{-\infty})$ as the eigenstates $u_k$ are microlocalized on the energy shell $\{p=E_c\}$. Equivalently, for all $b \in S^0(T^*\Sigma)$ such that $\supp(b) \cap \{p=E_c\}=\emptyset$, one has:
 \begin{equation}
 \label{equation:microloc}\Op_k(b)u_k = \mc{O}_{L^2}(k^{-\infty}).
 \end{equation}
 
 The following holds for some operator $R_k \in \Psi^1_{\mathrm{tsc}}(\Sigma)$:
 \[
 \begin{split}
 u_k &- \Op_k(\chi_k)u_k \\
 & = \Op_k(1-\chi_k) u_k  = k^\alpha \Op_k((\tfrac{1}{2}|\xi|^2-E_c)s_k) u_k  \\ &
 \overset{\eqref{equation:microloc}}{=} k^\alpha \Op_k(a(\tfrac{1}{2}|\xi|^2-E_c) \cdot a s_k) u_k + \mc{O}_{L^2}(k^{-\infty}) \\
 & \overset{\eqref{equation:product-fine}}{=} k^\alpha \left(\Op_k(a s_k) \Op_k(a (\tfrac{1}{2}|\xi|^2-E_c)) + \mc{O}_{\mc{L}(L^2)}(k^{-1}\|s_k\|_{C^{17}(T^*\Sigma)})\right) u_k +  \mc{O}_{L^2}(k^{-\infty}) \\
 & = k^{\alpha} \Op_k(a s_k) \left(\Op_k(a)(k^{-2}\Delta_k - E_c) + k^{-1} R_k\right) u_k + \mc{O}_{L^2}(k^{-1+18\alpha})  \\
 & \overset{\eqref{equation:to-use}}{=} k^{\alpha} \Op_k(as_k) \mc{O}_{L^2}(k^{-\ell}) + \mc{O}_{L^2}(k^{-1+\alpha} + k^{-1+18\alpha}) \\
 & \overset{\eqref{equation:calderon-fine}}{=} \mc{O}_{L^2}(k^{\alpha-\ell}(\|as_k\|_{L^\infty}+k^{-1/2+14\alpha}) +
 k^{-1+18\alpha})\\ & = \mc{O}_{L^2}(k^{-\min(\ell-\alpha,1-18\alpha)}),
 \end{split}
 \]
 where we used in the second line that $u_k$ is microlocalized on the energy
 shell $\{E = E_c\}$ (see \eqref{equation:microloc}), in the third line \eqref{equation:product-fine}
 (this requires the symbols to have compact support, which is why we have
 introduced $a$), in the last line $-1/2+14\alpha < 0$, and \eqref{equation:sk} at each step to bound the derivatives of $s_k$. Also notice that $R_k$ is a pseudodifferential operator of order $1$ but, using \eqref{equation:microloc}, we obtain:
 \[
 k^{-1} R_k u_k = k^{-1} R_k \Op_k(a) u_k = \mc{O}_{L^2}(k^{-1}),
 \]
 as $a$ has compact support in $T^*\Sigma$.
 \end{proof}
 
The states $(u_k)_{k \geq 0}$ microlocalize on the critical energy shell $S^*_c\Sigma = \{p=E_c\}$. We introduce the notation
 \[
 C_\delta := \{ E_c-\delta \leq p(x,\xi) \leq E_c + \delta\}.
 \]
In the following lemma, we need to keep track of the exact number of derivatives on the symbol $a$:
 
 \begin{lemma}
 \label{lemma:2}
Let $0 < \eps < 1/28$. There exists a constant $C > 0$ such that for all $a \in C^\infty_{\comp}(T^*\Sigma)$ with $\supp(a) \subset \{|\xi| \leq 10E_c\}$,
 \[
 |\langle\Op_k(a)u_k,u_k\rangle_{L^2}| \leq \|a\|_{L^\infty(S^*_c\Sigma)}  + \mc{O}(k^{-\eps} \|a\|_{C^1(T^*\Sigma)} + k^{-\min(\ell-\eps, 1/2-14\eps)}\|a\|_{C^{17}(T^*\Sigma)}).
 \]
 \end{lemma}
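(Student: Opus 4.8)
The plan is to decompose $a$ using the spectral cutoff $\chi_k$ of Lemma~\ref{lemma:1} and to treat the ``on-shell'' part $a\chi_k$ and the ``off-shell'' part $a(1-\chi_k)$ by two different mechanisms. First one may assume $\ell>\eps$: otherwise $\min(\ell-\eps,\tfrac12-14\eps)\le 0$, so $k^{-\min(\ell-\eps,1/2-14\eps)}\ge 1$, while $|\langle\Op_k(a)u_k,u_k\rangle|\le\|\Op_k(a)\|_{L^2\to L^2}\le\|a\|_{L^\infty}+k^{-1/2}\|a\|_{C^{14}}\le C\|a\|_{C^{17}}$ by \eqref{equation:calderon-fine}, and the asserted bound is trivially true. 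Under $\ell>\eps$, Lemma~\ref{lemma:1} applies with $\alpha=\eps$ and yields $u_k=\Op_k(\chi_k)u_k+r_k$ with $\|r_k\|_{L^2}=\mc{O}(k^{-\min(\ell-\eps,1-18\eps)})$.

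Next I would split $\langle\Op_k(a)u_k,u_k\rangle=\langle\Op_k(a)\Op_k(\chi_k)u_k,u_k\rangle+\langle\Op_k(a)r_k,u_k\rangle$. The second term is controlled by \eqref{equation:calderon-fine} together with $\|u_k\|=1$: it is $\mc{O}\!\left((\|a\|_{L^\infty}+k^{-1/2}\|a\|_{C^{14}})\,k^{-\min(\ell-\eps,1-18\eps)}\right)$, which is $\mc{O}(k^{-\min(\ell-\eps,1/2-14\eps)}\|a\|_{C^{17}})$ since $1-18\eps\ge\tfrac12-14\eps$ for $\eps<\tfrac1{28}$. For the first term I would invoke the product estimate \eqref{equation:product-fine}, which applies because $\chi_k$ is supported in the $k$-independent compact set $\{|p-E_c|\le 1\}$ of $T^*\Sigma$ (as $\chi$ is supported in $[-1,1]$), so the constant there is uniform in $k$; this gives $\Op_k(a)\Op_k(\chi_k)=\Op_k(a\chi_k)+\mc{O}_{\mc{L}(L^2)}(k^{-1}\|a\|_{C^{17}}\|\chi_k\|_{C^{17}})$. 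Since each derivative of $\chi_k$ costs a factor $k^{\eps}$, one has $\|\chi_k\|_{C^{17}}\le Ck^{17\eps}$, so this error contributes $\mc{O}(k^{-(1-17\eps)}\|a\|_{C^{17}})=\mc{O}(k^{-(1/2-14\eps)}\|a\|_{C^{17}})$ to the matrix element. It then remains to bound $|\langle\Op_k(a\chi_k)u_k,u_k\rangle|\le\|\Op_k(a\chi_k)\|_{L^2\to L^2}\le\|a\chi_k\|_{L^\infty(T^*\Sigma)}+k^{-1/2}\|a\chi_k\|_{C^{14}(T^*\Sigma)}$, again by \eqref{equation:calderon-fine} (legitimate since $\supp(a\chi_k)\subset\supp(a)\subset\{|\xi|\le 10E_c\}$ is compact).

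The two norms of $a\chi_k$ are estimated directly. Its support lies in $C_{k^{-\eps}}$; for $(x,\xi)$ there, $|\xi|_g$ is within $\mc{O}(k^{-\eps})$ of $B$, so the radial rescaling $\xi\mapsto(B/|\xi|_g)\xi$ produces a point of $S^*_c\Sigma$ at fibre-distance $\mc{O}(k^{-\eps})$ from $(x,\xi)$; a first-order Taylor estimate in the fibre variable, combined with $|\chi_k|\le 1$, gives $\|a\chi_k\|_{L^\infty(T^*\Sigma)}\le\|a\|_{L^\infty(S^*_c\Sigma)}+Ck^{-\eps}\|a\|_{C^1}$, the coefficient of $\|a\|_{L^\infty(S^*_c\Sigma)}$ being exactly $1$ as required. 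For the other norm, the algebra property of the $C^n$ norms and $\|\chi_k\|_{C^{14}}\le Ck^{14\eps}$ give $k^{-1/2}\|a\chi_k\|_{C^{14}}\le Ck^{-(1/2-14\eps)}\|a\|_{C^{14}}$. Collecting all contributions yields the asserted inequality.

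The main obstacle — and what pins down the exponents — is the competition between two regularizing scales. The cutoff $\chi_k$ must localize $u_k$ onto the critical shell finely enough (scale $k^{-\eps}$) that $\|a\|_{L^\infty(T^*\Sigma)}$ can be traded for $\|a\|_{L^\infty(S^*_c\Sigma)}$, yet each derivative of $\chi_k$ costs $k^{\eps}$, and these derivatives are fed into the $k^{-1/2}\|\cdot\|_{C^{14}}$ remainder of Calder\'on--Vaillancourt and the $k^{-1}\|\cdot\|_{C^{17}}$ remainder of the symbol product. Keeping the worst resulting exponent $k^{-(1/2-14\eps)}$ negative forces $\eps<1/28$, which is precisely the hypothesis. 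Beyond routine symbolic bookkeeping, the only point needing explicit (though brief) verification is that $\chi_k$, hence $a\chi_k$, sits in a $k$-independent compact subset of $T^*\Sigma$, so that the uniform-in-$k$ constants of \eqref{equation:calderon-fine} and \eqref{equation:product-fine} are genuinely available.
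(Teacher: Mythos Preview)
Your proof is correct and follows essentially the same approach as the paper: split off the off-shell piece via Lemma~\ref{lemma:1} with $\alpha=\eps$, pass $\chi_k$ into the symbol by the product estimate, apply Calder\'on--Vaillancourt to $\Op_k(a\chi_k)$, and finally use a first-order Taylor bound in the radial direction to replace $\|a\chi_k\|_{L^\infty}$ by $\|a\|_{L^\infty(S^*_c\Sigma)}+Ck^{-\eps}\|a\|_{C^1}$. Your explicit treatment of the degenerate case $\ell\le\eps$ (where the stated bound is vacuous) is a nice touch that the paper leaves implicit.
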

 
 
 \begin{proof}
 We first prove that
  \begin{equation}
  \label{equation:7}
 |\langle\Op_k(a)u_k,u_k\rangle_{L^2}| \leq \|a\|_{L^{\infty}(C_{k^{-\eps}})} + \mc{O}(k^{-\min(\ell-\eps, 1/2-14\eps)}\|a\|_{C^{17}(T^*\Sigma)}).
 \end{equation}
We set $\psi_k := \chi(k^\eps (p(x,\xi)-E_c))$ and start by decomposing:
 \[
 \begin{split}
 \langle \Op_k(a)u_k,u_k\rangle_{L^2} & = \underbrace{\langle \Op_k(a)\Op_k(\psi_k) u_k,u_k\rangle_{L^2}}_{\mathrm{(I)}} + \underbrace{\langle (1-\Op_k(\psi_k))u_k,\Op_k(a)^*u_k\rangle_{L^2}}_{\mathrm{(II)}}.
 \end{split}
 \]
 The second term $\mathrm{(II)}$ is bounded by:
 \[
 \begin{split}
|\langle (1-\Op_k(\psi_k))u_k,\Op_k(a)^*u_k\rangle_{L^2}| & \leq \|(1-\Op_k(\psi_k))u_k\|_{L^2} \|\Op_k(a)^*u_k\|_{L^2} \\
& \leq \|(1-\Op_k(\psi_k))u_k\|_{L^2} \|\Op_k(a)\|_{\mathcal{L}(L^2)} \\
& \leq C k^{-\min(\ell-\eps,1-18\eps)} (\|a\|_{L^\infty} + k^{-1/2}\|a\|_{C^{14}}) \\
& \leq Ck^{-\min(\ell-\eps,1-18\eps)}\|a\|_{C^{17}},
\end{split}
\]
where we used that $\|\Op_k(a)\|_{\mathcal{L}(L^2)} = \|\Op_k(a)^*\|_{\mathcal{L}(L^2)}$
in the second line, and Lemma \ref{lemma:1} together with
\eqref{equation:calderon-fine} in the third line.
The fourth line is a rougher estimate where the $\|a\|_{C^{17}(T^*\Sigma)}$ norm is involved as it will also appear in bounding $\mathrm{(I)}$.

As to the first term $\mathrm{(I)}$, we have:
\[
\begin{split}
|\langle \Op_k(a)\Op_k(\psi_k) u_k,u_k\rangle_{L^2}| &= |\langle \Op_k(a\psi_k)u_k,u_k\rangle_{L^2}| + \mc{O}(k^{-1}\|a\|_{C^{17}}\|\psi_k\|_{C^{17}}) \\
& \leq C(\|a\psi_k\|_{L^\infty} + k^{-1/2}\|a\psi_k\|_{C^{14}}) + \mc{O}(k^{-1+17\eps}\|a\|_{C^{17}}) \\
& \leq C \|a\psi_k\|_{L^\infty} + \mc{O}(\|a\|_{C^{14}}k^{-1/2+14\eps}+\|a\|_{C^{17}}k^{-1+17\eps}).
\end{split}
\]
Since $\eps < 1/28$, we find that $-1+17\eps < -1/2 + 14\eps < 0$ so
\[
\|a\|_{C^{14}(T^*\Sigma)}k^{-1/2+14\eps}+\|a\|_{C^{17}}k^{-1+17\eps} \leq C k^{-1/2+14\eps} \|a\|_{C^{17}}.
\]
Summing up $\mathrm{(I)}$ and $\mathrm{(II)}$, and using that $-1+18\eps < -1/2+14\eps < 0$, we find that
\[
\begin{split}
| \langle \Op_k(a)u_k,u_k\rangle_{L^2}| &\leq C \|a\psi_k\|_{L^\infty(T^*\Sigma)} + \mc{O}(k^{-\min(\ell-\eps, 1/2-14\eps)}\|a\|_{C^{17}(T^*\Sigma)}) \\
& \leq C \|a\|_{L^\infty(C_{k^{-\eps}})} + \mc{O}(k^{-\min(\ell-\eps, 1/2-14\eps)}\|a\|_{C^{17}(T^*\Sigma)}).
\end{split}
\]
This proves \eqref{equation:7}.

Now, observe that for any $\xi \in C_{k^{-\eps}}$, we can write
\[
\xi = \varphi_{t(\xi)}^{V_\perp}(\xi_c) = e^{t(\xi)}\xi_c,
\]
where $\xi_c \in S^*_c\Sigma$ lies on the critical energy shell, $|t(\xi)| \leq C k^{-\eps}$ and $(\varphi_t^{V_\perp})_{t \in \R}$ denotes the flow of $V_\perp$ on $T^*\Sigma$ (Euler vector field), see \eqref{equation:flot-vperp}. Hence:
\[
a(\xi) = a(\xi_c) + \int_0^{t(\xi)} V_\perp a(\varphi^{V_\perp}_t(\xi_c)) \dd t.
\]
This leads to
\[
\|a\|_{L^\infty(C_{k^{-\eps}})} \leq \|a\|_{L^\infty(S^*_c\Sigma)} + k^{-\eps}\|a\|_{C^1(T^*\Sigma)}.
\]
Inserting this in \eqref{equation:7} proves the claim.
 \end{proof}

%
%

We now prove Theorem \ref{theorem:horocyclic}

\begin{proof}[Proof of Theorem \ref{theorem:horocyclic}]
Let $a \in C^\infty_{\comp}(T^*\Sigma)$ with $\supp(a) \subset \{|\xi| \leq 10\}$. We assume that the quasimodes $(u_k)_{k \geq 0}$ satisfy \eqref{equation:to-use} with $0 \leq \ell \leq 1/15$.  We consider
\[
\eps := \ell/2, \qquad \alpha := \eps/20 = \ell/40.
\]
Then
\[
\min(\ell-\alpha,1-18\alpha) = \ell-\alpha = 39\ell/40, \qquad \min(\ell-\eps,1/2-14\eps)=\ell-\eps =\ell/2 < 39\ell/40. 
\]
In Lemmas \ref{lemma:1} and \ref{lemma:2}, the remainder term is therefore bounded respectively by $\mc{O}_{L^2}(k^{-\ell/2})$ and $\mc{O}(k^{-\ell/2}\|a\|_{C^{17}(T^*\Sigma)})$.

 We have:
\begin{equation}
\label{equation:la1}
\begin{split}
\langle \Op_k&(a)u_k,u_k\rangle_{L^2} \\
&= \langle \Op_k(a)(\Op_k(\chi_k) u_k + \mc{O}_{L^2}(k^{-\ell/2})),u_k\rangle_{L^2} \\
& \overset{\eqref{equation:calderon-fine}}{=} \langle \Op_k(a)\Op_k(\chi_k) u_k,u_k\rangle_{L^2} + \mc{O}\left(k^{-\ell/2}(\|a\|_{L^\infty} + k^{-1/2}\|a\|_{C^{14}})\right) \\
& \overset{\eqref{equation:product-fine}}{=} \langle \Op_k(a \chi_k) u_k,u_k\rangle_{L^2} + \mc{O}\left(k^{-1}\|a\|_{C^{17}}\|\chi_k\|_{C^{17}} + k^{-\ell/2}\|a\|_{L^\infty} + k^{-1/2-\ell/2}\|a\|_{C^{14}}\right) \\
& = \langle \Op_k(a \chi_k) u_k,u_k\rangle_{L^2} + \mc{O}\left(k^{-1+17\ell/40}\|a\|_{C^{17}} + k^{-\ell/2}\|a\|_{L^\infty} + k^{-1/2-\ell/2}\|a\|_{C^{14}}\right) \\
& = \langle \Op_k(a \chi_k) u_k,u_k\rangle_{L^2} + \mc{O}(k^{-\ell/2}\|a\|_{C^{17}}),
\end{split}
\end{equation}
where we used Lemma \ref{lemma:1} in the first line and $\|\chi_k\|_{C^{17}} \leq C k^{17\alpha} = Ck^{17\ell/40}$ in the fourth line.

Since $u_k$ are eigenmodes of $\Delta_k$, we may now write for $t \geq 0$:
\[
\begin{split}
\langle  \Op_k(a \chi_k) u_k,u_k\rangle_{L^2} &= \langle e^{itk^{-1}\Delta_k} \Op_k(a \chi_k) e^{-itk^{-1}\Delta_k}u_k,u_k\rangle_{L^2} \\
& \overset{\eqref{equation:egorov-fine}}{=} \langle \Op_k((a \chi_k) \circ \Phi_t) u_k, u_k \rangle_{L^2} + \mc{O}\left(k^{-1} \int_0^t \|(a \chi_k) \circ \Phi_s)\|_{C^{17}} \dd s\right).
\end{split}
\]
Applying \eqref{equation:key-bound} with $n=17$, and using that $\chi_k a$ is localized on the energy layer $E \leq E_c + k^{-\alpha} = E_c + k^{-\ell/40}$, we find that the remainder term in the previous equation is bounded by
\begin{equation}
\label{equation:dingo}
\leq C k^{-1} \int_0^t \langle s \rangle^{m_{17}} e^{17 \sqrt{2}k^{-\ell/80} s} \|a \chi_k\|_{C^{17}}  \dd s \leq C k^{-1+17\ell/40} \langle t\rangle^{m_{17}+1} e^{17\sqrt{2}k^{-\ell/80} t} \|a\|_{C^{17}}.
\end{equation}
One has $m_{17} = 3 \times 17+ 17\times 18/2 = 204$. Hence, we find that:
\begin{equation}
\label{equation:pfiou}
\langle  \Op_k(a \chi_k) u_k,u_k\rangle_{L^2} = \langle \Op_k((a \chi_k) \circ \Phi_t) u_k, u_k \rangle_{L^2} + \mc{O}(k^{-1+17\ell/40} \langle t\rangle^{205} e^{17\sqrt{2}k^{-\ell/80} t} \|a\|_{C^{17}}).
\end{equation}
For a function $f \in C^\infty(T^*\Sigma)$, define the ergodic average for $T \geq 0$:
\[
\langle f \rangle_T := \dfrac{1}{T} \int_0^T f \circ \Phi_t ~\dd t.
\]
Averaging \eqref{equation:pfiou}, we find that for $T \geq 0$:
\[
\langle  \Op_k(a \chi_k) u_k,u_k\rangle_{L^2} = \langle \Op_k(\langle a \chi_k\rangle_T) u_k, u_k \rangle_{L^2} + \mc{O}(k^{-1+17\ell/40} \langle T\rangle^{205} e^{17\sqrt{2}k^{-\ell/80} T} \|a\|_{C^{17}}).
\]
Write
\[
A := \int_{S^*_c\Sigma} a~ \dd \mu_{\mathrm{Liouv}}.
\]
We then obtain:
\[
\begin{split}
|\langle&  \Op_k(a \chi_k) u_k,u_k\rangle_{L^2} - A| \\
&= |\langle \Op_k\left(\langle a \chi_k- A\rangle_T\right) u_k, u_k \rangle_{L^2}|  + \mc{O}(k^{-1+17\ell/40} \langle T\rangle^{205} e^{17\sqrt{2}k^{-\ell/80} T} \|a\|_{C^{17}}) \\
& \lesssim \|\langle a \chi_k-A\rangle_T\|_{L^\infty(S^*_c\Sigma)} + k^{-\ell/2} \|\langle a \chi_k-A\rangle_T\|_{C^{1}} +  k^{-\ell/2}\|\langle a \chi_k-A\rangle_T\|_{C^{17}}\\
& \hspace{3cm}  + k^{-1+17\ell/40} \langle T\rangle^{205} e^{17\sqrt{2}k^{-\ell/80} T} \|a\|_{C^{17}} \\
& \lesssim  \|\langle a \chi_k-A\rangle_T\|_{L^\infty(S^*_c\Sigma)} +  k^{-\ell/2}\|\langle a \chi_k-A\rangle_T\|_{C^{17}} + k^{-1+17\ell/40} \langle T\rangle^{205} e^{17\sqrt{2}k^{-\ell/80} T} \|a\|_{C^{17}},
\end{split}
\]
where we have applied Lemma \ref{lemma:2} in the second inequality (here $\eps = \ell/2$ and $\ell-\eps=\ell/2$). Notice that $\chi_k \equiv 1$ on $C_{k^{-\ell/2}}$ by construction, and $\chi_k \circ \Phi_t = \chi_k$ as $(\Phi_t)_{t \in \R}$ preserves the energy layers $\{p=E\}$. As a consequence,
\[
\|\langle a \chi_k-A\rangle_T\|_{L^\infty(S^*_c\Sigma)} = \|\langle a-A\rangle_T\|_{L^\infty(S^*_c\Sigma)}.
\]
In addition, using the same bound as in \eqref{equation:dingo} (based on \eqref{equation:key-bound}), we may estimate:
\[
\|\langle a \chi_k-A\rangle_T\|_{C^{17}} \leq C k^{17\ell/40}\langle T\rangle^{205}e^{17\sqrt{2} k^{-\ell/80} T}\|a\|_{C^{17}}.
\]
(Notice that, technically, the function $\langle a \chi_k-A\rangle_T = \langle a\chi_k\rangle_T - A$ does not have shrinking support but $A$ is a constant, so it vanishes when computing higher order derivatives of this function and thus the support of the derivatives is shrinking, leading to the same bound.) This leads to:
%
%
%
%
%

\begin{equation}
\label{equation:la2}
|\langle  \Op_k(a \chi_k) u_k,u_k\rangle_{L^2} - A| \lesssim  \|\langle a-A\rangle_T\|_{L^\infty(S^*_c\Sigma)} +  k^{-3\ell/40} \langle T\rangle^{205} e^{17\sqrt{2}k^{-\ell/80} T} \|a\|_{C^{17}}.
\end{equation}

Finally, applying Theorem \ref{lemma:horocycle}, we obtain that
\begin{equation}
\label{equation:la3}
 \|\langle a-A\rangle_T\|_{L^\infty(S^*_c\Sigma)} \lesssim \|a\|_{H^{3}(S^*_c\Sigma)} T^{-\theta}.
 \end{equation}
Combining \eqref{equation:la1}, \eqref{equation:la2} and \eqref{equation:la3}, we find that for all $T \geq 0$:
\[
\langle \Op_k(a)u_k,u_k\rangle_{L^2} = \int_{S^*_c\Sigma} a \dd \mu + \mc{O}\left(k^{-3\ell/40} \langle T\rangle^{205} e^{17\sqrt{2}k^{-\ell/80} T} \|a\|_{C^{17}} + \|a\|_{H^{3}(S^*_c\Sigma)} T^{-\theta}\right).
\]
We then set
\[
\delta := \dfrac{3 \ell}{2 \cdot 40 \cdot 205} = \dfrac{\ell}{4100}.
\]
This guarantees that $205 \delta-3\ell/40 =-3\ell/80 < 0$. We apply the previous estimate with $T = k^{\delta}$. We then find that:
\[
\begin{split}
\langle \Op_k(a)u_k,u_k\rangle_{L^2} &= \int_{S^*_c\Sigma} a \dd \mu + \mc{O}(k^{-3\ell/80} \|a\|_{C^{17}(T^*\Sigma)} + k^{-\theta \ell/4100}\|a\|_{H^3(S^*_c\Sigma)}) \\
& = \int_{S^*_c\Sigma} a \dd \mu + \mc{O}(k^{-\theta \ell/4100}\|a\|_{C^{17}(T^*\Sigma)}),
\end{split}
\]
where we used that $\theta \ell/4100 < 3\ell/80$ as $\theta < 1/2$. This completes the proof.
\end{proof}

\bibliographystyle{alpha}
\bibliography{Biblio}

\end{document}